\begin{document}
\title{A mirror theorem for Gromov-Witten Theory without Convexity}
\author{Jun Wang}
\begin{abstract} We prove a genus zero Givental-style mirror theorem for all
  complete intersections in proper toric Deligne-Mumford stacks, which provides
  an explicit slice called big $I-$function on
  Givental's Lagrangian cone for such targets. In particular, we remove a
  technical assumption called convexity needed in previous quantum Lefschetz theorem. 
\end{abstract}

\maketitle
\tableofcontents
\section{Introduction}
In the past few decades, following predictions from string theory~\cite{Candelas_1991},
a series of results known as mirror theorems has been proven; an
incomplete list is~\cite{Givental_1996, Coates_2015,
coates07_quant_rieman_roch_lefsc_serre,
zinger08_reduc_genus_gromov_witten_invar, Givental_1998, Coates_2009,
Lian_1999, guo17_mirror_theor_genus_two_gromov}. These theorems reveal elegant patterns and deep structures encoded in
the collection of Gromov-Witten invariants of a given
symplectic manifold or orbifold $X$. However, the scope of these results, and much
of Gromov-Witten theory in general, is limited to the world of toric geometry;
in all cases above, $X$ is a complete intersection in a toric variety or
certain complete intersection\footnote{See the discussion of convexity below.} in a toric
stack~\cite{coates2019some}. The essential reason
for this is that one of most efficient way to compute Gromov-Witten invariants is to
utilize the technique of the localization theorem~\cite{Atiyah_1995, Graber_1999}, which
requires the targets to be carried with a good torus action (e.g. toric
variety/stack).

Smooth hypersurfaces (or complete intersections more general) in toric Deligne-Mumford stacks are the next class of
spaces to consider, but much less is known in this situation. The main difficulty
comes from that a hypersurface in a toric stack doesn't have any good torus action
in general. Hence one can't directly apply localization theorem to compute the
Gromov-Witten invariants of the toric hypersurface.
Alternatively, the usual way to compute the Gromov-Witten invariants of a given
hypersurface is to use \emph{quantum Lefschetz
principle}~\cite{kim03_funct_inter_theor_conjec_cox_katz_lee}, which relates the
twisted Gromov-Witten invariants of an ambient space $X$ to the Gromov-Witten
invariants of its hypersurface $Y$ which is the zero locus of a section of a given line bundle
$L$ on $X$. However, there is a technical assumption called \emph{convexity} for the line bundle $L$ to
apply the \emph{quantum Lefschetz principle}. The convexity says, for any stable
map $f:C\rightarrow X$, one has
$$H^{1}(C,f^{*}L)=0\ ,$$
which holds, for example, when the ambient space $X$ is a projective variety,
the source curve $C$ is of genus zero and $L$ is a positive line bundle. Even worse, a counterexample was found
in~\cite{Coates_2012} that \emph{quantum Lefschetz principle} can fail for
positive hypersurfaces in orbifolds. So there are limited methods to compute the genus zero
Gromov-Witten invariants of orbifold hypersurfaces where the convexity fails
(see~\cite{Guere:2019pjc} for a recent update for certain hypersurfaces in
weighted projective spaces), and a mirror theorem\footnote{In Givental's
  formalism, we need to construct an explicit slice on the Lagrangian cone.} for these targets is lacking
for a long time in the literature.

The aim of this paper is to prove a mirror theorem for all complete intersections
in proper toric Deligne-Mumford stacks, where the convexity is not required as
in previous such mirror theorem. Our proof of the mirror theorem relies heavily on quasimap theory, which actually corresponds to the quasimap wall-crossing
conjecture for big $\mathbb I-$function (c.f~\cite{ciocan-fontanine2016}).

\subsection{Main results and Ideas of proof}
\subsubsection{Big I-function} Let $X$ be a \emph{proper toric
  Deligne-Mumford stack} constructed by a GIT data $(W=\oplus_{\rho\in
  [n]}\mathbb C_{\rho},G=(\C^{*})^{k},\theta)$, and $Y\subset
X$ is a complete intersection with respect to a direct sum of line bundles
$\oplus_{b=1}^{c} L_{\tau_{b}}$ on $X$ (See \S \ref{sec:I-fun} for a more precise setting). The big $I-$function (or $I-$function in short) of the toric stack complete intersections can be written
down as follows, of which is obtained by quasimap theory~\cite{Ciocan_Fontanine_2014, Cheong_2015,ciocan-fontanine2016}: 
\begin{equation}\label{eq:mirror}
\begin{split}
\mathbb I(q,t,z)&=\mathrm{exp}\big(\frac{1}{z}\sum_{i=1}^{l}t_{i}u_{i}(c_{1}(L_{\pi_{j}})+\beta(L_{\pi_{j}})z)\big)\\
      &\sum_{\beta\in \mathrm{Eff}(W,G,\theta)}
      \frac{\prod_{\rho:\beta(L_{\rho})<
             0}\prod_{\beta(L_{\rho})< i<0}(D_{\rho}+(\beta(L_{\rho})-i)z)}{\prod_{\rho:\beta(L_{\rho})>
             0}\prod_{0\leq i<
             \beta(L_{\rho})}(D_{\rho}+(\beta(L_{\rho})-i)z)}\\
      &\cdot \frac{\prod_{b:\beta(L_{\tau_b})>0}\prod_{i:0\leq i<\beta(L_{\tau_b})}(c_{1}(L_{\tau_b})+(\beta(L_{\tau_b})-i)z)}{\prod_{b:\beta(L_{\tau_b})<0}\prod_{i:\beta(L_{\tau_b})<i<0}(c_{1}(L_{\tau_b})+(\beta(L_{\tau_b})-i)z)}
      i_{*}(s^{!}_{E_{\geq 0},loc}([Z^{ss}_{\beta}/(G/\langle {g^{-1}_{\beta}} \rangle)])) \ .
\end{split}
\end{equation}
We remark here $i_{*}(s^{!}_{E_{\geq 0},loc}([Z^{ss}_{\beta}/(G/\langle {g^{-1}_{\beta}}
\rangle)]))$ is an element in the twister sector
$H^{*}(\bar{I}_{g^{-1}_{\beta}}Y,\mathbb Q)$ (see Corollary \ref{cor:com-graph}).
$t=\sum_{i=1}^{l}t_{i}u_{i}(c_{1}(L_{\pi_{j}})$ is an
element in $H^{*}(Y,\mathbb Q)[t_{1},\cdots,t_{l}]$, where $\{\pi_{j},1\leq j\leq k\}$ are standard
representations of $G=(\C^{*})^{k}$. Moreover, $l$ can be any nonnegative integers and $u_{i}$ can be
any polynomial on first Chern class of line bundles $L_{\pi_{1}}, \cdots,
L_{\pi_{k}}$. Please see \S \ref{sec:I-fun} for more detail about the terminology appearing
in $\mathbb I(q,t,z)$. 

Now we state our main theorem, 
\begin{theorem}[Main Theorem]\label{thm:main}
$-z \mathbb I(q,t,-z)$ is a slice on Givental's Lagrangian cone of $Y$. More
explicitly, there exists a mirror transformation $\mu(q,t,z)$ such that we have the following identity:
  \begin{equation}\label{eq:wc} \mathbb I(q,t,z)=J(q,\mu(q,t,y),z),
  \end{equation}
  where $J(q,\mu(q,t,y),z)$ is defined by the $J-$function\footnote{Here we treat $J(q,\mathbf{t},z)$ as a
    functional, which means, after fixing the input $\mathbf{t}$, we think
    $J(q,\mathbf{t},z)$ as a formal series on the Novikov variable $q$ and the
    variable $z$, which doesn't involve the variable $y$.}
  \begin{equation*}
    \begin{split} J(q,\mathbf{t},z):=&\mathbbm{1}_{Y}+\frac{\mathbf{t}(z)}{z}\\
      &+\sum_{\beta\in \mathrm{Eff}(W,G,\theta)}\sum_{m\geq
        0}\frac{q^{\beta}}{m!}\phi^{\alpha}\langle
      \mathbf{t}(-\bar{\psi}_{1}),\cdots,\mathbf{t}(-\bar{\psi}_{m}),\frac{\phi_{\alpha}}{z(z-\bar{\psi}_{\star})}\rangle_{0,[m]\cup
        \star,\beta}\ .
    \end{split}
  \end{equation*}
  Here the input $\mathbf{t}$ is an element in $(q,t)H^{*}(\bar{I}_{\mu}Y,\mathbb
  Q)[y][[t_{1},\cdots,t_{l}]][[\mathbf{Eff}(W,G,\theta)]]$\footnote{It means that $\mathbf {t}$
    admits an expression as $\sum_{(\beta,i_{1},\cdots,i_{l})\neq
      0}q^{d}t_{1}^{i_{1}}\cdots t_{l}^{i_{l}}f_{d,i_{1},\cdots,i_{l}}$, where $f_{d,i_{1},\cdots,i_{l}}\in
    H^{*}(\bar{I}_{\mu}Y, \mathbb Q)[y]$. This choice of input $\mathbf{t}$
    gives a much less general definition of Givental's $J-$function in the usual
    literature, but it suffices for the need in this paper.}, and
  $\mathbf{t}(z)$ (resp. $\mathbf{t}(-\bar{\psi}_{i})$) means that we replace
  the variable $y$ in $\mathbf{t}$ by $z$ (resp. $-\bar{\psi}_{i}$). 
\end{theorem}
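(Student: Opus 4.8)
The plan is to realize the identity \eqref{eq:wc} as an instance of the quasimap wall-crossing comparison between the quasimap $\mathbb I$-function and the Gromov-Witten $J$-function for the complete intersection $Y$. Since $Y$ is cut out inside the proper toric Deligne-Mumford stack $X$ by a regular section of $\oplus_{b=1}^{c} L_{\tau_b}$, but convexity is not available, I would \emph{not} attempt to reduce to a twisted theory on $X$ via quantum Lefschetz; instead I would work directly with $\epsilon$-stable quasimaps to the GIT target presenting $Y$ (or, more precisely, with the master space / graph-space construction that interpolates between quasimaps and stable maps). The $\mathbb I$-function in \eqref{eq:mirror} is, by construction from quasimap theory, the restriction of the (big) $\mathbb I$-function to the $\epsilon = 0^+$ chamber, and the $J$-function sits in the $\epsilon = \infty$ chamber; the theorem asserts these lie on the same Lagrangian cone $\mathcal L_Y$ up to the change of variables $\mu$.

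The key steps, in order, would be: (1) set up the $\epsilon$-quasimap graph spaces $QG^{\epsilon}_{0,k,\beta}$ for the GIT data presenting $Y$ together with the $\C^*$-action on the $\mathbb P^1$-component of the domain, and establish the virtual localization formula there — this is where the explicit hypergeometric factors in \eqref{eq:mirror}, namely the products over $\rho$ with $\beta(L_\rho)\gtrless 0$ and over $b$ with $\beta(L_{\tau_b})\gtrless 0$, together with the class $i_{*}(s^{!}_{E_{\geq 0},loc}([Z^{ss}_\beta/(G/\langle g^{-1}_\beta\rangle)]))$, get identified as the contribution of the fixed locus where all "degree" concentrates at $0\in\mathbb P^1$; Corollary \ref{cor:com-graph} (cited in the excerpt) supplies exactly this identification of the localization residue with an element of $H^*(\bar I_{g^{-1}_\beta}Y,\mathbb Q)$. (2) Analyze the opposite fixed locus (all degree at $\infty$), whose contribution assembles into the $J$-function of $Y$ evaluated on an as-yet-unknown input; the remaining fixed loci, with domain components of positive degree in the interior, produce the "correction" terms. (3) Use the fact that the total localization sum is a \emph{fixed} cohomology class independent of the torus parameter to equate the two sides as functions of $z$, and then invoke Givental's characterization of the Lagrangian cone (a family lies on $\mathcal L_Y$ iff its derivatives generate an isotropic subspace with the cone's defining recursion) to conclude that $-z\mathbb I(q,t,-z)$ lies on $\mathcal L_Y$; the mirror map $\mu(q,t,y)$ is then read off as the part of $\mathbb I$ of the appropriate degree in $z$, so that subtracting $\mathbbm 1_Y + \mu(z)/z$ leaves a series supported in $z^{-1}H^*(\bar I_\mu Y)[z^{-1}]$, which is precisely the condition placing it in $J(q,\mu,z)$.

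The main obstacle is step (1) in the absence of convexity: when $H^1(C,f^*L_{\tau_b})\neq 0$, the natural "twisting" Euler class does not exist as a genuine class, and one must instead work with the localized Gysin map $s^!_{E_{\geq 0},loc}$ associated to the two-term complex $E = (E_0 \to E_1)$ whose cohomology computes $H^0 \ominus H^1$ of the pulled-back bundles over the quasimap space — this is what forces the appearance of $Z^{ss}_\beta$ and the stabilizer $\langle g^{-1}_\beta\rangle$, and one must check that the localized Gysin pushforward is well-defined (the section $s$ is nowhere-vanishing on the relevant locus, cf. the Cox construction of $Y$ inside the ambient stack) and compatible with virtual pullback, so that the localization residues genuinely land in the twisted/inertia cohomology of $Y$ rather than of $X$. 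Establishing this compatibility — essentially a Quantum Lefschetz–type statement at the level of quasimap graph spaces, proved via the master space rather than convexity — is the technical heart; once it is in place, steps (2)–(3) are the standard Givental/Ciocan-Fontanine–Kim wall-crossing bookkeeping, with the extra nonnegative-integer parameters $l$ and the polynomials $u_i$ in \eqref{eq:mirror} entering only through the elementary exponential prefactor $\exp\big(\frac1z\sum_i t_i u_i(\dots)\big)$, which contributes to $\mu$ linearly and poses no new difficulty.
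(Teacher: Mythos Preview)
Your proposal has the right ingredients at the level of slogans --- graph-space localization, the localized Gysin class $s^!_{E_{\geq 0},loc}$ to handle non-convexity --- but step (3) contains a genuine gap. A single quasimap graph-space localization does \emph{not} put $\mathbb I$ on one side and the Gromov--Witten $J$-function on the other so that ``independence of the torus parameter'' lets you equate them; the fixed loci of the $\epsilon=0^+$ graph space over $\infty$ still carry quasimap-type invariants, not stable-map ones. The ``standard Givental/Ciocan-Fontanine--Kim bookkeeping'' you invoke actually relies on either convexity or a good torus action on the target (to run a Birkhoff-factorization / uniqueness-of-recursion argument), and $Y$ has neither --- this is precisely the difficulty the paper is addressing, and sweeping it into ``bookkeeping'' is where your outline fails.

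What the paper does instead is build \emph{two} master spaces (root-stack modifications of $\mathbb P^1$-bundles over $Y$: one carrying $(\theta,\boldsymbol{\varepsilon})$-stable quasimaps, the other carrying ordinary stable maps to a double root stack $\mathbb P Y_{r,s}$), and on each it forms an auxiliary cycle, applies $\mathbb C^*$-localization, and extracts the coefficient of $\lambda^{-1}$. Polynomiality of the auxiliary cycle forces this coefficient to vanish, yielding two recursion relations (Theorems \ref{thm:rec1} and \ref{thm:rec2}): the first expresses $[z\mathbb I_{\beta,p}]_{z^{-c-1}}$ as a sum over lower-degree edge data, and the second expresses the Gromov--Witten side $\sum \frac{1}{m!}\langle \mu_{\beta_1,p_1},\dots,\mu_{\beta_m,p_m},\phi_\alpha\bar\psi_\star^c\rangle$ as a structurally identical sum but with functions $f_{\beta_i,p_i}$ in place of $z\mathbb I_{\beta_i,p_i}$. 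An induction on $\beta(L_\theta)+p$ shows $f_{\beta_i,p_i}=z\mathbb I_{\beta_i,p_i}$ for the lower-degree terms appearing, so the two recursions match and \eqref{eq:coewc3} follows. The localized Gysin class you correctly identified enters in the edge-moduli virtual class computation for the first master space (Lemma \ref{lem:edgevir2}), but it is one technical ingredient, not the mechanism that replaces the missing torus action; that mechanism is the matching of two separate $\lambda^{-1}$-vanishing recursions.
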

Note here any degree $\beta\in
\mathrm{Eff}(W,G,\theta)$ of $X$ (c.f.\ref{def:novi-degree}), we will denote the Gromov-Witten invariant
$$\phi^{\alpha}\langle
\mathbf{t}(-\bar{\psi}_{1}),\cdots,\mathbf{t}(-\bar{\psi}_{m}),\frac{\phi_{\alpha}}{z(z-\bar{\psi}_{\star})}\rangle_{0,[m]\cup
  \star,\beta}$$ to be
  $$\sum_{\substack{d\in \mathrm{Eff}(AY,G,\theta)\\ i_{*}(d)=\beta}}\phi^{\alpha}\langle
  \mathbf{t}(-\bar{\psi}_{1}),\cdots,\mathbf{t}(-\bar{\psi}_{m}),\frac{\phi_{\alpha}}{z(z-\bar{\psi}_{\star})}\rangle_{0,[m]\cup
    \star,d}\ ,$$ 
where $\mathrm{Eff}(AY,G,\theta)$ is semigroup of the degree of $Y$.

Now let's define precisely $\mu(q,t,z)$ appearing in our main theorem, Write $z \mathbb I(q,t,z)$ as a formal
Laurent series in variable $z$:
$$\cdots+\mathbb I_{-1}(q,t)z^{2}+\mathbb I_{0}(q,t)z+\mathbb I_{1}(q,t)+\mathcal O(z^{-1}) ,$$
and define $\mu(q,t,z)$ to be the truncation in nonnegative $z$ powers:
$$\mu(q,t,z):=[z \mathbb  I(q,t,z)-\mathbbm
1_{Y}z]_{+}=\cdots+\mathbb I_{-1}(q,t)z^{2}+(\mathbb I_{0}(q,t)-\mathbbm
1_{Y})z+\mathbb I_{1}(q,t) \ .$$ By
the definition of $\mathbb I(q,t,z)$, $z \mathbb
I(q,t,z)$ admits an asymptotic expansion in $q,t$:
$$z \mathbb I(q,t,z)=z\mathbbm{1}_{Y}+\mathcal O(q)+\mathcal O(t), $$
which implies that $\mu(q,t,z)=\mathcal O(q)+\mathcal
O(t)$.

\subsubsection{Sketch of the proof of the main theorem} Now
Before sketching the proof of the main theorem, let's analyze $\mu(q,t,z)$ in
more detail. Let $\mathbb I(q,z):=\mathbb I(q,0,z)$, we can expand $\mathbb
I(q,z)$ as
$$\mathbb I(q,z)=\sum_{\beta\in \mathrm{Eff}(W,G,\theta)}q^{\beta}\mathbb
I_{\beta}(z)\ ,$$ where $\mathbb I_{\beta}(z)\in H^{*}(\bar{I}_{\mu}Y,\mathbb
Q)[z,z^{-1}]$. Then we can decompose $\mathbb I(q,t,z)$ as a formal sum
$$\mathbb I(q,t,z)=\sum_{\beta\in \mathrm{Eff}(W,G,\theta),p\in
  \mathbb N}q^{\beta}\frac{\boldsymbol{t}^{p}}{p!z^{p}}\mathbb I_{\beta}(z)\ .$$
where
$\boldsymbol{t}=\sum_{i=1}^{l}t_{i}u_{i}(c_{1}(L_{\pi_j})+\beta(L_{\pi_j})z)$.
For nonzero pair $(\beta,p)$, set $\mu_{\beta,p}:=[\frac{t^{p}z\mathbb
  I_{\beta}(z)}{p!z^{p}}]_{+}$ as the truncation in nonnegative $z$ powers. We
note that $\mu_{\beta,p}(z)$ is a polynomial in $H^{*}(\bar{I}_{\mu}Y,\mathbb
Q)[t_{0},\cdots,t_{l},z]$ of homogeneous degree $p$ in variables
$t_{1},\cdots,t_{l}$. For convenience we set $\mu_{0}(z)=\mu_{(0,0)}(z)=0$. Then
we have write $\mu(q,t,z)$ as a sum
$$\mu(q,t,z)=\sum_{\substack{p\in \mathbb Z_{\geq 0},\; \beta\in
    \mathrm{Eff}(W,G,\theta)\\(\beta,p)\neq 0}}q^{\beta}\mu_{\beta,p}(z)\ ,$$

Multiply by $z$ on both sides of \eqref{eq:wc}, we observe that, to prove the
main theorem, it suffices to prove that, for arbitrary $(\beta,p)\in
\mathrm{Eff}(W,G,\beta)\times \mathbb N$, we have:
\begin{equation}\label{eq:coewc}
  \begin{split}
    z\frac{\boldsymbol{t}^p}{p!z^{p}}&\mathbb I_{\beta}(z)=\delta_{(\beta,p),0}z+\mu_{\beta,p}(z)\\
    &+\sum_{m=0}^{\infty}\sum_{\substack{\beta_{0}+\beta_{1}+\cdots+\beta_{m}=\beta\\
        p_{1}+\cdots+p_{m}=p}}\frac{1}{m!}\phi^{\alpha}\langle\mu_{\beta_{1},p_{1}}(-\bar{\psi}_{1}),\cdots,\mu_{\beta_{m},p_{m}}(-\bar{\psi}_{m}),\frac{\phi_{\alpha}}{z-\bar{\psi}_{\star}}
    \rangle_{0,[m]\cup \star,\beta_{0}}\ .
  \end{split}
\end{equation}
To prove \eqref{eq:wc}, it suffices to prove \eqref{eq:coewc} for every
$(\beta,p)\in \operatorname{Eff(W,G,\theta)} \!\times\! \mathbb N$. Observe that
when $(\beta,p)=0$, \eqref{eq:coewc} reduces to
$$z=z\ ,$$
which holds obviously. This will be the base case for our inductive proof of
\eqref{eq:coewc}.

To prove \eqref{eq:coewc}, observe that the nonnegative parts in $z$ on both
sides of \eqref{eq:coewc} are equal to $\mu_{\beta,p}(z)$ by the very definition
of $\mu_{\beta,p}(z)$. It follows that, in order to prove \eqref{eq:coewc}, it
suffices to show the truncations in negative $z$ powers of \eqref{eq:coewc}
\begin{equation}\label{eq:coewc2}
  \begin{split}
    [z&\frac{\boldsymbol{t}^p}{p!z^{p}}\mathbb I_{\beta}(z)]_{-}:=\\
    &\sum_{m=0}^{\infty}\sum_{\substack{\beta_{0}+\beta_{1}+\cdots+\beta_{m}=\beta\\p_{1}+\cdots+p_{m}=p}}\frac{1}{m!}\phi^{\alpha}\langle\mu_{\beta_{1},p_{1}}(-\bar{\psi}_{1}),\cdots,\mu_{\beta_{m},p_{m}}(-\bar{\psi}_{m}),\frac{\phi_{\alpha}}{z-\bar{\psi}_{\star}}
    \rangle_{0,[m]\cup \star,\beta_{0}}
  \end{split}
\end{equation} holds. Equivalently, it suffices to show, for any nonnegative
integer $c$, one has
\begin{equation}\label{eq:coewc3}
  \begin{split}
    &[z\frac{\boldsymbol{t}^p}{p!z^{p}}\mathbb I_{\beta}(z)]_{z^{-c-1}}:=\\
    &\sum_{m=0}^{\infty}\sum_{\substack{\beta_{0}+\beta_{1}+\cdots+\beta_{m}=\beta\\p_{1}+\cdots+p_{m}=p}}\frac{1}{m!}\phi^{\alpha}\langle\mu_{\beta_{1},p_{1}}(-\bar{\psi}_{1}),\cdots,\mu_{\beta_{m},p_{m}}(-\bar{\psi}_{m}),\phi_{\alpha}\bar{\psi}_{\star}^{c}
    \rangle_{0,[m]\cup \star,\beta_{0}} \ .
  \end{split}
\end{equation}

The idea to prove \eqref{eq:coewc3} is to show that both sides of
\eqref{eq:coewc3} satisfy the same recursive relations (see Theorem
\ref{thm:rec1} and Theorem \ref{thm:rec2}) by induction on
$\beta(L_{\theta})+p$. This is done by considering two master spaces (see
\S\ref{subsec:Construction of the master space} and \S\ref{subsec:Construction
  of the master space at infinity sector}), which are root stack modification of
the twisted graph spaces found
in~\cite{clader17_higher_genus_quasim_wall_cross_via_local,
  clader17_higher_genus_wall_cross_gauged}. Then we apply virtual localization
to express two auxiliary cycles (see \eqref{eq:mainintegral1} and
\eqref{eq:mainintegral2}) corresponding to two master spaces in graph sums and
extract $\lambda^{-1}$ coefficients ($\lambda$ is an equivariant parameter).
Finally, the polynomiality of the two auxiliary cycles implies that the
coefficients for $\lambda^{-1}$ term must vanish, thus they yields recursive
relations (see also Theorem \ref{thm:rec1} and Theorem \ref{thm:rec2}) to finish
the proof of the quasimap wall-crossing.


\begin{remark}
  Previously, the big $\mathbb I-$function quasimap wall-crossing conjecture was
  proved for GIT targets with good torus action~\cite{ciocan-fontanine2016},
  i.e. zero dimensional torus fixed orbits are finite and the one dimensional
  torus fixed orbits are isolated. Thus the targets we treat here provides first
  examples with no good torus action.
  
  The proof of the mirror theorem here is quite robust; the main geometrical
  construction including twisted graph space and root stack construction, and
  recursive relations can be directly generalized to all proper GIT targets
  considered in quasimap theory. Hence we expect the method developed here can
  be used to prove the genus zero quasimap wall-crossing conjecture for all
  proper GIT targets considered in quasimap theory.

  The main geometrical input in the proof of wall-crossing here is inspired by
  the twisted graph space used in~\cite{clader17_higher_genus_wall_cross_gauged,
    clader17_higher_genus_quasim_wall_cross_via_local}, where they use the genus
  zero quasimap wall-crossing for the small $I-$function as input to prove the
  high genus quasimap wall-crossing. So it may be surprising that certain
  modification of the twisted graph space can be used to prove the genus zero
  $\mathbb I-$function quasimap wall-crossing directly.
\end{remark}

\begin{remark}
  Since the appearance of the first version of this paper, it's realized by the
  author that the proof of the mirror theorem doesn't need the part of the
  computation of the $I-$function. To keep this paper short, we omit the
  part about the computation of the $I-$function here. Readers who are interested in
  learning how these $I-$functions come from are encouraged to look at the first
  version of this paper on arXiv (it's done only for positive hypersurfaces but can
  be extended to all complete intersections) or to refer to the forthcoming work by Nawaz
  Sultani and Rachel Webb, in which they obtain small $I$-functions for all
  complete intersections in GIT quotients with possible non-abelian group
  action, and their method works directly with quasimap graph space and avoids
  using $p$-fields appearing in the author's first version.

  During the preparation of this work, the author learns that Yang Zhou has used
  a totally different method to prove the quasimap wall-crossing conjecture for
  all GIT quotients and all genera~\cite{zhou19_quasim_wall_cross_git_quotien},
  which implies the mirror theorem proved in this paper without exponential
  factor. The author also learns that Felix Janda and Nawaz Sultani have a
  different way of computing the (S-extended) $I-$functions for some
  hypersurfaces in weighted projective spaces and use them to calculate
  Gromov-Witten invariants.
\end{remark}

\subsection{Outline}
The rest of this paper is organized as follows. In \S\ref{sec:quasimapwithfield}, we will recall
the quasimap theory, the author wants to
draw readers' attention to the language of $\theta'$-stable quasimaps (see Remark
\ref{rmk:theta-quasimap}), where $\theta'$ can be a \emph{rational character}, because it is more suitable
than the language of $\epsilon$-stable quasimaps for the later construction of the master space
in \S\ref{sec:twistedgraphspace}. In \S\ref{sec:I-fun}, we collect some
important facts about (rigidified) inertia stack of toric stack complete
intersections, and compare them with rigidified inertia stack of toric stacks.
Some special cycles in the inertia stack will be discussed as they will be
appeared in our $\mathbb I-$function. In \S\ref{sec:twistedgraphspace} and \S\ref{sec:masterspace-inf}, we will construct two master spaces which
carry $\C^{*}$-actions, a very explicit $\C^{*}$-localization computation which
is based on localization computations~\cite{clader17_higher_genus_quasim_wall_cross_via_local, Janda2017} will be
presented, this part is technical, we encourage the reader to skip to go to \S\ref{sec:rec-rel} first and to refer back when needed. In \S\ref{sec:rec-rel}, we will calculate two auxiliary cycles corresponding to
the two master spaces via localization, they provide recursive relations
to prove the genus zero quasimap wall-crossing conjecture for toric
stack hypersurfaces. 

\medskip
$\textbf{Notations}$: In this paper, we will always assume that all
algebraic stacks and algebraic schemes are locally of finite type over the base
field $\C$. Given a GIT target $(W,G,\theta)$, we will use symbols $\mathfrak X,
\mathfrak Y$... to mean the quotient stack $[W/G]$, symbols $X,Y$... to mean the
corresponding GIT stack quotient $[W^{ss}(\theta)/ G]$, $I_{\mu}X$,
$I_{\mu}Y$... to mean the corresponding (cyclotomic) inertia stacks, and $\bar{I}_{\mu}X$,
$\bar{I}_{\mu}Y$...to mean the corresponding rigidified inertia stacks.

We will use the following construction a lot throughout this paper. 
\begin{definition}[\textbf{Borel Construction}]\label{def:bor}
Let $G$ be a linear algebraic group and $W$ be a
variety. Fix a right $G$-action on the variety $W$. For any character $\rho$ of
$G$, we will denote $L_{\rho}$ to be the line bundle on the quotient stack
$[W/G]$ defined by
$$W \!\times\!_{G}\;\mathbb C_{\rho}:=[(W \!\times\! \C_{\rho})/G]\ ,$$
where $\C_{\rho}$ is the $1$-dimensional representation of $G$ via $\rho$ and
the action is given by
$$(x,u)\cdot g=(x\cdot g,\rho(g)u)\in W \!\times\! \mathbb C_{\rho}$$
for all $(x,u)\in W \!\times\! \mathbb C_{\rho}$ and $g\in G$. For any linear
algebraic group $T$, if we have a left $T$-action on $W$ which commutes with the
right action of $G$, we will lift the line
bundle $L_{\rho}$ defined above to be a $T$-equivariant line bundle, which is induced from the (left)
$T$ action on $W \!\times\! \C_{\rho}$ in the way that $T$ acts on $\C_{\rho}$
trivially. By abusing notations, we will use the same notation $L_{\rho}$ to mean
the corresponding invertible sheaf (or $T-$equivariant invertible sheaf)
over $[W/G]$ unless stated otherwise.
\end{definition}

Acknowledgments: First, I want to express my deep gratitude to my advisor Hsian-Hua Tseng for his support and guidance. I want to especially thank his excellent talk on relative Gromov-Witten
invariants and orbifold Gromov-Witten invariants at OSU, which draws my attention to double ramification cycles, which eventually leads to the birth of this paper.
I would like to thank the organizers of the 2017 FRG workshop ``Crossing the Walls in
Enumerative Geometry'' at Columbia, which exposed me to the most recent process
in the subject as well as many inspiring ideas. Part of the paper is based on
the author's thesis at OSU. I also want to thank Yang Zhou and Cristina Manolache
on discussions on quasimap theory and thank Honglu Fan and Fenglong You on
discussions on Givental's formalism. I want to thank Felix Janda on the presentation of the big $I-$function.

\section{Background on quasimaps}\label{sec:quasimapwithfield}
We first recall the definition of a \emph{quasimap} to a GIT target, our main reference
is~\cite{Ciocan_Fontanine_2014, Cheong_2015, ciocan-fontanine2016}. By a GIT target, we mean a triple
$(W,G,\theta)$, where $W$ is an irreducible affine variety with locally complete
intersection (l.c.i) singularity, $G$ is a reductive group equipped with a right
$G-$action on $W$ and $\theta$ is an (integral) character of $G$. Denote by
$\mathfrak X:=[W/G]$ the quotient stack. Denote by $W^{ss}$ (or
$W^{ss}(\theta)$) the semistable locus in $W$, and by $W^{s}$ (or
$W^{s}(\theta)$) the stable locus. Throughout out this paper, for a GIT target
$(W,G,\theta)$, we will always assume that $W^{ss}(\theta)=W^{s}(\theta)$ and
the \emph{GIT stack quotient}
$$X:=[W^{ss}(\theta)/G]$$
is a smooth \emph{Deligne-Mumford stack}, under which condition, $X$ is always
semi-projective, i.e. it's proper over the affine GIT quotient
$\mathrm{Spec}(\mathbb C[W]^{G})$ by the proj-construction of GIT
quotient~\cite[\S 2.2]{Cheong_2015}\cite{MR1304906}:
$$\underline{X}=\mathbf{Proj}\oplus_{n=0}^{\infty}\Gamma(W,W \!\times\! \mathbb C_{n\theta})^{G}.$$

Let $\mathbf{e}$ be the least common multiple of the exponents $|\mathrm{Aut}
(\bar{x})|$ of automorphism groups $\mathrm{Aut} (\bar{x})$ of all geometric
points $\bar{x}\rightarrow X$ of $X$. Then, for any character $\rho$ of $G$, the
line bundle $L_{\rho}^{\otimes \mathbf{e}}$ is the pullback of a line bundle
from the coarse moduli $\underline{X}$ of $X$, here the line bundle $L_{\rho}$
is defined by the Borel (mixed) construction \ref{def:bor}.


\begin{definition}\label{quasimapwithfield}
  Given a scheme $S$ over $Spec(\mathbb C)$, $f=((C,q_{1},\cdots,q_{m}),P,x)$ is
  called a $\textit{quasimap}$ over $S$ (alternatively \emph{$\theta$-quasimap}
  over $S$) of class $(g,m,\beta)$ if it consists of the following data:
  \begin{enumerate}
  \item $(C,q_{1},\cdots,q_{m})$ is a flat family of genus $g$ twist curves over
    $S$~\cite[\S 4]{Dan_Abramovich_2008}, and $m$ gerbe marked sections
    $q_{1},\cdots, q_{m}$ over $S$, here we don't require the gerbe sections to
    be trivialized;
  \item $P$ is a principal $G$-bundle on $C$;
  \item $x$ is a section of the affine $W-$bundle $(P \!\times\! W)/G$ over $C$
    so that it determines a representable morphism $[x]:C \rightarrow \mathfrak
    X=[W/G]$ as the composition
     $$\xymatrix{
       C\ar[r]^-{x}&(P\times W)/G\ar[r]&[W/G]\ . }$$ We say that the quasimap
     $f$ is of degree $\beta\in Hom_{\mathbb Z}(Pic(\mathfrak X),\mathbb Q)$ if
     $\beta(L)=deg([x]^{*}L)$ for every line bundle $L\in Pic(\mathfrak X)$;
   \item The base locus of $[x]$ defined by $[x]^{-1}(\mathfrak X\backslash X)$
     is purely of relative dimension zero over $S$.
   \end{enumerate}
 \end{definition}

 Sometimes we may also use the notation $f:(C,\boldsymbol{q}=(q_{i}))\rightarrow
 \mathfrak X$ to mean a quasimap (or
 $\theta$-quasimap). A quasimap $f$ is \emph{prestable} (or
 \emph{$\theta-$prestable}) if the base locus are away from nodes and markings.

\begin{remark}
  We can extend the definition of $\theta$-prestable quasimap to allow any
  \emph{rational character $\theta'$} such that $\theta'$-prestable quasimap is
  same as $\alpha\theta'$-prestable quasimap for any $\alpha\in \mathbb Q_{>0}$.
\end{remark}

Consider a prestable quasimap $f$, since the base point is away from nodes and
marking points, for each $q\in C$, as in \cite[Definition
7.1.1]{Ciocan_Fontanine_2014}, we define the length function $l_{\theta}(q)$ as
follows:
\begin{equation}\label{eq:quasi-stablity}
  l_{\theta}(q)=min\{\frac{([x]^{*}s)_{q}}{n}|\; s\in \Gamma(W,W \!\times\! \mathbb C_{n\theta})^{G},\,[x]^{*}s\neq
  0,\,n\in \Z_{>0}\}\ ,
\end{equation}
where $([x]^{*}s)_{q}$ is the coefficient of the divisor $([x]^{*}s)$ at $q$.
Note here the length function $l_{\theta}$ depends on the \emph{integral}
character $\theta$. We have the following important observation about the length
function $l_{\theta}$: choose $\alpha\in \mathbb Q_{> 0}$ such that
$\theta'=\frac{1}{\alpha}\theta$ is another integral character. Then
$$l_{\theta}=\alpha l_{\theta'}\ ,$$
then the length function $l_{\theta}$ can be defined for any \emph{rational
  character} $\theta'$, i.e. choose $\alpha\in \mathbb Q_{>0}$ and an integral
character $\theta$ such that $\theta'=\alpha \theta$, then we define
$$l_{\theta'}:=\alpha l_{\theta}$$
as in~\cite[Definition 2.4]{ciocan-fontanine2016}, note the definition of
$l_{\theta'}$ is independent of decomposition of $\theta'$ as a product of
positive rational number $\alpha$ and an integral character $\theta$ by the
above observation.

Fix a positive rational number $\epsilon\in \mathbb Q_{>0}$. Given a prestable
quasimap $f$ over $Spec(\mathbb C)$, we say $f$ is a \emph{$\epsilon$-stable}
quasimap to $X$ if $f$ satisfies the following stability condition:
\begin{enumerate}
\item the $\mathbb Q-$line bundle
  $(\phi_{*}([x]^{*}L_{\mathbf{e}\theta}))^{\frac{\epsilon}{\mathbf{e}}}\otimes\omega_{\underline{C}}^{log}$
  on the coarse moduli curve $\underline{C}$ of $C$ is ample. Here
  $\phi:C\rightarrow \underline{C}$ is the coarse moduli map. Note the line
  bundle $[x]^{*}L_{\mathbf{e}\theta}$ on $C$ a pullback of a line bundle on the
  coarse curve $\underline{C}$ by the choice of $\mathbf{e}$ and the prestable
  condition. Here
  $\omega_{\underline{C}}^{log}=\omega_{\underline{C}}(\sum_{i=1}^{m}
  \underline{q_{i}})$ is the log dualizing invertible sheaf of the coarse moduli
  $\underline{C}$;
\item $\epsilon l_{\theta}(q)\leq 1$ for any $q\in C$.
\end{enumerate}

\begin{remark}[\emph{$\theta'$-quasimap}]\label{rmk:theta-quasimap}
  Using the above generalization of length function $l_{\theta'}$ for a rational
  character $\theta'$, we can give the definition of \emph{$\theta'$-stable
    quasimap}: given a $\theta'$-prestable quasimap
  $f=((C,q_{1},\cdots,q_{m}),[x])$, we say $f$ is a \emph{$\theta'$-stable}
  quasimap to $\mathfrak X$ if
  \begin{enumerate}
  \item the $\mathbb Q-$line bundle
    $(\phi_{*}([x]^{*}L_{\mathbf{b}\mathbf{e}\theta'}))^{\frac{1}{\mathbf{b}\mathbf{e}}}\otimes\omega_{\underline{C}}^{log}$
    on the coarse moduli curve $\underline{C}$ of $C$ is ample. Here
    $\phi:C\rightarrow \underline{C}$ is the coarse moduli map, and $\mathbf{b}$
    is a positive integer which makes $\mathbf{b}\theta'$ an integral character.
    Note the ampleness is dependent of choice of the positive integer
    $\mathbf{b}$.
  \item $l_{\theta'}(q)\leq 1$ for any $q\in C$.
  \end{enumerate}
  
  Given a GIT target $(W,G,\theta)$, following \cite[Propsition
  2.7]{ciocan-fontanine2016}, an essentially equivalent definition about
  $\epsilon$-stable quasimaps to $X$ is, but from a different point of view, the
  concept of a \emph{$\epsilon\theta-$stable quasimap} to $\mathfrak X$. The
  concept of $\theta'$-stable quasimap will play an important role in the
  construction of master space in section \ref{sec:twistedgraphspace}. For a
  \emph{rational character $\theta'$ of $G$}, we will use the notation
  $Q^{\theta'}_{g,m}(\mathfrak X,\beta)$ to mean the moduli stack of
  $\theta'$-stable quasimaps to the quotient stack $\mathfrak X$ of class
  $(g,m,\beta)$. If we choose $\theta'=\epsilon\theta$, then the space $Q^{\theta'}_{g,m}(\mathfrak X,\beta)$ is same as the space
  $Q^{\epsilon}_{0,m}([W^{ss}(\theta)/G],\beta)$ of $\epsilon$-stable quasimaps
  we introduced before.  
\end{remark}

We call a prestable quasimap $f$ over a scheme $S$ is \emph{$\epsilon$-stable}
if for every $\mathbb C$-point $s$ of $S$, the restriction of $f$ over $s$ is
$\epsilon$-stable. We call $f$ is \emph{$0+$stable} if $f$ is $\epsilon-$stable
for every positive rational number $\epsilon\in \mathbb Q_{>0}$.

\begin{definition}\label{def:novi-degree}
  A group homomorphism $\beta \in \Hom _{\mathbb Z}(\Pic \mathfrak X , \mathbb Q
  )$ is called {\em $L_{\theta}$-effective} if it is realized as a finite sum of
  classes of some quasimaps to $X$. Such elements form a semigroup with identity
  $0$, denoted by $\mathrm{Eff}(W, G, \theta )$.
\end{definition}
We will need the following lemma proved in~\cite[Lemma 2.3]{Cheong_2015}.
\begin{lemma}\label{lem:2.3}
  If $((C, \boldsymbol{q}), [x])$ is a quasimap of degree $\beta$, then
  $\beta(L_{\theta})\geq 0$. Moreover, $\beta(L_{\theta})=0$ if and only if
  $\beta =0$, if and only if the quasimap is constant (i.e., $[x]$ is a map into
  $X$, factored through an inclusion $\mathbb B\Gamma \subset X$ of the
  classifying groupoid $\mathbb B\Gamma$ of a finite group $\Gamma$).
\end{lemma}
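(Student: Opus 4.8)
The plan is to derive everything from two standard GIT facts together with a connectedness/rigidity argument, organized as a cycle of implications $(\text{constant})\Rightarrow(\beta=0)\Rightarrow(\beta(L_\theta)=0)\Rightarrow(\text{constant})$, with only the last arrow carrying real content; the inequality $\beta(L_\theta)\ge 0$ falls out of the same computation. The two inputs I need are: (i) by the Borel construction $\Gamma(\mathfrak X,L_{n\theta})=\Gamma(W,W\times\mathbb C_{n\theta})^G$ is the space of $n\theta$-semiinvariants, and a geometric point $\bar x\to W$ is semistable precisely when some such semiinvariant (with $n>0$) is nonzero at it; (ii) $L_\theta^{\otimes\mathbf{e}}$, restricted to $X$, is the pullback via the structure map of an ample line bundle $\mathcal A$ on $\underline X=\mathbf{Proj}\bigoplus_n\Gamma(W,W\times\mathbb C_{n\theta})^G$, and $\underline X$ is proper over the affine scheme $S_0=\mathrm{Spec}(\mathbb C[W]^G)$.

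First I would prove $\beta(L_\theta)\ge 0$. Let $f=((C,\boldsymbol q),[x])$ be a quasimap of degree $\beta$ over $\mathbb C$, so $C$ is a proper (twisted, nodal) curve and $[x]\colon C\to\mathfrak X$ is representable. For each irreducible component $C_i$ of $C$, the generic point $\eta_i$ is not in the base locus $[x]^{-1}(\mathfrak X\setminus X)$ (which is zero-dimensional), hence $[x](\eta_i)\in X$ is semistable; by (i) there exist $n_i>0$ and $s_i\in\Gamma(W,W\times\mathbb C_{n_i\theta})^G$ with $[x]^*s_i|_{C_i}$ a nonzero global section of $[x]^*L_{n_i\theta}|_{C_i}$. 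A line bundle carrying a nonzero global section on a proper irreducible (twisted) curve has non-negative degree, so $\deg_{C_i}[x]^*L_\theta=\tfrac{1}{n_i}\deg_{C_i}[x]^*L_{n_i\theta}\ge 0$; summing over the components gives $\beta(L_\theta)=\deg_C[x]^*L_\theta\ge 0$, with equality exactly when $\deg_{C_i}[x]^*L_\theta=0$ for every $i$.

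Next, the three equivalences. If the quasimap is constant, i.e. $[x]$ factors through an inclusion $\mathbb B\Gamma\subset X$ with $\Gamma$ finite, then every $[x]^*L$ with $L\in\Pic\mathfrak X$ is pulled back from a character of $\Gamma$, hence is a torsion, degree-zero line bundle on $C$; so $\beta=0$, and trivially $\beta(L_\theta)=0$. Conversely, assume $\beta(L_\theta)=0$, so $\deg_{C_i}[x]^*L_\theta=0$ for all $i$. There are then no base points: a base point $p$ lies in the zero locus of every $[x]^*s$, in particular of $[x]^*s_i$ on the component $C_i$ containing it; but $[x]^*s_i|_{C_i}$ is a nonzero section of a degree-zero line bundle on an irreducible proper curve, hence nowhere vanishing, a contradiction. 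Thus $[x]\colon C\to X$. Passing to coarse moduli, $C\to\underline X$ factors as $C\xrightarrow{\pi}\underline C\xrightarrow{g}\underline X$; since $\underline C$ is proper and $\underline X\to S_0$ has affine target, the composite $\underline C\to S_0$ is set-theoretically a point, so $g$ sends $\underline C$ into a single fibre $Z$ of $\underline X\to S_0$, which is projective with $\mathcal A|_Z$ ample. Using prestability, $[x]^*L_\theta^{\otimes\mathbf{e}}$ descends to $\underline C$ and $\deg_{\underline C_i}g^*\mathcal A=\deg_{C_i}[x]^*L_\theta^{\otimes\mathbf{e}}=\mathbf{e}\,\deg_{C_i}[x]^*L_\theta=0$ for every component, so $g$ contracts every component of the connected curve $\underline C$, hence is constant with some image $\bar x_0\in\underline X$. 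As $C$ is reduced and lies over $\bar x_0$, $[x]$ factors through the residual gerbe of $X$ at $\bar x_0$, which is $\mathbb B\Gamma$ with $\Gamma=\mathrm{Aut}_X(\bar x_0)$ finite because $X$ is Deligne-Mumford; thus the quasimap is constant, closing the cycle.

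The step I expect to be the main obstacle is precisely $\beta(L_\theta)=0\Rightarrow(\text{constant})$: the delicate points are that prestability (base points away from nodes and markings, $[x]$ representable) is exactly what makes the relevant pullback line bundles and their degrees behave well on the coarse curve $\underline C$, and the factorization of a morphism from a reduced stack whose coarse composite is constant through the residual gerbe $\mathbb B\Gamma$, for which one invokes a local presentation $X=[U/\Gamma]$ near $\bar x_0$. Both are standard and are implicit in the way $\epsilon$-stability was set up above, so for the technical details I would cite \cite[Lemma 2.3]{Cheong_2015} (and \cite{Ciocan_Fontanine_2014}) rather than reproduce them.
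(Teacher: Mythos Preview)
The paper does not prove this lemma; it simply records it as a fact and cites \cite[Lemma~2.3]{Cheong_2015}. Your proposal supplies the argument behind that citation, and it is correct: the semiinvariant-section argument on each component for $\beta(L_\theta)\ge 0$, the observation that equality forces those sections to be nowhere vanishing (hence no base points), and the ampleness argument on the coarse moduli to force constancy are exactly the standard proof. One minor remark: you invoke prestability to descend $[x]^*L_\theta^{\otimes\mathbf e}$ to $\underline C$, but the lemma as stated is for arbitrary quasimaps, not necessarily prestable ones. This causes no trouble, since by that stage you have already established that $[x]$ lands in $X$, and the descent then follows directly from the fact that $L_\theta^{\otimes\mathbf e}|_X$ is pulled back from $\underline X$ and the universal property of the coarse moduli map $C\to\underline C$.
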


Now we will give the definition of \emph{semi-positive line bundles} for quasimaps
used in this paper.
\begin{definition}\label{def:pos-lin}
  Given a GIT data $(W,G,\theta)$ which gives rise to a GIT stack quotient $X$,
  let $L_{\tau}$ be the line bundle associated to a character $\tau$ of $G$.
  Then we call $L_{\tau}$ a semi-positive line bundle on $\mathfrak X$ if
  $$\beta(L_{\tau})\geq 0$$
  whenever $\beta$ is the degree of a quasimap to $X$. Sometimes if the GIT
  model of $X$ and the character $\tau$ are clear in the context, we will also
  call the restricted line bundle $L_{\tau}|_{X}$ a semi-positive line bundle on
  $X$. By abusing notations, we will denote the restricted line bundle on $X$ to
  be $L_{\tau}$ unless stated otherwise.
\end{definition}

\begin{remark}\label{rmk:pos-lin}
  Note the definition of a semi-positive line bundle on $X$ depends on the GIT model
  of $X$ and the character $\tau$. By Lemma \ref{lem:2.3}, the line bundle
  $L_{\theta}$ (or positive tensor power of $L_{\theta}$) is always a semi-positive
  line bundle. Note, using the standard GIT presentations of weighted projective
  spaces, the line bundles $\mathcal O(n)$ ($n>0$) on weighted projective spaces
  are always semi-positive line bundles by the definition above.
\end{remark}

In the following, we will give an explicit description of quasimaps to
\emph{toric Deligne-Mumford stacks}.
\begin{remark}[Quasimaps to toric stack]\label{toricquasimap}
  Recall the construction of a (semi-projective) toric Deligne-Mumford stack (or
  toric stack in short) by a GIT data $(W,G,\theta)$. Let $G=(\C^{*})^{k}$, and
  $W:=\oplus_{i=1}^{n}\C_{\rho_{i}}$ be a direct sum of $1$-dimensional
  representations of $G$ given by the characters $\rho_{i}\in \chi(G)$ for
  $1\leq i\leq n$. We will denote $[n]$ to be the collection of (\emph{not
    necessarily distinct}) characters $\rho_{i}$ of $G$ for $1\leq i\leq n$. The
  toric stack $X$ is defined to be the GIT stack quotient
 $$[W^{ss}(\theta)/ G].$$
 Since we always assume that $W^{ss}(\theta)=W^{s}(\theta)$, then $X$ is a
 \emph{semi-projective Deligne-Mumford stack}.
 
 Then in the definition of quasimaps to the toric stack $X$, we can replace the
 principal $G-$bundle $P$ by $k$ line bundles $(L_{j}:1\leq j\leq k)$ on $C$,
 and replace the section $x$ in the definition of quasimap by $n$ sections
 $$\vec{x}=(x_{i}:1\leq i\leq n)\in \oplus_{\rho\in[n]}\Gamma(C,L_{\rho})\ ,$$
 where $L_{\rho}$ is a line bundle on $C$ defined by
 $$L_{\rho}=\otimes_{j=1}^{k}L^{\otimes m_{j}}_{j}\ ,$$
 where and the numbers $(m_{j}:1\leq j\leq k)$ are determined by the unique
 relation
  $$\rho=\sum_{j=1}^{k}m_{j}\pi_{j}$$
  in the character group $\chi(G)$ of $G$. Here $(\pi_{j}:\;1\leq j\leq k)$ are
  the standard characters of $G=(\C^{*})^{k}$ by projecting to coordinates.
\end{remark}


One novel application of $\theta'-$stable quasimap for a rational character
$\theta'$ is the use of the notion of $(\theta,\boldsymbol{\varepsilon})-$stable quasimap
introduced in~\cite{ciocan-fontanine2016}. 

\begin{definition}\label{def:theta-ep-quasi}[$(\theta',\boldsymbol{\varepsilon})$-stable quasimap]
Given a tuple $\boldsymbol{\varepsilon}=(\varepsilon_{1},\cdots,
\varepsilon_{p})\in \mathbb (Q_{>0}\cap (0,1])^{p}$, we will
call prestable quasimap $\boldsymbol f:=(C,\boldsymbol{q},f:C\rightarrow [W/G]\times [\mathbb C/\C^{*}
]^{p})$ a $(\theta',\boldsymbol{\varepsilon})$-stable quasimap to $\mathfrak X$ of type
$(g,m,\beta)$ if $\boldsymbol{f}$ defines $\theta'\oplus
\bigoplus_{i=1}^{p}\varepsilon_{i}\mathrm{id}_{\C^{*}}$-stable quasimap to
$[W/G]\times [\mathbb C/\C^{*}]^{p}$ of type $(g,m,(\beta, 1,\cdots 1) )$. We
will denote $Q^{(\theta,\boldsymbol{\varepsilon})}_{g,m|p}(\mathfrak X,\beta)$ to be the
moduli stack of $(\theta,\boldsymbol{\varepsilon})$-stable quasimaps to $\mathfrak X$ of type
$(g,m,\beta)$. We call $\boldsymbol{f}$ is $(\theta,(0+)^{p})$-stable if $\boldsymbol f$ is
$(\theta,\boldsymbol{\varepsilon})$-stable for all $\boldsymbol{\varepsilon}\in \mathbb
Q_{>0}^{p}$. And we will denote $Q^{\theta,0+}_{g,m|p}(\mathfrak X,\beta)$ to be
the moduli stack of $(\theta,(0+)^{p})$-stable quasimaps to $\mathfrak X$ of type
$(g,m,\beta)$.
\end{definition}

\begin{remark}\label{rmk:equiv-theta-epl}
It's shown in~\cite{ciocan-fontanine2016} that a
$(\theta,\boldsymbol{\varepsilon})$-stable map to $\mathfrak X$ is equivalent to a
$\boldsymbol{\epsilon}$-weighted $\theta$-stable map to $\mathfrak X$, i.e. the
source curve is allowed to be a Hassett-stable curve with additional $p$ $\boldsymbol{\varepsilon}-$weighted markings. Thus the
moduli stack $Q^{\theta,\boldsymbol{\varepsilon}}_{g,m|p}(\mathfrak X,\beta)$ is equipped with $p$ additional universal evaluation maps to
$\mathfrak X$ (not only to $X$). We will denote them by

$$\hat{ev}_{j}: Q^{(\theta,\boldsymbol{\varepsilon})}_{g,m|p}(\mathfrak
X,\beta)\rightarrow \mathfrak X,\quad 1\leq j\leq p\ .$$
  
\end{remark}
\subsection{Quasimap invariants}\label{subsec:quasi-inv}
We define the quasimap invariants in this section following
\cite{ciocan-fontanine14_wall_cross_genus_zero_quasim, Ciocan_Fontanine_2014,
  Dan_Abramovich_2008, Cheong_2015}. Consider an algebraic torus $T$ action on $W$,
which commutes with
the given $G-$action on $W$, here $T$ can be the identity group. Assume further
that the $T-$fixed loci $\underline{X} _0^T$ of the affine quotient
$\underline{X}_0=\mathrm{Spec}(\mathbb C[W]^{G})$ is $0-$dimensional. We also denote
$K:=\mathbb Q(\{\lambda_i\})$ by the rational localized $T$-equivariant
cohomology of $\Spec \mathbb C$, with
$\{\lambda_1,\dots,\lambda_{\text{rank}(T)}\}$ corresponding to a basis for the
characters of $T$. Denote
\[ \Lambda _K := K [[\mathrm{Eff}(W, G, \theta ) ]] \]
to be the corresponding Novikov ring. We write $q^\beta$ for
the element corresponding to $\beta$ in $\Lambda _K$ so that $\Lambda _K$ is the
$q$-adic completion. 

Given any two elements $\alpha_{1},\alpha_{2}$ in the $T-$equivariant {\it Chen-Ruan
  cohomology} of $X$,
$$H^*_{\mathrm{CR}, T} (X, \mathbb Q):=H ^*_T(\bar{I}_{\mu}X , \mathbb Q ) \ ,$$
We can define the Poincar\'e
pairing in the {\it non-rigidified }cyclotomic inertia stack $I_\mu X$ of $X$": 
$$\langle \alpha_{1} , \alpha_{2} \rangle _{\mathrm{orb}} := \int _{\sum  _{r\in \mathbb N_{\geq 1} } r^{-1} [\bar{I}_{\mu _r}X]} \alpha_{1} \cdot \iota ^* \alpha_{2} \ .$$
Here $\iota$ is the involution of $\bar{I}_{\mu}X $ obtained from the inversion
automorphisms. Therefore, the diagonal
class $[\Delta _{\bar{I}_{\mu _r}X}] $ obtained via push-forward of the
fundamental class by $(\mathrm{id}, \iota): \bar{I}_{\mu _r}X \rightarrow
\bar{I}_{\mu _r}X \times \bar{I}_{\mu _r}X$ can be written as
$$\sum _{r=1}^{\infty} r[ \Delta _{\bar{I}_{\mu _r}X}]
= \sum _{\alpha} \phi_{\alpha} \otimes \phi^{\alpha} \text{ in }
H^*(\bar{I}_{\mu}X \times \bar{I}_{\mu}X, \mathbb Q), $$ where 
$\{ \phi_{\alpha}\}$ is a basis of $H^*_{\mathrm{CR}, T} (X,
\mathbb Q)$ with $\{\phi^{\alpha}\}$ the dual basis with respect to the Poincar\'e
pairing defined above.

Denote by $\bar{\psi}_i$ the first Chern class of the universal cotangent
line whose fiber at $((C, q_1, ..., q_m), [x])$ is the cotangent space of the
coarse moduli $\underline{C}$ of $C$ at $i$-th marking $\underline{q} _i$. For
non-negative integers $a_i$ and classes $\alpha _i \in H ^*_T (\bar{I}_{\mu}X ,
\mathbb Q)$, $\delta_{j}\in H^{*}(\mathfrak X,\mathbb Q)$, we write
\begin{align*}
  \langle \alpha _1\bar{\psi} ^{a_1}, ..., \alpha _m
  \bar{\psi}^{a_m};\delta_{1},\cdots,\delta_{l} \rangle^{\theta',\boldsymbol{\varepsilon}} _{0, m, \beta} & :=
                                                       \int _{[Q^{\theta',\boldsymbol{\varepsilon}}_{0,m|p}(X, \beta )]^{\mathrm{vir} }}  \prod _i  ev _i ^* (\alpha _i) \bar{\psi} ^{a_i}_i \prod_{j}\hat{ev}_{j}^{*}(\delta_{j})\ .
\end{align*}
When $\varepsilon$ is empty, $\theta'=\epsilon\theta$ for sufficiently large
rational number $\epsilon$, the above formula recovers the usual Gromov-Witten invariants, in which
case, we will write this as 
$$\langle \alpha _1\bar{\psi} ^{a_1}, ... , \alpha _m\bar{\psi}^{a_m}\rangle\ .$$

We will also need the quasimap Chen-Ruan classes
\begin{equation} \label{tilde-ev}(\widetilde {ev _j})_* = \iota
  _*(\boldsymbol{r} _{j}(ev_{j})_*),\end{equation} where $\boldsymbol{r} _{j}$
is the order function of the band of the gerbe structure at the marking $q_{j}$.
Define a class in $H_*^T(\bar{I}_{\mu}X )\cong H^*_T(\bar{I}_{\mu}X )$ by
\begin{align*}
  \langle \alpha _1,..., \alpha _m, - \rangle
  ^{\epsilon}_{0, \beta} &:=(\widetilde{ev_{m+1}})_* \left((\prod ev _i ^* \alpha _i) \cap [Q^{\epsilon}_{0, m}(X, \beta)]^{\mathrm{vir} }\right)\\
                         &=\sum _{\alpha}\phi^{\alpha} \langle\alpha_1,...,\alpha_m,\phi_{\alpha}\rangle^{\epsilon}_{0,m+1,\beta}\ .
\end{align*}

\section{Geometry of complete intersections in toric Deligne-Mumford
  stacks}\label{sec:I-fun}
$\textbf{From now on}$, we will fix a GIT data $(W,G,\theta)$, which represents
a proper toric Deligne-Mumford stack (or toric stack in short)
$X:=[W^{ss}(\theta)/ G]$ as in remark \ref{toricquasimap}. We will also fix a
vector bundle $E$ over $\mathfrak X:=[W/G]$ which is a direct sum of line
bundles $\oplus_{b=1}^{c}L_{\tau_{b}}$ associated to characters
$(\tau_{b})_{b=1}^{c}$ of $G$. Let $s_{b} \in \Gamma(W,W \!\times\! \mathbb
C_{\tau_{b}})^{G}$ be sections such that they cut off an irreducible complete
intersection in $W$ which is smooth in $W^{ss}(\theta)$. Denote by $AY$ to be
the zero loci of the section $s:=\oplus_{b=1}^{c} s_{b}$ and by
$AY^{ss}(\theta)$ (or $AY^{ss}$) be semistable loci, then $(AY,G,\theta)$ will
also be a GIT target. Note $AY^{ss}$ is equal to the intersection of $W^{ss}$
and $AY$. Denote $Y:=[AY^{ss}(\theta)/G]$ to be the corresponding toric stack
complete intersections inside $X$ and denote $\mathfrak Y:=[AY/G]$ to be the
corresponding quotient stack of $Y$.

It's well known the rigidified inertia stacks of $Y$ and $X$ are
$$\bar{I}_{\mu}Y=\sqcup_{g\in G}[AY^{ss}(\theta)^{g}/(G/\langle{g}  \rangle)],\; \bar{I}_{\mu}X=\sqcup_{g\in G}[W^{ss}(\theta)^{g}/(G/\langle{g}  \rangle)]\ .$$

To describe the relationship of rigidified inertia stacks of $X$ and $Y$, we
will need the following lemma:
\begin{lemma}\label{lem:inertiacomp}
  For any torsion element $g\in G$, $AY^{ss}(\theta)^{g}\subset
  W^{ss}(\theta)^{g}$ is a complete intersection with respect to the sections
  $\{s_{b}|b:\tau_{b}(g)=1\}$.
\end{lemma}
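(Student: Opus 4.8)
The plan is to work locally and use the linearity of the $G$-action on $W = \oplus_{\rho \in [n]} \C_\rho$. Fix a torsion element $g \in G$. Since $g$ acts on each coordinate line $\C_{\rho_i}$ by the scalar $\rho_i(g)$, the fixed locus $W^g$ is precisely the coordinate subspace $\oplus_{i : \rho_i(g) = 1} \C_{\rho_i}$. First I would observe that the defining sections $s_b \in \Gamma(W, W \times \C_{\tau_b})^G$, being $G$-invariant with respect to the twisted action on $W \times \C_{\tau_b}$, are polynomial functions on $W$ that transform by the character $\tau_b$ under $g$; that is, $s_b(g \cdot w) = \tau_b(g)^{-1} s_b(w)$ (or $\tau_b(g) s_b(w)$, depending on the convention in Definition \ref{def:bor}). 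Restricting $s_b$ to $W^g$ and using $g$-invariance of $W^g$, the restriction $s_b|_{W^g}$ is an eigenfunction for the induced $g$-action with eigenvalue $\tau_b(g)^{\pm 1}$; hence $s_b|_{W^g} \equiv 0$ whenever $\tau_b(g) \neq 1$, while for $\tau_b(g) = 1$ the restriction is a genuine (possibly nonzero) section of $L_{\tau_b}|_{W^g}$. This shows set-theoretically that $AY^g = AY \cap W^g$ is cut out inside $W^g$ by exactly the sub-collection $\{s_b : \tau_b(g) = 1\}$.

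Next I would upgrade this to the scheme-theoretic / complete-intersection statement on the semistable loci. The key input is that $AY^{ss}(\theta) = AY \cap W^{ss}(\theta)$ is smooth of the expected codimension $c$ in $W^{ss}(\theta)$, i.e. $s = \oplus_b s_b$ is a regular section there. Taking $g$-fixed points is an exact operation on the smooth locus (the fixed locus of a finite-order automorphism of a smooth variety is smooth, and the conormal sequence splits into $g$-eigenspaces), so $AY^{ss}(\theta)^g \hookrightarrow W^{ss}(\theta)^g$ is again a regular embedding, and its codimension equals the number of $b$ with $\tau_b(g) = 1$ — these are exactly the eigenvalue-$1$ summands of the normal bundle $\oplus_b L_{\tau_b}|_{W^{ss}\!,\,g\text{-fixed part}}$. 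Therefore the sections $\{s_b|_{W^{ss}(\theta)^g} : \tau_b(g) = 1\}$ form a regular sequence cutting out $AY^{ss}(\theta)^g$, which is the assertion.

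I expect the main obstacle to be bookkeeping around the twisted $G$-action used in the Borel construction and making sure the eigenvalue computation for $s_b$ under $g$ has the correct convention, together with checking that the expected-codimension (regularity) claim really does descend to fixed loci on the semistable locus rather than only on the smooth locus of $AY$ itself; once smoothness of $AY^{ss}(\theta)$ is in hand, the fixed-locus argument is formal, but one should be careful that $W^{ss}(\theta)^g$ is still smooth (which it is, since $W^{ss}(\theta)$ is smooth and $g$ has finite order over $\C$) so that "regular section restricted to a smooth fixed locus stays regular on the eigenvalue-$1$ part" applies verbatim. A clean way to phrase the final step is to note $N_{AY^{ss}(\theta)^g/W^{ss}(\theta)^g} = \bigoplus_{b:\tau_b(g)=1} L_{\tau_b}|_{AY^{ss}(\theta)^g}$, which both identifies the codimension and exhibits the cutting sections.
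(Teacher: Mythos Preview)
Your proposal is correct and follows essentially the same approach as the paper. The paper's proof is more terse: it goes directly to the short exact sequence of tangent spaces $0 \to T_p AY^{ss}(\theta) \to T_p W^{ss}(\theta) \to \oplus_b \C_{\tau_b} \to 0$ at a point $p \in AY^{ss}(\theta)^g$, observes this is $g$-equivariant, and takes $g$-invariants (exact in characteristic zero) to conclude; your normal-bundle formulation $N_{AY^{ss}(\theta)^g/W^{ss}(\theta)^g} = \bigoplus_{b:\tau_b(g)=1} L_{\tau_b}$ is the dual statement of the same fact, and your preliminary observation that $s_b|_{W^g} \equiv 0$ when $\tau_b(g) \neq 1$ is a useful complement the paper leaves implicit.
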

\begin{proof}
  For any point $p\in W^{ss}(\theta)^{g}$ such that $s$ vanishes on $p$, we have
  the following short exact sequence of tangent spaces
  $$0\rightarrow T_{p}AY^{ss}(\theta)\rightarrow
  T_{p}W^{ss}(\theta)\rightarrow \oplus_{b=1}^{c}\mathbb C_{\tau_{b}}\rightarrow
  0\ ,$$ which is also exact as representations of the finite group generated by
  $g$. Taking the $g$-invariant subspace of the above exact sequence, we get
    $$0\rightarrow T_{p}AY^{ss}(\theta)^{g}\rightarrow
    T_{p}W^{ss}(\theta)^{g}\rightarrow \oplus_{b:\tau_{b}(g)=1}\mathbb
    C_{\tau_{b}}\rightarrow 0\ ,$$ which imply the lemma.
  \end{proof}

  For any degree $\beta\in \mathrm{Eff}(W,G,\beta)$, we will also need an
  element $g_{\beta}\in G$, and two special sub-varieties $Y^{ss}_{\beta}\subset
  AY^{ss},$ $Z^{ss}_{\beta}\subset W^{ss}$ in the statement of the mirror
  theorem:
$$g_{\beta}:=(\mathrm{e}^{2\pi \sqrt{-1}\beta(L_{\pi_{1}})},\cdots,\mathrm{e}^{2\pi
  \sqrt{-1}\beta(L_{\pi_{k}})})\in G=(\C^{*} )^{k}\ ,$$
$$Y^{ss}_{\beta}:=(AY^{ss})^{g_{\beta}}\cap \{(x_{i})\in
W|x_{i}=0\; \forall i: \beta(L_{\rho_{i}})\in \mathbb Z_{<0} \}\ ,$$ and
$$Z^{ss}_{\beta}:=(W^{ss})^{g_{\beta}}\cap \{(x_{i})\in
W|x_{i}=0\; \forall i: \beta(L_{\rho_{i}})\in \mathbb Z_{<0} \}\ .$$ In the end
of this section, we will prove a lemma \ref{lem:comp-graph} relating the geometry
of $Y^{ss}_{\beta}$ and $Z^{ss}_{\beta}$.

The geometrical significance of introducing $Y^{ss}_{\beta}$ and
$Z^{ss}_{\beta}$ is that the quotient stacks $[Y^{ss}_{\beta}/G]$ and
$[Z^{ss}_{\beta}/G]$ describe important classes in the stacky loop spaces for
$X$ and $Y$ which we now describe.

First of all, let's recall the definition of stacky loop space into the toric
stack $X$~\cite{Cheong_2015}. Set $U=\C^{2}\backslash\{0\}$, for any positive
integer $a$, denote $\P_{a,1}$ to be the quotient stack $[U / \C^{*} ]$ defined
by the $\C^{*}$-action on $U$ with weights $[a,1]$ so that $0:=[0:1]$ is a
non-stacky point and $\infty :=[1:0]\cong \mathbb B\mu_{a}$ is a stacky point.
The stacky loop space into $X$
$$ Q_{\P_{a,1}}(X,\beta)\subset Hom^{rep}_{\beta}(\P_{a,1},\mathfrak X)$$
is defined to be the moduli stack of representable morphisms from $\P_{a,1}$ to
$\mathfrak X$ of degree $\beta$ such that the generic point of $\P_{a,1}$ is
mapped into $X$. By~\cite[Lemma 4.6]{Cheong_2015}, for such representable
morphism to exist, $a$ must be the order of the finite cyclic group generated by
$g_{\beta}$. We note $a$ is also the minimal positive integer making
$a\beta(L_{\tau})$ an integer for all character $\tau$ of $G$. We can define the
stacky loop space into $Y$ in a similar manner, denote
$$ Q_{\P_{a,1}}(Y,\beta)\subset Hom^{rep}_{\beta}(\P_{a,1},\mathfrak Y)$$
by the moduli stack of representable morphisms from $\P_{a,1}$ to $\mathfrak X$
of degree $\beta$ such that the generic point of $\P_{a,1}$ is mapped into $Y$.

Let $a$ be the integer associated to $g_{\beta}$. Let $\mathbb C[z_{1},z_{2}]$
be the polynomial ring on variables $z_{1}$ and $z_{2}$ with weights $a$ and $1$
respectively. Consider the finite dimensional vector space
$$ W_\beta := \bigoplus _{\rho \in [n]} \mathbb C[z_{1}, z_{2}] _{a\beta (L_\rho )} $$
with the $G$-action given by the direct sum of the diagonal $G$-action on
$\mathbb C[z_{1},z_{2}]_{a\beta (L_\rho)}$ by the weight $\rho$, then $\mathbb
C[z_{1},z_{2}]_{a\beta (L_\rho)}\cong \bigoplus \mathbb C _\rho$. Given any
element of $W_{\beta}$, we can naturally associate a morphism from $\P_{a,1}$ to
$\mathfrak X$ of degree $\beta$. Then we have the equivalence of the following
two stacks:
$$ \Hom _{\beta} (\P_{a,1}, \mathfrak X) \cong
\Hom^{\mathrm{rep}}_{\beta}(\P_{a,1}, \mathfrak X) \cong [W_\beta / G] \ , $$
under which correspondence, we have
$$Q_{\P_{a,1}}(X,\beta)\cong [W^{ss}_{\beta}(\theta)/G]\ .$$

Consider the $\C^{*}-$action on $P_{a,1}$ defined by
$$t(\zeta_{1},\zeta_{2})=(t\zeta_{1},\zeta_{2})\ , $$
for all $(\zeta_{1},\zeta_{2})\in U$ and $t\in \C^{*}$. This induces a
$\C^{*}-$action on $Q_{\P_{a,1}}(X,\beta)$ as well as on
$Q_{\P_{a,1}}(Y,\beta)$. Denote $F_{\beta}(X)$(resp. $F_{\beta}(Y)$) to be the
subspace of $Q_{\P_{a,1}}( X,\beta)$(resp. $Q_{\P_{a,1}}(Y,\beta)$) in which the
representable morphism $f:\P_{a,1}\rightarrow \mathfrak X$ (resp. $f:\P_{a,1}
\rightarrow \mathfrak Y$) with all the degree concentrated on the point $[0:1]$.
More explicitly, $F_{\beta}(X)$ (resp. $F_{\beta}(Y)$) is comprised of the
morphisms in the form
$$f:\P_{a,1} \rightarrow \mathfrak X\quad (\mathrm{resp}. \mathfrak Y), \quad (\zeta_{1},\zeta_{2})\mapsto
(a_{\rho}\zeta_{1}^{\beta(L_{\rho})})_{ \rho\in[n]}\ ,$$ where
$(a_{\rho}z_{1}^{\beta(L_{\rho})}:\rho\in[n])\in W^{ss}_{\beta}(\theta)$. Note
for such a map to be well-defined, $a_{\rho}$ must be $0$ when
$\beta(L_{\rho})\notin \Z_{\geq 0}$.

We can see that $F_{\beta}(X)$ is a component of the $\C^{*}-$fixed loci of
$Q_{\P_{a,1}}(X,\beta)$, which we can describe more explicitly as follows.
Define
$$Z_{\beta}:=\bigoplus_{\rho\in[n],\beta(L_{\rho})\in \Z_{\geq 0}} \mathbb C
\cdot z_{1}^{\beta(L_{\rho})} \subset W_{\beta}. $$ We have $Z^{ss}_{\beta}\cong
Z_{\beta}\cap W^{ss}_{\beta}(\theta)$, and
$$ F_{\beta}(X) \cong[Z^{ss}_{\beta}/G],\; \text{and}\; F_{\beta}(Y)\cong [Y^{ss}_{\beta}/G]\ .$$

Let $g_{\beta}$ be as above, then we have another characterization of
$Z_{\beta}$ and $Z^{ss}_{\beta}$:
$$Z_{\beta}\cong W^{g_{\beta}}\cap \bigcap_{\substack{\rho\in[n]\\ \beta(L_{\rho})\in
    \Z_{<0}}}D_{\rho}\quad \text{and} \quad
Z^{ss}_{\beta}\cong W^{ss}(\theta)^{g_{\beta}}\cap \bigcap_{\substack{\rho\in[n]\\
    \beta(L_{\rho})\in \Z_{<0}}}D_{\rho}\ .$$ Here $D_{\rho}$ is the divisor of
$W$ given by $x_{\rho}=0$.

It's clear that $Y^{ss}_{\beta}$ is the vanishing loci of the sections
$\{s_{b}|b:\beta(L_{\tau_{b}})\in \Z\}$ on $Z^{ss}_{\beta}$, but this may not be
a complete intersection. Indeed, one can show the following.
\begin{lemma}\label{lem:comp-graph}
  For any $b$ such that $\beta(L_{\tau_{b}})\in \mathbb Z_{<0}$, the section
  $s_{b}$ vanishes on $Z^{ss}_{\beta}$. Thus $Y^{ss}_{\beta}$ is merely the
  vanishing loci of sections $\{s_{b}|b:\beta(L_{\tau_{b}})\in \Z_{\geq 0}\}$ in
  $Z^{ss}_{\beta}$.
\end{lemma}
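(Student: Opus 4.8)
The plan is to reduce everything to a weight count on monomials. Concretely, a section $s_b\in\Gamma(W,W\times\mathbb C_{\tau_b})^G$ is a polynomial in the linear coordinates $(x_\rho)_{\rho\in[n]}$ of $W=\bigoplus_{\rho\in[n]}\mathbb C_\rho$ which is $G$-semiinvariant of weight $\tau_b$; equivalently, every monomial $\prod_{\rho}x_\rho^{a_\rho}$ appearing in $s_b$ with nonzero coefficient satisfies $\sum_{\rho}a_\rho\rho=\tau_b$ in the character group $\chi(G)$. Applying the homomorphism $\beta\colon\chi(G)\to\mathbb Q$, $\rho\mapsto\beta(L_\rho)$, to this relation gives
\[
\sum_{\rho}a_\rho\,\beta(L_\rho)=\beta(L_{\tau_b})
\]
for each such monomial.

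Next I would identify which coordinates survive on $Z^{ss}_\beta$. Since $g_\beta$ acts on $\mathbb C_\rho$ by the scalar $e^{2\pi\sqrt{-1}\,\beta(L_\rho)}$, its fixed locus is $W^{g_\beta}=\bigoplus_{\rho:\beta(L_\rho)\in\mathbb Z}\mathbb C_\rho$, so by the description $Z^{ss}_\beta=(W^{ss})^{g_\beta}\cap\bigcap_{\rho:\beta(L_\rho)\in\mathbb Z_{<0}}D_\rho$ recorded above, the coordinate function $x_\rho$ vanishes identically on $Z^{ss}_\beta$ unless $\beta(L_\rho)\in\mathbb Z_{\geq0}$. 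Hence a monomial $\prod_\rho x_\rho^{a_\rho}$ of $s_b$ that is not identically zero on $Z^{ss}_\beta$ has $a_\rho>0$ only when $\beta(L_\rho)\in\mathbb Z_{\geq0}$, and then $\sum_\rho a_\rho\beta(L_\rho)\geq0$. Combined with the identity of the previous paragraph and the hypothesis $\beta(L_{\tau_b})\in\mathbb Z_{<0}$ this is impossible, so every monomial of $s_b$ restricts to $0$ and $s_b|_{Z^{ss}_\beta}=0$. (The same count shows $s_b|_{Z^{ss}_\beta}=0$ whenever $\beta(L_{\tau_b})\notin\mathbb Z_{\geq0}$, which explains why only the $s_b$ with $\beta(L_{\tau_b})\in\mathbb Z$ entered the remark preceding the lemma.)

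For the second assertion I would unwind definitions: $AY^{ss}(\theta)=W^{ss}(\theta)\cap\bigcap_{b=1}^{c}\{s_b=0\}$, so intersecting with the $g_\beta$-fixed locus and with the divisors $D_\rho$ for $\beta(L_\rho)\in\mathbb Z_{<0}$ gives $Y^{ss}_\beta=Z^{ss}_\beta\cap\bigcap_{b=1}^{c}\{s_b=0\}$; by the first part the equations $s_b$ with $\beta(L_{\tau_b})\in\mathbb Z_{<0}$ (and those with $\beta(L_{\tau_b})\notin\mathbb Z$) are already identically zero on $Z^{ss}_\beta$, leaving $Y^{ss}_\beta=Z^{ss}_\beta\cap\bigcap_{b:\,\beta(L_{\tau_b})\in\mathbb Z_{\geq0}}\{s_b=0\}$.

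This lemma is genuinely light on content, so I do not expect a real obstacle; the only thing to be careful about is matching conventions, namely that $\Gamma(W,W\times\mathbb C_{\tau_b})^G$ really consists of weight-$\tau_b$ polynomials in the $x_\rho$ with each $x_\rho$ of weight $\rho$, and that $\beta$ extends from $\mathrm{Pic}(\mathfrak X)$ to a homomorphism on $\chi(G)$ compatibly with $L_\rho\leftrightarrow\rho$. Once that dictionary is fixed, the whole argument is the single inequality $\sum_\rho a_\rho\beta(L_\rho)\geq0$ played against $\beta(L_{\tau_b})<0$.
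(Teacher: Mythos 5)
Your argument is correct, and it reaches the conclusion by a genuinely different route than the paper. The paper's proof is geometric: it uses the identification (set up earlier in that section) of a point $\vec{x}\in Z^{ss}_{\beta}$ with a $\C^{*}$-fixed morphism $[\vec{x}]:\P_{a,1}\to\mathfrak X$, $[\zeta_{1},\zeta_{2}]\mapsto(a_{\rho}\zeta_{1}^{\beta(L_{\rho})})_{\rho}$, observes that $[\vec{x}]^{*}s_{b}=s_{b}(\vec{x})\,\zeta_{1}^{\beta(L_{\tau_{b}})}$ is a section of the pulled-back line bundle of negative degree $\beta(L_{\tau_{b}})$ on $\P_{a,1}$, and concludes $s_{b}(\vec{x})=0$ since such a bundle has no nonzero sections. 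Your proof replaces this with a direct monomial weight count: since $G$ is a torus acting diagonally on the affine space $W$, the semiinvariant $s_{b}$ is a sum of monomials of total weight $\tau_{b}$, so $\sum_{\rho}a_{\rho}\beta(L_{\rho})=\beta(L_{\tau_{b}})<0$, while any monomial not killed by the defining equations of $Z^{ss}_{\beta}$ involves only coordinates with $\beta(L_{\rho})\in\Z_{\geq0}$. The two arguments rest on the same identity (the paper's pullback formula is exactly your weight relation in disguise), but yours is self-contained and does not invoke the stacky loop space at all; it also uniformly covers the case $\beta(L_{\tau_{b}})\notin\Z$, which the paper handles separately in the sentence preceding the lemma via $\tau_{b}(g_{\beta})\neq1$ on the $g_{\beta}$-fixed locus. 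What the paper's formulation buys is independence from the monomial decomposition: the negative-degree argument would survive in settings where $W$ is not a representation with diagonal action (or $G$ is non-abelian), whereas your count is tied to the toric presentation — which is all that is needed here. Your reduction of the second assertion, $Y^{ss}_{\beta}=Z^{ss}_{\beta}\cap\bigcap_{b:\beta(L_{\tau_{b}})\in\Z_{\geq0}}\{s_{b}=0\}$, is the same bookkeeping as in the paper and is fine.
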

\begin{proof}
  For $b$ with $\beta(L_{\tau_{b}})\in \Z_{\leq 0}$, for any point
  $\vec{x}=(a_{\rho})_{ \rho\in [n]}\in Z^{ss}_{\beta}$, the corresponding
  morphism in $F_{\beta}(X)$ is in the form
  $$[\vec{x}]:\P_{a,1}\rightarrow \mathfrak X: [\zeta_{1},\zeta_{2}]\rightarrow (a_{\rho}\zeta_{1}^{\beta(L_{\rho})})_{\rho\in[n]}\ .$$
  Then the pull back of section $s_{b}$ to $\P_{a,1}$ becomes
  $s_{b}(\vec{x})z_{1}^{\beta(L_{\tau_{b}})}$. However as the pull-back line
  bundle $[\vec{x}]^{*}L_{\tau_{b}}$ is of degree $\beta(L_{\tau_{b}})<0$ on
  $\P_{a,1}$, hence there is no nonzero section in the line bundle $[\vec{x}]^{*}L$,
  which implies that $s_{b}(\vec{x})=0$. Now the lemma follows.
\end{proof}

Denote $E_{\geq 0}:=\oplus_{b:\beta(L_{\tau_{b}})\in \mathbb Z_{\geq
    0}}L_{\tau_{b}}$ and $s_{\geq 0}=\oplus_{b:\beta(L_{\tau_{b}})\in \mathbb
  Z_{\geq 0}} s_{b}$. Using the above lemma, we can define the Gysin morphism
$$s^{!}_{E_{\geq 0},loc}:A_{*}([Z^{ss}_{\beta}/(G/\langle g^{-1}_{\beta}\rangle)])\rightarrow
A_{*}([Y^{ss}_{\beta}/(G/\langle g^{-1}_{\beta}\rangle)])$$
as the localized top Chern class~\cite[\S14.1]{MR732620} with respect to the vector
bundle $E_{\geq 0}$ over $[Z^{ss}_{\beta}/(G/\langle
g^{-1}_{\beta}\rangle)]$ and the section $s_{\geq 0}$. Let
$i:\bar{I}_{\mu}Y\rightarrow \bar{I}_{\mu}X$ be the natural inclusion. Now we
discuss two implications of the above lemma:
\begin{corollary}\label{cor:com-graph}
  \begin{enumerate}
  \item If the set $\{b\;|\;\beta(L_{\tau_{b}})\in \mathbb Z\}$ is exactly the set
    $\{b\; |\; \beta(L_{\tau_{b}})\in \mathbb Z_{\geq 0}\}$, then we have
    $$i_{*}(s^{!}_{E_{\geq
        0},\mathrm{loc}}([Z^{ss}_{\beta}/(G/\langle
    g^{-1}_{\beta}\rangle)]))=(\prod_{\rho:\beta(L_{\rho})\in \mathbb Z_{\leq
        0}}D_{\rho})\cdot \mathbbm 1_{g^{-1}_{\beta}}$$
    in $A^{*}(\bar{I}_{g^{-1}_{\beta}}Y)$, where $\mathbbm 1_{g^{-1}_{\beta}}$ is the fundamental class of
    $\bar{I}_{g^{-1}_{\beta}}Y$, $D_{\rho}$ is the class of the hyperplane given
    by $x_{\rho}=0$.
  \item If the set $\{b\; |\; \beta(L_{\tau_{b}})\in \mathbb Z\}$ is empty, then we have
    $Y^{ss}_{\beta}=Z^{ss}_{\beta}$, and
    $\bar{I}_{g^{-1}_{\beta}}Y=\bar{I}_{g^{-1}_{\beta}}X$, and $s^{!}_{E_{\geq
        0},\mathrm{loc}}$ is the identity morphism. Thus
    $$i_{*}(s^{!}_{E_{\geq
        0},\mathrm{loc}}([Z^{ss}_{\beta}/(G/\langle
    g^{-1}_{\beta}\rangle)]))=(\prod_{\rho:\beta(L_{\rho})\in \mathbb Z_{\leq
        0}}D_{\rho})\cdot \mathbbm 1_{g^{-1}_{\beta}}$$ 
   in $A^{*}(\bar{I}_{g^{-1}_{\beta}}Y)$. 
  \end{enumerate}
\end{corollary}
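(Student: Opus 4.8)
The plan is to unwind $s^{!}_{E_{\geq 0},\mathrm{loc}}$ into a refined Gysin homomorphism and then reduce everything to the class of a coordinate linear subspace inside a fixed‑point stack. I would work throughout on the $g_{\beta}$‑fixed loci: set $P:=W^{ss}(\theta)^{g_{\beta}}$ and $N:=AY^{ss}(\theta)^{g_{\beta}}$, so that (using that $g_{\beta}$ and $g^{-1}_{\beta}$ have the same fixed locus, and that $P$, $N$ are smooth because $W$ and $AY^{ss}(\theta)$ are) $\bar{I}_{g^{-1}_{\beta}}X=[P/(G/\langle g^{-1}_{\beta}\rangle)]$ and $\bar{I}_{g^{-1}_{\beta}}Y=[N/(G/\langle g^{-1}_{\beta}\rangle)]$. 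Both $[Z^{ss}_{\beta}/(G/\langle g^{-1}_{\beta}\rangle)]$ and $[Y^{ss}_{\beta}/(G/\langle g^{-1}_{\beta}\rangle)]$ — abbreviate them $\bar{Z}_{\beta}$ and $\bar{Y}_{\beta}$ — are then closed substacks of these, and the ``$i_{*}$'' in the statement is proper pushforward along $\bar{Y}_{\beta}\hookrightarrow\bar{I}_{g^{-1}_{\beta}}Y$. All of this is to be carried out with $(G/\langle g^{-1}_{\beta}\rangle)$‑equivariant Chow groups, where the localized‑top‑Chern‑class/refined‑Gysin formalism of \cite[\S14]{MR732620} applies.

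The first step is to make the complete‑intersection structure explicit. Since $\tau_{b}(g_{\beta})=\mathrm{e}^{2\pi\sqrt{-1}\,\beta(L_{\tau_{b}})}$, the condition $\tau_{b}(g_{\beta})=1$ is equivalent to $\beta(L_{\tau_{b}})\in\mathbb{Z}$; hence Lemma~\ref{lem:inertiacomp}, applied to $g=g^{-1}_{\beta}$, exhibits $N\hookrightarrow P$ as the zero scheme of the section $\sigma:=\bigoplus_{b:\,\beta(L_{\tau_{b}})\in\mathbb{Z}}s_{b}$ of $\mathcal{E}:=\bigoplus_{b:\,\beta(L_{\tau_{b}})\in\mathbb{Z}}L_{\tau_{b}}$ over $[P/(G/\langle g^{-1}_{\beta}\rangle)]$, and the exactness of the tangent sequences in its proof shows that $\sigma$ is a regular section, so $N\hookrightarrow P$ is a regular embedding of codimension equal to the number of those sections. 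Under the hypothesis of~(1) the index sets $\{b:\beta(L_{\tau_{b}})\in\mathbb{Z}\}$ and $\{b:\beta(L_{\tau_{b}})\in\mathbb{Z}_{\geq 0}\}$ coincide, so $\mathcal{E}=E_{\geq 0}$ and $\sigma=s_{\geq 0}$ as bundle/section over $[P/(G/\langle g^{-1}_{\beta}\rangle)]$; restricting along $\bar{Z}_{\beta}\hookrightarrow[P/(G/\langle g^{-1}_{\beta}\rangle)]$ and invoking Lemma~\ref{lem:comp-graph}, the zero scheme of $s_{\geq 0}|_{\bar{Z}_{\beta}}$ is $\bar{Y}_{\beta}=\bar{Z}_{\beta}\cap\bar{I}_{g^{-1}_{\beta}}Y$. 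Since the localized‑top‑Chern‑class operation of a regular section (and of its restriction along $j\colon\bar Z_\beta\hookrightarrow[P/(G/\langle g^{-1}_{\beta}\rangle)]$) computes the refined Gysin homomorphism, this gives $s^{!}_{E_{\geq 0},\mathrm{loc}}([\bar{Z}_{\beta}])=i_{N}^{!}([\bar{Z}_{\beta}])$ in the Chow group of $\bar{Y}_{\beta}$, where $i_{N}^{!}$ is the refined Gysin homomorphism of $N\hookrightarrow P$.

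It then remains to push forward and evaluate. By compatibility of $i_{N}^{!}$ with proper pushforward, $i_{*}\,i_{N}^{!}([\bar{Z}_{\beta}])=i_{N}^{!}(j_{*}[\bar{Z}_{\beta}])$, and $Z^{ss}_{\beta}$ is the coordinate linear subspace of $P$ cut out by the (visibly regular) sequence of coordinate functions used in its definition, so $j_{*}[\bar{Z}_{\beta}]=\big(\prod_{\rho}D_{\rho}\big)\cap[\bar{I}_{g^{-1}_{\beta}}X]$, the product being over exactly the coordinate hyperplanes defining $Z^{ss}_{\beta}$ (i.e.\ over the $\rho$ displayed in the statement); since $i_{N}^{!}$ commutes with capping by pulled‑back divisor classes and carries $[\bar{I}_{g^{-1}_{\beta}}X]$ to $[\bar{I}_{g^{-1}_{\beta}}Y]=\mathbbm{1}_{g^{-1}_{\beta}}$, one obtains exactly $\big(\prod_{\rho}D_{\rho}\big)\cdot\mathbbm{1}_{g^{-1}_{\beta}}$. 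Part~(2) is the degenerate instance of this argument: when $\{b:\beta(L_{\tau_{b}})\in\mathbb{Z}\}$ is empty, $E_{\geq 0}$ has rank $0$ so $s^{!}_{E_{\geq 0},\mathrm{loc}}$ is the identity, $N=P$ (hence $\bar{I}_{g^{-1}_{\beta}}Y=\bar{I}_{g^{-1}_{\beta}}X$) and $Y^{ss}_{\beta}=Z^{ss}_{\beta}$ by Lemma~\ref{lem:comp-graph}, and the same coordinate‑subspace computation gives the formula directly.

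The step I expect to be the main obstacle is the identification $s^{!}_{E_{\geq 0},\mathrm{loc}}([\bar{Z}_{\beta}])=i_{N}^{!}([\bar{Z}_{\beta}])$, which needs three things checked honestly. First, that $E_{\geq 0}$ and $s_{\geq 0}$ over $\bar{Z}_{\beta}$ are literally the pullbacks of $\mathcal{E}$ and $\sigma$ from $[P/(G/\langle g^{-1}_{\beta}\rangle)]$ — this is exactly where the hypothesis of~(1) is essential, since otherwise $E_{\geq 0}$ is a proper sub‑bundle of $\mathcal{E}$ and an extra Euler‑class factor (matching the $\tau_{b}$‑products in~\eqref{eq:mirror}) would appear. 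Second, that the fixed‑locus scheme structure on $AY^{ss}(\theta)^{g_{\beta}}$ agrees with the zero scheme of $\sigma$, which follows from the tangent‑sequence exactness in the proof of Lemma~\ref{lem:inertiacomp} together with smoothness of fixed loci of finite‑group actions. Third, that the base‑change and projection‑formula properties of localized top Chern classes are invoked in their $(G/\langle g^{-1}_{\beta}\rangle)$‑equivariant forms. Once these are in place, the remainder of the argument is formal.
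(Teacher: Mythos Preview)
Your argument is correct and is precisely the natural way to justify the corollary; the paper itself gives no proof, presenting the statement only as ``two implications of the above lemma'' (Lemma~\ref{lem:comp-graph}), so your write-up is a faithful expansion of what the author leaves implicit. The key identifications you flag --- that under hypothesis~(1) the bundle/section $(E_{\geq 0},s_{\geq 0})$ on $\bar Z_\beta$ is the restriction of the regular pair $(\mathcal E,\sigma)$ cutting out $N$ in $P$ via Lemma~\ref{lem:inertiacomp}, so that $s^{!}_{E_{\geq 0},\mathrm{loc}}$ agrees with the refined Gysin map $i_N^{!}$, and then the projection-formula step $i_{*}\,i_N^{!}[\bar Z_\beta]=i_N^{!}(j_{*}[\bar Z_\beta])$ --- are exactly the content the paper is suppressing.
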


\subsection{Two special cases of the mirror theorem}\label{subsec:special-case}
Using the above corollary, we get two
interesting special cases of the $I-$function. The first case is when $Y$ is a hypersurface with respect to a line bundle $L:=L_{\tau}$ for some
character $\tau$, the mirror formula \eqref{eq:mirror} becomes:
\begin{equation}
\begin{split}
  \mathbb I(q,t,z)&=\mathrm{exp}\big(\frac{1}{z}\sum_{i=1}^{l}t_{i}u_{i}(c_{1}(L_{\pi_j})+\beta(L_{\pi_j})z)\big)\\
  &\bigg(\sum_{\substack{\beta\in \operatorname{Eff(W,G,\theta)}\\ \beta(L)\geq 0}}q^{\beta}\frac{\prod_{\rho:\beta(L_{\rho})<
             0}\prod_{\beta(L_{\rho})\leq i<0}(D_{\rho}+(\beta(L_{\rho})-i)z)}{\prod_{\rho:\beta(L_{\rho})>
             0}\prod_{0\leq i<
             \beta(L_{\rho})}(D_{\rho}+(\beta(L_{\rho})-i)z)}\\
           & \times \prod_{0\leq i<
             \beta(L) }\big(c_1(L)+(\beta(L)-i)z\big)\mathbbm{1}_{g^{-1}_{\beta}}\\
&+\sum_{\substack{\beta\in \operatorname{Eff(W,G,\theta)}\\ \beta(L)\in \Z_{<0}}}q^{\beta}\frac{\prod_{\rho:\beta(L_{\rho})<
             0}\prod_{\beta(L_{\rho})< i<0}(D_{\rho}+(\beta(L_{\rho})-i)z)}{\prod_{\rho:\beta(L_{\rho})>
             0}\prod_{0\leq i<
             \beta(L_{\rho})}(D_{\rho}+(\beta(L_{\rho})-i)z)}\\
         & \times \prod_{\beta(L)< i<0}\frac{1}{\big(c_1(L)+(\beta(L)-i)z\big)}\big[[Y^{ss}_{\beta}/(G/\langle{g^{-1}_{\beta}}  \rangle)]\big]\\
&+\sum_{\substack{\beta\in \operatorname{Eff(W,G,\theta)}\\ \beta(L)\in \mathbb Q_{<0}\backslash\Z_{<0}}}q^{\beta}\frac{\prod_{\rho:\beta(L_{\rho})<
             0}\prod_{\beta(L_{\rho})\leq i<0}(D_{\rho}+(\beta(L_{\rho})-i)z)}{\prod_{\rho:\beta(L_{\rho})>
             0}\prod_{0\leq i<
             \beta(L_{\rho})}(D_{\rho}+(\beta(L_{\rho})-i)z)}\\
         & \times \prod_{\beta(L)< i<0}\frac{1}{\big(c_1(L)+(\beta(L)-i)z\big)}\mathbbm 1_{g_{\beta}^{-1}}\bigg)  \ .
\end{split}
\end{equation}

The second case is when all the line bundles $L_{\tau_{b}}$ are all semi-positive, the $I-$function specializes to:
 \begin{equation}
\begin{split} 
\mathbb I(q,t,z)&=\mathrm{exp}\big(\frac{1}{z}\sum_{i=1}^{l}t_{i}u_{i}(c_{1}(L_{\pi_j})+\beta(L_{\pi_j})z)\big)\\
  &\sum_{\beta\in \operatorname{Eff(W,G,\theta)}}q^{\beta}\frac{\prod_{\rho:\beta(L_{\rho})<
             0}\prod_{\beta(L_{\rho})\leq i<0}(D_{\rho}+(\beta(L_{\rho})-i)z)}{\prod_{\rho:\beta(L_{\rho})>
             0}\prod_{0\leq i<
             \beta(L_{\rho})}(D_{\rho}+(\beta(L_{\rho})-i)z)}\\
  &\times \prod_{b} \prod_{0\leq i<
             \beta(L_{\tau_{b}}) }\big(c_1(L_{\tau_{b}})+(\beta(L_{\tau_{b}})-i)z\big)\mathbbm{1}_{g^{-1}_{\beta}}     
\end{split}
\end{equation}

The above formula matches the formula for positive hypersurfaces in
toric stacks for which the convexity holds~\cite[\S 5]{coates14_some_applic_mirror_theor_toric_stack} and the formula for
a possibly ray divisor (given by a coordinate function corresponding
to the ray) of a toric stack for which the convexity may fail~\cite{Coates_2015,
  Cheong_2015}. See \S\ref{sec:corti-example} for a non-positive example where the convexity fails.

\section{Master space I}\label{sec:twistedgraphspace}
\subsection{Construction of master space I}\label{subsec:Construction of the
  master space}
In this section, we will construct a master space which is a root stack
modification of the twisted graph space considered
in~\cite{clader17_higher_genus_quasim_wall_cross_via_local}. Let $(AY,G,\theta)$
be the GIT data which gives rise to a hypersurface in the toric stack
$X=[W^{ss}(\theta)/ G]$ as in previous sections. \emph{Since a positive scaling
  of the stability character $\theta$ will not change the GIT quotient. Without loss of generality,
let's assume that the line bundle $L_{\theta}$ on $Y=[AY^{ss}(\theta)/G]$
is the pullback of a positive line bundle on the coarse moduli space
$\underline{Y}$ of $Y$.} First we will consider the following quotient stack
$$\P \mathfrak Y^{\frac{1}{r},p}=[(AY \!\times\! \C^{p} \!\times\!\C^2)/(G
\!\times\! (\C^{*})^{p}\!\times\!\C^{*})]$$
defined by the following (right) action
$$(\vec{x},\vec{y},z_{1},z_{2})\cdot (g,h,t)=(\vec{x}\cdot g,(h_{j}y_{j})_{j=1}^{p},\theta(g)^{-1}(\prod_{j=1}^{p}h_{j}^{-1})t^{r}z_{1},tz_{2})\ ,$$
where $(g,h=(h_{j})_{j=1}^{p},t)\in G \!\times\! (\C^{*})^{p} \!\times\!\C^{*}$
$(\vec{x},\vec{y}=(y_{j})_{j=1}^{p},z_{1},z_{2})\in AY \!\times\! \C^{p}\!\times\!
\C^{2}$. For simplicity, we will write $AY_{p}:=AY \!\times\! \C^{p}$, and
$G_{p}:=G \!\times\! (\C^{*})^{p}$ and $\theta_{p}$ as the character of $G_{p}$
defined by
$$\theta_{p}(g,h)=\theta(g)\prod_{j=1}^{p}h_{j}\; \text{for all}\; (g,h)\in G_{p}\ .$$ 


Fix a positive rational number $\epsilon\in \mathbb Q_{>0}\cap(0,1]$ and a tuple of
positive rational numbers $\boldsymbol{\varepsilon}=(\epsilon,\cdots,\epsilon)\in (\mathbb Q_{>0})^{p}$, we consider the
stability given by the rational character of $G_{p} \!\times\! \C^{*}$ defined by
$$\widetilde{\theta}(g,h,t)=\theta(g)^{\epsilon}(\prod_{i=1}^{p}h^{\epsilon}_{i})t^{3r}$$
for $(g,h,t)\in G_{p}\!\times\! \C^{*}$. Then the GIT stack quotient $[(AY_{p} \!\times\!
\C^{2})^{ss}(\widetilde{\theta})/(G_{p} \!\times\! \C^{*})]$ is the root stack of
the $\P^{1}-$bundle $\P_{Y}(\mathcal O(-D_{\theta})\oplus \mathcal O)$ over $Y$
by taking $r$-th root of the infinity divisor $D_{\infty}$ given by $z_{2}=0$.
We will denote the GIT stack quotient $[(AY_{p} \!\times\!
\C^2)^{ss}(\widetilde{\theta})/ (G_{p} \!\times\! \C^{*})]$ to be $\P
Y^{\frac{1}{r}}$, which is equipped with the infinity section $\mathcal
D_{\infty}$ given by $z_{2}=0$ and the zero section $\mathcal D_{0}$ given by
$z_{1}=0$. Note this GIT quotient is independent of the integer $p$ as the
semistable(=stable) loci $(AY_{p} \!\times\!
\C^2)^{ss}(\widetilde{\theta})=AY^{ss}(\theta) \!\times\! (\C^{*})^{p}
\!\times\! (\C^{2}\backslash \{0\})$. We will take $p=1$ as our standard GIT
reference for $\mathbb PY^{\frac{1}{r}}$, which will be canonically identified
with other GIT quotients from $\P \mathfrak Y^{\frac{1}{r},p}$ by choosing the
embedding $AY\subset AY_{p}$ as $AY\cong AY_{p}\cap \{y_{i}=1| i=1,\ldots,p \}$.

The inertia stack $I_{\mu} \P Y^{\frac{1}{r}} $ of $\P Y^{\frac{1}{r}}$ admits a
decomposition
$$I_{\mu} \P Y \bigsqcup \sqcup_{j=1}^{r-1}
\sqrt[r]{L_{\theta}/Y}\ .$$

Let $(\vec{x},(g,t))$ be a point of $I_{\mu}\P Y^{\frac{1}{r}}$, if
$(\vec{x},(g,t))$ appears in the first factor of the decomposition above, then
the automorphism $(g,t)$ lies in $G \!\times\! \{1\}$; if $(\vec{x},(g,t))$
occurs in the second factor of the decomposition above, the automorphism $(g,t)$
lies in $G \!\times\! \{\mu_{r}^{j}: 1\leq j\leq r-1\}\subset G \!\times\!
\boldsymbol{\mu}_{r}$, and the point $\vec{x}$ is in the infinity section
$\mathcal D_{\infty}$ defined by $z_{2}=0$. Here
$\mu_{r}=\mathrm{exp}(\frac{2\pi \sqrt{-1}}{r})\in \C^{*}$ and
$\boldsymbol{\mu}_{r}$ is the cyclic group generated by $\mu_{r}$.

For $(g,t)\in G \!\times\! \boldsymbol{\mu}_{r}$. we will use the notation
$\bar{I}_{(g,t)}\P Y^{\frac{1}{r}}$ to mean the rigidified inertia stack
component of $\bar{I}_{\mu}\P Y^{\frac{1}{r}} $ which has automorphism $(g,t)$.

Consider the moduli stack of $\widetilde{\theta}-$stable quasimaps to $\P
\mathfrak Y^{\frac{1}{r},p}$:
$$Q^{\widetilde{\theta}}_{0,m}(\P \mathfrak Y^{\frac{1}{r},p},(d,1^p,\frac{\delta}{r}))\ .$$

More concretely,
\[Q^{\widetilde{\theta}}_{0,m}(\P \mathfrak
  Y^{\frac{1}{r},p},(d,1^{p},\frac{\delta}{r})) = \{(C;q_1, \ldots, q_m;
  L_1,\cdots,L_{k+p}, N; \vec{x},\vec{y}, z_1, z_2)\},\] where $(C;
q_1, \ldots, q_m)$ is a $m$-pointed prestable balanced orbifold curve of genus
$0$ with possible nontrivial isotropy only at special points, i.e. marked gerbes
or nodes, the line bundles $(L_j:1\leq j\leq k+p)$ and $N$ are orbifold line
bundles on $C$ with
\begin{equation}
  \label{eq:TGSdegrees}
  \mathrm{deg}([\vec{x}])=d\in Hom(Pic(\mathfrak Y),\mathbb Q), \; \; \;  \mathrm{deg}(N)=\frac{\delta}{r}\ ,
\end{equation}
\begin{equation}
\mathrm{deg}(L_{k+j})=1,\; \; 1 \leq j\leq p\ , 
\end{equation}
and
\[(\vec{x},\vec{y},\vec{z}) := (x_1, \ldots, x_n,y_{1},\ldots,y_{p}, z_1, z_2) \in
  \Gamma\left(\bigoplus_{i=1}^{n} L_{\rho_{i}}\oplus \bigoplus_{j=1}^{p}L_{k+j} \oplus (L_{-\theta_{p}}\otimes
    N^{\otimes r}) \oplus N\right).\] Here, for $1\leq i\leq n$, the line bundle
$L_{\rho_{i}}$ is equal to
$$\otimes_{j=1}^{k}L_{j}^{m_{ij}}\ ,$$
where $(m_{ij})$ ($1\leq i\leq n$,$1\leq j\leq k+p$) is given by the relation
$\rho_{i}=\sum _{j=1}^{k}m_{ij}\pi_{j}$. The same construction applies to the
line bundle $L_{-\theta_{p}}$ on $C$. Note here $\delta$ is an integer when
$Q^{\widetilde{\theta}}_{0,m}(\P \mathfrak
Y^{\frac{1}{r},p},(d,1^{p},\frac{\delta}{r}))$ is nonempty as $N^{\otimes r}$ is the
pull-back of some line bundle on the coarse moduli curve $\underline{C}$.

We require the the following conditions are satisfied for the above data:
\begin{itemize}
\item {\it Representability}: For every $q \in C$ with isotropy group $G_q$, the
  homomorphism $\mathbb BG_q \rightarrow \mathbb B(G_{p}\times \C^{*})$ induced by
  the restriction of line bundles $(L_{j}:1\leq j\leq k+p)$ and $N$ to $q$ is
  representable, Note the image of the homomorphism lies in the subgroup $G\!\times\! \C^{*}\subset
  G_{p}\!\times\! \C^{*}$.
\item {\it Nondegeneracy}: The sections $z_1$ and $z_2$ never simultaneously
  vanish. Furthermore, for each point $q$ of $C$ at which $z_2(q) \neq 0$, the
  stability condition \ref{rmk:theta-quasimap}
  $$l_{\widetilde{\theta}}(q)\leq 1$$
  for $\widetilde{\theta}$-stable map to $\P \mathfrak Y^{\frac{1}{r},p}$
  becomes the stability condition
  \begin{equation}
    \label{eq:stab1}
    l_{\epsilon\theta\bigoplus \oplus_{j=1}^{p}\epsilon\mathrm{id}_{\C^{*}}}(q)\leq 1,
  \end{equation}
  for the prestable quasimap $[\vec{x},\vec{y}]:C\rightarrow \mathfrak
  Y\!\times\! [\C/\C^{*}]^{p}$. For each
  point $q$ of $C$ at which $z_2(q) = 0$, we have
  \begin{equation}
    \label{eq:stab2}
    \text{ord}_q(\vec{x})=\text{ord}_{q}(\vec{y}) = 0.
  \end{equation}
  We note that this can be phrased as the length condition
  \eqref{eq:quasi-stablity} bounding the order of contact of $(\vec{x},\vec{y}, \vec{z})$ with the unstable loci of $\P \mathfrak Y^{\frac{1}{r},p}$ as in
  \cite[\S 2.1]{ciocan-fontanine2016}.
\item {\it Stability}: The $\mathbb Q-$line bundle
  $$(\phi_{*}(L_{\theta}))^{\otimes \epsilon}\otimes \bigotimes_{j=1}^{p}\phi_{*}(L_{k+j})^{\otimes \epsilon}\otimes \phi_{*}(N^{\otimes 3r})\otimes \omega_{\underline{C}}^{log}$$
  on the coarse curve $\underline{C}$ is ample. Here $\phi:C\rightarrow
  \underline{C}$ is the coarse moduli map. Note here, the line bundles
  $L_{\theta}$, $(L_{k+j})_{j=1}^{p}$ and $N^{\otimes 3r}$ are the pull back of line bundles on the coarse moduli of $\underline{C}$.
\item{\it Vanishing}: The image of $[\vec{x}]:C\rightarrow \mathfrak X$ lies in
  $\mathfrak Y$.
\end{itemize}
Let $\vec{m}=(v_{1},\cdots,v_{m})\in (G \!\times\! \boldsymbol{\mu}_{r})^{m}$,
we will denote $Q^{\widetilde{\theta}}_{0,\vec{m}}(\P \mathfrak
Y^{\frac{1}{r},p},(d,1^p,\frac{\delta}{r}))$ to be:
$$Q^{\widetilde{\theta}}_{0,m}(\P \mathfrak
Y^{\frac{1}{r},p},(d,1^p,\frac{\delta}{r}))\cap ev_{1}^{-1}(\bar{I}_{v_{1}}\P
Y^{\frac{1}{r}})\cap \cdots \cap ev_{m}^{-1}(\bar{I}_{v_{m}}\P Y^{\frac{1}{r}} )
\ ,$$
where
\begin{equation*}
  ev_i: Q^{\widetilde{\theta}}_{0,\vec{m}}(\P \mathfrak Y^{\frac{1}{r},p},(\beta,1^p,\frac{\delta}{r}))\rightarrow \bar{I}_{\mu}\P Y^{\frac{1}{r}}
\end{equation*}
are natural evaluation maps as before, by evaluating the sections
$(\vec{x},\vec{z})$ at $i$th marking $q_i$. Using the vanishing loci of the section $y_{j}$ of the
degree one line bundle $L_{k+j}$ for $1\leq j\leq p$, which corresponds to a
smooth non-orbifold point on $C$, one has another tuple of
evaluation maps 
\begin{equation}
  \label{eq:hatev}
  \hat{ev}_{j}: Q^{\widetilde{\theta}}_{0,\vec{m}}(\P \mathfrak
Y^{\frac{1}{r},p},(\beta,1^p,\frac{\delta}{r}))\rightarrow
\mathfrak Y  \ ,
\end{equation}
for $1\leq j\leq p$.

Because $Q^{\widetilde{\theta}}_{0,\vec{m}}(\P \mathfrak
Y^{\frac{1}{r},p},(\beta,1^{p},\frac{\delta}{r}))$ is a moduli space of stable quasimaps to
a proper lci GIT quotient, it is
a proper Deligne-Mumford stack equipped with a natural perfect obstruction
theory relative to the stack $\mathfrak M^{tw}_{0,m}$ of
twisted curves by~\cite{Ciocan_Fontanine_2014}. This obstruction theory has the form
\begin{equation}
  \label{pot}
  \mathbb E:=R^{\bullet}\pi_{*}(f^*\mathbb{T}_{\P \mathfrak Y^{\frac{1}{r},p}})\ .
\end{equation}
Here, we denote the universal family over $Q^{\widetilde{\theta}}_{0,\vec{m}}(\P
\mathfrak Y^{\frac{1}{r},p},(\beta,1^{p},\frac{\delta}{r}))$ by
\[\xymatrix{
    \mathcal{C} \ar^{\pi}[d]\ar^{f}[r] &\P \mathfrak Y^{\frac{1}{r},p}\\
     Q^{\widetilde{\theta}}_{0,\vec{m}}(\P \mathfrak
     Y^{\frac{1}{r},p},(\beta,1^{p},\frac{\delta}{r}))\ . }\]
The obstruction theory \eqref{pot} can written as cone of the morphism of complexes
\begin{equation}\label{eq:fac-pot}
R^{\bullet}\pi_{*}(\mathcal O_{\mathcal C}\otimes \mathfrak g_{r,p})\rightarrow
R^{\bullet}\pi_{*}\big(\mathcal V\oplus(\oplus_{j=1}^{p}\mathcal L_{k+j})\oplus (\mathcal L_{-\theta_{p}}\otimes\mathcal N^{\otimes r})\oplus \mathcal N\big)\ .
\end{equation}
induced from the distinguished triangle of the tangent complex $\mathbb T_{\P
  \mathfrak Y^{\frac{1}{r},p}}$ of $\P \mathfrak Y^{\frac{1}{r},p}$ 
$$\mathfrak g_{r,p}\times_{G}\mathcal O_{W_{r,p}}\rightarrow
W_{r,p}\times_{G}\mathbb T_{W_{r,p}}\rightarrow
\mathbb T_{\P \mathfrak Y^{\frac{1}{r},p}}\ .$$
Here we use the GIT representation $\P \mathfrak
Y^{\frac{1}{r},p}=[W^{r,p}/G^{r,p}]$ as constructed above. Here $\mathcal L_{\rho_{i}}$ ($1\leq i\leq n$), $\mathcal
L_{j}$ ($1\leq j\leq k$) and $\mathcal N$ are the
universal line bundles and
\begin{equation*}
  \mathcal{V} \subset \oplus_{i=1}^{n}\mathcal L_{\rho_{i}} 
\end{equation*}
is the subsheaf of sections taking values in the affine cone of $Y$. Somewhat
more explicitly, the sub-obstruction-theory
$\mathbb{E}_{\mathrm{rel}}:=R^{\bullet}\pi_{*}(\mathcal V)$ comes from the deformations and
obstructions of the sections $\vec{x}$, and $\mathbb E_{\mathrm{rel}}$ fits into the following
distinguished triangle:
\begin{equation}\label{eq:rel-pot}
  \xymatrix{
\mathbb E_{\mathrm{rel}} \ar[r] &R^{\bullet}\pi_{*}(\oplus_{i=1}^{n}\mathcal
L_{\rho_{i}})\ar[r]^{ds} &R^{\bullet}\pi_{*}(\oplus_{b=1}^{c} \mathcal
L_{\tau_{b}})\ar[r] &\ . }
\end{equation}
We note we can interpret $R^{\bullet}\pi_{*}(\mathcal O_{\mathcal C}\otimes
\mathfrak g_{r,p})$ as the deformation theory of line bundles
$(L_{j})_{j=1}^{k+p}$ and $N$, and interpret
$R^{\bullet}\pi_{*}\big(\oplus_{j=1}^{p}\mathcal L_{k+j}\oplus (\mathcal
L_{-\theta_{p}} \otimes\mathcal N^{\otimes r})\oplus \mathcal N\big)$ as the
deformation theory of sections $\vec{y}$ and $z_{1},z_{2}$.
\subsection{$\C^{*}$-action and fixed loci}\label{subsec:fixed-loci1}
Consider the (left) $\C^{*}$-action on $AY_{p} \!\times\! \C^{2}$ defined by:
$$\lambda(\vec{x},\vec{y},z_{1},z_{2})=(\vec{x},\vec{y},\lambda z_{1},z_{2})\ ,$$
this action descents to be an action on $\P \mathfrak Y^{\frac{1}{r},p}$. We
will denote $\lambda$ to be the equivariant class corresponding to the
$\C^{*}$-action of weight 1. Let's first state a criteria for a morphism to $\P
\mathfrak Y^{\frac{1}{r},p}$ to be $\C^{*}$-equivariant (see also~\cite[\S
2.2]{chang16_effec_theor_gw_fjrw_invar}), which will be important in the
analysis of localization computations.

\begin{remark}{(\it Equivariant morphism to $\P \mathfrak Y^{\frac{1}{r},p}$)}\label{rmk:equivmap}
  Fix a stack $S$ over $Spec(\mathbb C)$ with a left $\C^{*}$-action, then a
  $\C^{*}$-equivariant morphism from $S$ to $\P \mathfrak Y^{\frac{1}{r},p}$ is
  equivalent to the following data: there exists $k+p+1$ $\C^{*}$-equivariant
  line bundles on $S$
  $$L_{1},\cdots, L_{k+p},N$$
  together with $\C^{*}$-invariant sections
\begin{align*}
  (\vec{x},\vec{y}, \vec{z}) &:= (x_1, \ldots, x_n,y_{n+1},\ldots,y_{n+p}, z_1,
                               z_2)\\
  &\in \Gamma\left(\oplus_{i=1}^{n} L_{\rho_{i}}\oplus(\oplus_{j=1}^{p}L_{k+j}) \oplus (L_{-\theta_{p}}\otimes N^{\otimes
  r}\otimes \C_{\lambda}) \oplus N\right)^{\C^{*}}.
\end{align*}
 Here $L_{\rho_{i}}$ $(1\leq i\leq n)$ and $L_{-\theta_{p}}$ are constructed from $(L_{j})_{1\leq j\leq
    k+p}$ as explained before, $\mathbb C_{\lambda}$ is the trivial line bundle
  over $S$ with $\C^{*}-$linearization of weight 1. These sections should also
  satisfy the vanishing condition imposed by the cone of $Y$ as above.
\end{remark}

Fix a nonzero degree $\beta\in \mathrm{Eff}(W,G,\theta)$ and a tuple of
nonnegative integers $(\delta_{1},\cdots,\delta_{m})\in \mathbb N^{m}$. Consider
the tuple of multiplicities $\vec{m}=(v_{1},\cdots,v_{m})\in (G\!\times\!
\boldsymbol{\mu}_{r})^{m}$, where $v_{i}=(g_{i},\mu_{r}^{\delta_{i}})$, we will
denote $Q^{\widetilde{\theta}}_{0,\vec{m}}(\P \mathfrak
Y^{\frac{1}{r},p},(\beta,\frac{\delta}{r}))$ to be
$$\bigsqcup_{\substack{d\in \mathrm{Eff}(AY,G,\theta)\\ (i_{\mathfrak Y})_{*}(d)=\beta }}Q^{\widetilde{\theta}}_{0,\vec{m}}(\P \mathfrak
Y^{\frac{1}{r},p},(d,1^{p},\frac{\delta}{r}))\ ,$$ where $i_{\mathfrak Y}:
\mathfrak Y\rightarrow \mathfrak X$ is the inclusion morphism. Thus
$Q^{\widetilde{\theta}}_{0,\vec{m}}(\P \mathfrak
Y^{\frac{1}{r},p},(\beta,1^{p},\frac{\delta}{r}))$ inherits a $\C^{*}$-action as
above.

Follow the presentation of~\cite{clader17_higher_genus_wall_cross_gauged,
  clader17_higher_genus_quasim_wall_cross_via_local}, we can index the
components of $\C^{*}-$fixed loci of $Q^{\widetilde{\theta}}_{0,\vec{m}}(\P
\mathfrak Y^{\frac{1}{r}},(\beta,1^{p},\frac{\delta}{r}))$ by decorated graphs. A
decorated graph $\Gamma$ consists of vertices, edges, and $m$ legs, and we
decorate it as follows:
\begin{itemize}
\item Each vertex $v$ is associated with an index $j(v) \in \{0, \infty\}$, a
  degree $\beta(v) \in \mathrm{Eff}(W,G,\theta)$ and a subset $J_{v}\in \{1,\cdots,p\}$.
\item Each edge $e=\{h,h'\}$ is equipped with a degree $\beta(e)\in
  \mathrm{Eff}(W,G,\theta)$, a subset $J_{e}\subset \{1,\cdots,p\}$ and $\delta(e) \in \mathbb{N}$.
\item Each half-edge $h$ and each leg $l$ has an element (called multiplicity)
  $m(h)$ or $m(l)$ in $G \!\times\! \boldsymbol{\mu}_{r}$.
\item The legs are labeled with the numbers $\{1, \ldots, m\}$.
\end{itemize}
By the ``valence" of a vertex $v$, denoted $\text{val}(v)$, we mean the total
number of incident half-edges, including legs.

The fixed locus in $Q^{\widetilde{\theta}}_{0,\vec{m}}(\P \mathfrak
Y^{\frac{1}{r},p},(\beta,1^{p},\frac{\delta}{r}))$ indexed by the decorated graph
$\Gamma$ parameterizes quasimaps of the following type:
\begin{itemize}
\item Each edge $e$ corresponds to a genus-zero component $C_e$ on which
  $\deg(N|_{C_{e}}) = \frac{\delta(e)}{r}$ ($\delta(e)>0$), 
  $deg(L_{j}|_{C_{e}})=\beta(e)(L_{\pi_{j}})$ ($1\leq j\leq k$), and
  $deg(L_{k+j}|_{C_{e}})=1$ if and only if $j\in J_{e}$. We
  denote $1^{J_{e}}$ to be the degree coming from the lines bundles
  $(L_{k+j}:1\leq j\leq p)$. There are two distinguished
  points $q_{0}$ and $q_{\infty}$ on $C_{e}$ such that $q_{\infty}$ is the only
  point on $C_{e}$ at which $z_{2}$ vanishes, and $q_{0}$ is the only point on
  $C_{e}$ determined by the following conditions:
  \begin{itemize}
  \item if $C_{e}$ has base points, $q_{0}$ is the only base point on $C_{e}$;
  \item if $C_{e}$ does not have base points on it, $q_{0}$ is the only point on
    $C_{e}$ at which $z_{1}$ vanishes.
  \end{itemize}
  We call them the $\text{ramification points}$\footnote{The definition of the
    ramification point here is different from the definition in \cite[Page
    13]{clader17_higher_genus_quasim_wall_cross_via_local}, where they claim
    that $z_{1}$ or $z_{2}$ each vanish at exactly one point on $C_{e}$. We find
    that there is a missing case when $q_{0}$ is a base point and
    $deg(L_1|_{C_{e}})=deg{L_{2}|_{C_{e}}}=\delta(e)$ in their setting, then
    $z_{1}|_{C_{e}}\equiv 1$, which does not vanish anywhere on $C_{e}$. But the
    author finds this missing case does not affect their main result
    in~\cite{clader17_higher_genus_quasim_wall_cross_via_local}.},
  and all of degree $(\beta(e),1^{J_{e}})$ is concentrated at the ramification point
  $q_{0}$. That is,
  \[\text{when} \; x_{i}|_{C_{e}}\neq
    0,\; \text{we have}\;
    \text{ord}_{q_{0}}(x_{i})=\beta(e)(L_{\rho_{i}}),\; \text{and}\;
    \text{ord}_{q_{0}}(y_{i})=1, \text{only when}\; j\in J_{e}\
    .\]
\item Each vertex $v$ for which $j(v) = 0$ (with unstable exceptional cases
  noted below) corresponds to a maximal sub-curve $C_v$ of $C$ over which $z_1
  \equiv 0$, and each vertex $v$ for which $j(v) = \infty$ (again with unstable
  exceptions) corresponds to a maximal sub-curve over which $z_2 \equiv 0$. The
  label $\beta(v)$ denotes the degree coming from the restriction map
  $[\vec{x}]|_{C_{v}}$, note here we count the degree $\beta(v)$ in
  $\mathrm{Eff}(W,G,\theta)$, but not in $\mathrm{Eff}(AY,G,\theta)$. The subset
  $J_{v}$ is equal to the set where $deg(L_{k+j}|_{C_{v}})=1$ for $1\leq j\leq
  p$. We
  denote $1^{J_{e}}$ to be the degree coming from the lines bundles $(L_{k+j}|_{C_{v}})_{j=1}^{p}$.
\item A vertex $v$ is {\it unstable} if stable quasimap of the type described
  above do not exist (where, as always, we interpret legs as marked points and
  half-edges as half-nodes). In this case, $v$ corresponds to a single point of
  the component $C_e$ for each adjacent edge $e$, which may be a node at which
  $C_e$ meets $C_{e'}$, a marked point of $C_e$, an unmarked point, or a
  basepoint on $C_e$ of order $\beta(v)$, note the base point only appears as a
  vertex over $0$ due to the nondegeneracy condition.
\item The index $m(l)$ on a leg $l$ indicates the rigidified inertia stack
  component $\bar{I}_{m(l)}\P Y^{\frac{1}{r}}$ of $\P Y^{\frac{1}{r}}$ on which
  the marked point corresponding to the leg $l$ is evaluated, this is determined
  by the multiplicity of $L_{1},\cdots, L_{k}, N$ at the corresponding marked
  point.
\item A half-edge $h$ incident to a vertex $v$ corresponds to a node at which
  components $C_e$ and $C_v$ meet, and $m(h)$ indicates the rigidified inertia
  component $\bar{I}_{m(h)}\P Y^{\frac{1}{r}}$ of $\P Y^{\frac{1}{r}}$ on which
  the node on $C_{v}$ corresponding to $h$ is evaluated. If $v$ is unstable,
  then $h$ corresponds to a single point on a component $C_e$, then $m(h)$ is
  the {\it inverse} in $G\times \boldsymbol{\mu}_{r}$ of the multiplicity of
  $L_1,\cdots, L_{k},N$ at this point.
\end{itemize}
In particular, we note that the decorations at each stable vertex $v$ yield a
tuple
$$\vec{m}(v) \in (G\times \boldsymbol{\mu}_{r})^{\text{val}(v)}$$
recording the multiplicities of $L_1,\cdots, L_{k},N$ at every special point of
$C_v$. We have the following remarks:
\begin{remark}\label{rmk:quasivertex}
  The crucial observation, now, is the following. For a stable vertex $v$ such
  that $j(v) = 0$, we have $z_1|_{C_v} \equiv 0$, so the stability condition
  \eqref{eq:stab1} implies that $l_{\epsilon\theta\oplus
    \bigoplus_{j=1}^{p}\epsilon\mathrm{id}_{\C^{*}}}(q)\leq 1$ for each $q \in
  C_v$. That is, the restriction of $(C; q_1, \ldots, q_m;
  L_1,\cdots,L_{k+p};\vec{x},\vec{y})$ to $C_v$ gives rise to a $\epsilon\theta\oplus \bigoplus_{j=1}^{p}\epsilon\mathrm{id}_{\C^{*}}$-stable
  quasimap to the quotient stack $\mathfrak Y_{p}:=[AY/G] \times [\C/\C^{*}]^{p} $ (c.f. \ref{def:theta-ep-quasi}) in
  $$Q^{(\epsilon\theta,\boldsymbol{\varepsilon})}_{0,\vec{m}(v)||J_{v}|}(\mathfrak
  Y_{p},(\beta(v),1^{J_{v}})):=\bigsqcup_{\substack{d\in
      \mathrm{Eff}(AY,G,\theta)\\(i_{\mathfrak
        Y})_{*}(d)=\beta(v)}}Q^{(\epsilon\theta,\boldsymbol{\varepsilon})}_{0,\vec{m}(v)||J_{v}|}(\mathfrak Y_{p},(d,1^{J_{v}}))\
  .$$
  In this case, let $j\in J_{v}$, the evaluation
  map considered in \eqref{eq:hatev} coincides with $\hat{ev}_{j}$ for $Q^{(\epsilon\theta,\boldsymbol{\varepsilon})}_{0,\vec{m}(v)||J_{v}|}(\mathfrak
  Y_{p},(\beta(v),1^{J_{v}}))$ in Remark \ref{rmk:equiv-theta-epl}.  
  On the other hand, for a stable vertex $v$ such that $j(v) = \infty$, we
  have $z_2|_{C_v} \equiv 0$, so the stability condition \eqref{eq:stab2}
  implies that $\text{ord}_q(\vec{x}) =\text{ord}_{q}(\vec{y})=0$ for each $q \in C_v$. Thus, the
  restriction of $(C; q_1, \ldots, q_m; L_1,\cdots,L_{k};\vec{x})$ to $C_v$
  gives rise to a usual twisted stable map in
  $$\mathcal
  K_{0,\vec{m}(v)}(\sqrt[r]{L_{\theta}/Y},\beta(v)):=\bigsqcup_{\substack{d\in
      \mathrm{Eff}(AY,G,\theta)\\(i_{\mathfrak Y})_{*}(d)=\beta(v)}}\mathcal
  K_{0,\vec{m}(v)}(\sqrt[r]{L_{\theta}/Y},d)\ .$$ Here $\sqrt[r]{L_{\theta}/Y}$
  is the root gerbe of $Y$ by taking $r$-th root of $L_{\theta}$.
\end{remark}

\begin{remark}\label{rmk:edge1}
  For each edge $e$, the restriction of $(\vec{x},\vec{y})$ to $C_e$ defines a constant
  map to $Y$ (possibly with an additional basepoint at the ramification point
  $q_{0}$). So if there is no basepoint on $C_{e}$, $(\vec{x},\vec{y},\vec{z})$ defines
  a representable map
  $$C_{e}\rightarrow \mathbb B G_{y}\times \P_{r,1}$$
  where $y\in Y$ comes from $\vec{x}$, $G_{y}$ is the isotropy group of $y\in
  Y$. Then we have $m(q_{0})=(g^{-1},1)$ and
  $m(q_{\infty})=(g,\mu_{r}^{\delta(e)})$ for some $g\in G_{y}$. Note when $r$
  is a sufficiently large prime comparing to $\delta(e)$, assuming that the
  order of $g$ is equal to $a$, we have $C_{e}\cong \P^{1}_{ar,a}$ and the
  ramification point $q_{\infty}$ must be a special point. Here $\P^{1}_{ar,a}$
  is the unique Deligne-Mumford stack with coarse moduli $\P^{1}$, isotropy
  group $\boldsymbol{\mu}_{a}$ at $0\in \P^{1}$, isotropy group
  $\boldsymbol{\mu}_{ar}$ at $\infty\in \P^{1}$, and generic trivial stabilizer.

  If $q_{0}$ is a basepoint of degree $(\beta,1^{J_{e}})$, the ramification point $q_{0}$
  can't be an orbifold point, thus $m(q_{0})=(1,1)\in G\!\times\!
  \boldsymbol{\mu}_{r}$. In this case, by the representable condition, we have
  $C_{e}\cong \P_{ar,1}$ and $m(q_{\infty})=(g_{\beta},\mu_{r}^{\delta(e)})$ if
  $r$ is a sufficiently large prime. Here $a$ is minimal positive integer
  associated to $g_{\beta}$ as in \S \ref{sec:I-fun}.
\end{remark}
\begin{remark}\label{rmk:edge2}
  If there is a basepoint on the edge curve $C_{e}$, then the degree
  $(\beta(e),1^{J_{e}}, \frac{\delta(e)}{r})$ on $C_{e}$ must satisfy the relation
  $\delta(e)\geq \beta(e)(L_{\theta })+|J_e|$. Otherwise we have $z_{1}|_{C_{e}}\equiv
  0$, given the fact $z_{2}$ vanishes at $q_{\infty}$, this will violate the
  nondegeneracy condition for $z_{1}$ and $z_{2}$.
\end{remark}

\subsection{Localization analysis}\label{subsec:local-ana1}
Fix $\beta\in \mathrm{Eff}(W,G,\theta)$ and $\delta\in \Z_{\geq 0}$, we will consider
the space $Q^{\widetilde{\theta}}_{0,\vec{m}}(\P \mathfrak
Y^{\frac{1}{r},p},(\beta,1^{p},\frac{\delta}{r}))$. The reason why we assume that the
second degree is $\frac{\delta}{r}$ is that
$Q^{\widetilde{\theta}}_{0,\vec{m}}(\P \mathfrak
Y^{\frac{1}{r},p},(\beta,1^{p},\frac{\delta}{r}))$ corresponds to
$Q^{\widetilde{\theta}}_{0,\vec{m}}(\P \mathfrak Y,(\beta,\delta)) $, here $\P
\mathfrak Y$ is equal to $\P \mathfrak Y^{\frac{1}{r},p}$ for $r=1$ and $p=1$. In the
remaining section, we will always assume that $r$ is a \emph{sufficiently large prime}.

By virtual localization formula of Graber--Pandharipande~\cite{Graber_1999}, we
can write
$$[Q^{\widetilde{\theta}}_{0,\vec{m}}(\P \mathfrak
Y^{\frac{1}{r},p},(\beta,1^{p},\frac{\delta}{r}))]^{ \mathrm{vir}}\ ,$$
in terms of contributions from each decorated graph $\Gamma$:
\begin{equation}
  \label{localization}
  [Q^{\widetilde{\theta}}_{0,\vec{m}}(\P \mathfrak Y^{\frac{1}{r},p},(\beta,1^{p},\frac{\delta}{r}))]^{\mathrm{vir}} = \sum_{\Gamma} \frac{1}{\mathbb A_{\Gamma}}\iota_{\Gamma*} \left(\frac{[F_{\Gamma}]^{\mathrm{vir}}}{e^{\C^{*}}(N_{\Gamma}^{\mathrm{vir}})}\right)\ .
\end{equation}
Here, for each graph $\Gamma$, $[F_{\Gamma}]^{\mathrm{vir}}$ is obtained via the $\C^*$-fixed part of the
restriction to the fixed loci of the obstruction theory on
$Q^{\widetilde{\theta}}_{0,\vec{m}}(\P \mathfrak
Y^{\frac{1}{r},p},(\beta,1^{p},\frac{\delta}{r}))$, and $N_{\Gamma}^{\mathrm{vir} }$ is
the equivariant Euler class of the $\C^*$-moving part of this restriction. Besides, $\mathbb
A_{\Gamma}$ is the automorphism factor for the graph $\Gamma$, which represents
the degree of $F_{\Gamma}$ into the corresponding open and closed $\C^{*}$-fixed
substack in $Q^{\widetilde{\theta}}_{0,\vec{m}}(\P \mathfrak
Y^{\frac{1}{r},p},(\beta,1^{p},\frac{\delta}{r}))$.

We will do an explicit computation for the contributions of each graph
$\Gamma$ in the following. As for the contribution of a graph $\Gamma$ to
\eqref{localization}, one can first apply the normalization exact sequence to
the relative obstruction theory \eqref{pot} and \eqref{eq:fac-pot}, which decomposes the contribution from
$\Gamma$ to \eqref{localization} into contributions from vertex, edge, and node
factors. This includes all but the automorphisms and deformations within
$\mathcal M^{tw}_{0,\vec{m}}$. The latter are distributed in the
vertex, edge, and node factors as deformations of
the vertex components, deformations of the edge
components, and deformations of smoothing the nodes,
respectively. We also include automorphisms of the source curve in
the edge contributions as part of gerbe structure of the edge moduli
$\mathcal M_{e}$, then an additional factor from the gerbe structure of each edge
moduli will be included in the automorphism factor $\mathbb A_{\Gamma}$ (see \eqref{eq:auto-loc1} for the localization
contribution of graph $\Gamma$).
\subsubsection{Vertex contributions}\label{subsubsec:ver-contr1}
First of all, consider the stable vertex $v$ over $\infty$, this vertex moduli
$\mathcal M_{v}$ corresponds to the moduli stack
$$\mathcal K_{0,\vec{m}(v)}(\sqrt[r]{L_{\theta}/Y},\beta(v)):=\bigsqcup_{\substack{d\in
    \mathrm{Eff}(AY,G,\theta)\\(i_{\mathfrak Y})_{*}(d)=\beta(v)}}\mathcal
K_{0,\vec{m}(v)}(\sqrt[r]{L_{\theta}/Y},d)\ ,$$ which parameterizes twisted
stable maps to the root gerbe $\sqrt[r]{L_{\theta}/Y}$ over $Y$.

Let $$\pi:\mathcal C_{\infty}\rightarrow \mathcal
K_{0,\vec{m}(v)}(\sqrt[r]{L_{\theta}/Y},\beta(v))$$ be the universal curve over
$\mathcal K_{0,\vec{m}(v)}(\sqrt[r]{L_{\theta}/Y},\beta(v))$. In this case, on
$\mathcal C_{\infty}$, we have $\mathcal L_{-\theta }\otimes \mathcal
N^{\otimes r}\otimes \mathbb C_{\lambda}\cong \mathcal O_{\mathcal C_{\infty}}$
as $z_{1}|_{\mathcal C_{\infty}}\equiv 1$, hence we have $\mathcal N\cong
\mathcal L_{\theta}^{\frac{1}{r}}\otimes \C_{-\frac{\lambda}{r}}$, here
$\mathcal L_{\theta}^{\frac{1}{r}}$ is the line bundle over $\mathcal
C_{\infty}$ that is the pull back of the universal root bundle over
$\sqrt[r]{L_{\theta}/Y}$ along the universal map $f:\mathcal
C_{\infty}\rightarrow \sqrt[r]{L_{\theta}/Y}$. The movable part of the perfect
obstruction theory comes from the deformation of $z_{2}$, thus the \emph{inverse
  of Euler class} of the virtual normal bundle is equal to
\begin{equation*}
  e^{\C^{*}}((-R^{\bullet}\pi_{*}\mathcal L_{\theta}^{\frac{1}{r}})\otimes \C_{-\frac{\lambda}{r}}).
\end{equation*}
When $r$ is a sufficiently large prime,
following~\cite{janda18_doubl_ramif_cycles_with_target_variet}, the above Euler
class has a representation
$$\sum_{d\geq 0}c_{d}(-R^{\bullet}\pi_{*}\mathcal
L_{\theta}^{\frac{1}{r}})(\frac{-\lambda}{r})^{|E(v)|-1-d} \ .$$ Here the
virtual bundle $-R^{\bullet}\pi_{*}\mathcal L_{\theta}^{\frac{1}{r}}$ has virtual rank $|E(v)|-1$, where $|E(v)|$ is the number of edges incident to
the vertex $v$. The fixed part of
the perfecct obstruction theory contributes to the virtual cycle
$$[\mathcal K_{0,\vec{m}(v)}(\sqrt[r]{L_{\theta}/Y},\beta(v))]^{\mathrm{vir}}\ .$$

For the stable vertex $v$ over $0$, the vertex moduli $\mathcal M_{v}$ corresponds
to the moduli space
$$Q^{\epsilon\theta, \boldsymbol{\varepsilon}}_{0,\vec{m}(v)||J_{v}|}(\mathfrak Y,(\beta(v),1^{J_{v}})):=\bigsqcup_{\substack{d\in
    \mathrm{Eff}(AY,G,\theta)\\(i_{\mathfrak
      Y})_{*}(d)=\beta(v)}}Q^{\epsilon\theta,\boldsymbol{\varepsilon} }_{0,\vec{m}(v)||J_{v}|}(\mathfrak Y,(d,1^{J_{v}}))\
.$$

Let $\pi:\mathcal C_{0}\rightarrow Q^{\epsilon\theta,\boldsymbol{\varepsilon}}_{0,\vec{m}(v)||J_{v}|}(\mathfrak
Y,(\beta(v),1^{J_{v}}))$ be the universal curve over
$Q^{\epsilon\theta,\boldsymbol{\varepsilon}}_{0,\vec{m}(v)||J_{v}|}(\mathfrak Y,(\beta,1^{J_{v}}))$. In this case, the fixed
part of the obstruction theory of the vertex moduli over $0$ yields the
virtual cycle
$$[Q^{\epsilon\theta,\boldsymbol{\varepsilon}}_{0,\vec{m}(v)||J_{v}|}(\mathfrak Y,(\beta(v),1^{J_{v}}))]^{\mathrm{vir}}\ .$$
Note $\mathcal N|_{\mathcal C_{0}}=\mathcal O_{\mathcal C_{0}}$ as
$z_{2}|_{\mathcal C_{0}}\equiv 1$, therefore the virtual normal comes from the
movable part of the infinitesimal deformations of the section $z_{1}$, which is
a section of the line bundle $\mathcal L_{-\theta_{p}}$ over $\mathcal C_{0}$,
whose Euler class is equal to
$$e^{\C^{*}}((R^{\bullet}\pi_{*}\mathcal L_{-\theta_{p}})\otimes \C_{\lambda})\ .$$
\subsubsection{Edge contributions: basepoint case}\label{subsubsec:edgebase}
When there is a base point on the edge curve, it has degree
$(\beta(e),1^{J_{e}},\frac{\delta(e)}{r})$ with $\beta(e)\neq 0$ and $\delta(e)\geq
\beta(e)(L_{\theta })+|J_{e}|$ by Remark \ref{rmk:edge2}, we will write
$\beta(e)$ as $\beta$ only in this subsection for simplicity unless stated otherwise.
Then the multiplicity at $q_{\infty}\in C_{e}$ is equal to
$(g,\mu_{r}^{\delta(e)})\in G \!\times\! \boldsymbol{\mu}_{r}$, where
$g=g_{\beta}$ is defined in \S\ref{sec:I-fun}. Let $a$ be the
minimal positive integer associated to $\beta$ as in \S\ref{sec:I-fun},
which is also the order of $g_{\beta}$. When $r$ is sufficiently large, due to
Remark \ref{rmk:edge1}, $C_{e}$ must be isomorphic to $\P^{1}_{ar,1}$ where the
ramification point $q_{0}$ for which $z_{1}=0$ is an ordinary point, and the
ramification point $q_{\infty}$ for which $z_{2}=0$ must be a special point,
which is isomorphic to $\mathbb B\boldsymbol{\mu}_{ar}$.

Recall that
$$[Y^{ss}_{\beta}/G]\cong[(Z^{ss}_{\beta}\cap AY)/G]$$
in \S\ref{sec:I-fun}. We now define the edge moduli $\mathcal M_{e}$
to be
$$\sqrt[a\delta(e)]{L_{-\theta}/[Y^{ss}_{\beta}/G]}\ ,$$
which is the root gerbe over the stack $[Y^{ss}_{\beta}/G]\subset
[AY^{ss}(\theta)^{g}/G]\subset I_{\mu} Y$ by taking $a\delta(e)$th root of the
line bundle $L_{-\theta}$ on $[Y^{ss}_{\beta}/G]$.

The root gerbe $\sqrt[a\delta(e)]{L_{-\theta}/[Y^{ss}_{\beta}/G]}$
admits a representation as a quotient stack:
\begin{equation*}\label{eq:edge}
  [(Y^{ss}_{\beta} \!\times\! \C^{*}) / (G \!\times\! \C^{*}_{w})],
\end{equation*}
where the (right) action is defined by
$$(\vec{x},v)\cdot (g,w)=(\vec{x}\cdot g,\theta(g)vw^{a\delta(e)})\ ,$$
for all $(g,w)\in G \!\times\! \C^{*}_{w}$ and $(\vec{x},v)\in A(Y)^{g}
\!\times\! \C^{*}$. Here $ \vec{x}\cdot g$ is given by the action as in the
definition of $[AY/G]$. For every character $\rho$ of $G$, we can define a new
character of $G \!\times\! \C^{*}_{w}$ by composing the projection map
$\mathrm{pr}_{G}:G \!\times\! \C^{*}_{w} \rightarrow G$. By an abuse of
notation, we will continue to use the notation $\rho$ to name the new character
of $G \!\times\! \C^{*}_{w} $. Then the new character $\rho$ will determines a
line bundle $L_{\rho}:=[(Y^{ss}_{\beta}\!\times\! \C^{*} \times \mathbb
C_{\rho}) /(G \!\times\!\C^{*}_{w})]$ on
$\sqrt[a\delta(e)]{L_{-\theta}/[Y^{ss}_{\beta}/G]}$. 

By virtue of its universal
property of the root gerbe $\sqrt[a\delta(e)]{L_{-\theta}/[Y^{ss}_{\beta}/G]}$, there is a universal line bundle $\mathcal R$ that is the
$a\delta(e)$th root of line bundle $L_{-\theta}$ over the root gerbe. This root line bundle
$\mathcal R$ can also constructed by the Borel construction, i.e. $\mathcal R$
is associated to the character $p_{2}$:
$$\mathrm{pr}_{\C^{*}_{w}}:G \!\times\! \C^{*}_{w} \rightarrow \C^{*}_{w} \quad (g,w)\in G \!\times\!
\C^{*}_{w} \mapsto w\in \C^{*}_{w}\ .$$ We have the relation
$$L_{-\theta}=\mathcal R^{a\delta(e)}\ .$$
Then the coordinate functions $(\vec{x},v)\in Y^{ss}_{\beta} \!\times\! \C^{*} $
descents to be tautological sections of vector bundle
$\bigoplus_{i=1}^{n}L_{\rho_{i}}\oplus (L_{\theta}\otimes \mathcal
R^{a\delta(e)})$ on $\sqrt[a\delta(e)]{L_{-\theta}/[Y^{ss}_{\beta}/G]}$.

We will construct a universal family of $\C^{*}-$fixed quasimaps to $\P \mathfrak Y^{\frac{1}{r},p}$
of degree $(\beta,1^{J_{e}},\frac{\delta(e)}{r})$ over the edge moduli $\mathcal
M_{e}$, which takes the form
\begin{equation*}
  \xymatrix{
    \mathcal C_e := \P_{ar,1}(\mathcal R^{\otimes a} \oplus \O_{\mathcal M_{e}}) \ar^-{ev}[r]\ar_{\pi}[d] & \P \mathfrak Y^{\frac{1}{r},p}\\
    \mathcal M_{e}:=\sqrt[a\delta(e)]{L_{-\theta}/[Y^{ss}_{\beta}/G]}\ .
  }
\end{equation*}

The universal curve $\mathcal C_{e}$ over the edge moduli $\mathcal M_{e}$ is constructed as a
quotient stack:
$$\mathcal C_{e}=[(Y^{ss}_{\beta} \!\times\! \C^{*} \!\times\! U)/ (G \!\times\! \C^{*}_{w}
\!\times\! \C^{*}_{t})]\ ,$$
where the right action is defined by:
$$(\vec{x},v,x,y)\cdot (g,w,t)=(\vec{x}\cdot g,\theta(g)vw^{a\delta(e)},w^{a}t^{ar}x,ty)\ ,$$
for all $(g,w,t)\in G \!\times\! \C^{*}_{w} \!\times\! \C^{*}_{t}$ and $(\vec{x},v,(x,y))=((x_{1},\cdots, x_{n}),v,(x,y))\in Y^{ss}_{\beta} \!\times\!
\C^{*} \!\times\! U$.

The universal map $ev$ from $\mathcal C_{e}$ to $\P \mathfrak Y^{\frac{1}{r},p}$
can be presented as follows:
$$\tilde{ev}: Y^{ss}_{\beta} \!\times\! \C^{*} \!\times\! U\rightarrow AY_{p} \!\times\! U\ ,$$
defined by:
\begin{equation}
  \begin{split}
    (\vec{x},v,(x,y))&\in Y^{ss}_{\beta} \!\times\! \C^{*} \!\times\! U \mapsto\\
    &\big(\big(x_{1}x^{\beta(L_{\rho_{1}})},\cdots,x_{n}x^{\beta(L_{\rho_{n}})}\big),(x)_{j\in J_{e}},v^{-1}x^{\delta(e)-\beta(L_{\theta})-|J_{e}|},y^{a\delta(e)}\big)\in
    AY_{p} \!\times\! U\ .
  \end{split}
\end{equation}
Here $(x)_{j\in J_{e}}$ are elements belonging to $\C^{p}$ so that the $j-$th
component is $1$ if $j\notin J_{e}$ and all the other components are $x$. 
Note that when $\beta(L_{\rho_{i}})\notin \Z_{\geq 0}$ for some $i$, we
must have $x_{i}=0$ as $\vec{x}\in Y^{ss}_{\beta}$, so the $\tilde{ev}$ is well
defined. Then $\tilde{ev}$ is equivariant with respect to the group homomorphism from $G
\!\times\! \C^{*}_{w} \!\times\! \C^{*}_{t} $ to $G_{p} \!\times\! \C^{*}$ defined
by:
\begin{equation}
  \begin{split}
    (g,w,t)&\in G \!\times\! \C^{*}_{w} \!\times\! \C^{*}_{t} \mapsto\\
    &\big(g\big(t^{ar\beta(L_{\pi_{1}})}w^{a\beta(L_{\pi_{1}})},\cdots,
    t^{ar\beta(L_{\pi_{k}})}w^{a\beta(L_{\pi_{k}})}\big),(w^{a}t^{ar})_{j\in J_{e}},t^{a\delta(e)}\big)\in
    G_{p} \!\times\! \C^{*}\ .
  \end{split}
\end{equation}
Here $(w^{a}t^{ar})_{j\in J_{e}}$ is the element belonging to $(\C^{*})^{p}$ so that the
$j-$th component is $1$ if $j\notin J_{e}$ and all the other components are $w^{a}t^{ar}$. This gives the universal morphism $f$ from $\mathcal C_{e}$ to $\P \mathfrak
Y^{\frac{1}{r},p}$ by descent.

There is a tautological line bundle $\mathcal O_{C_{e}}(1)$ on $C_{e}$
associated to the character $\mathrm{pr}_{\C^{*}_{t}}$ of $G \!\times\!
\C^{*}_{w} \!\times\! \C^{*}_{t}$ by the Borel construction. Here
$\mathrm{pr}_{\C^{*}_{t}}$ is the projection map from $G \!\times\! \C^{*}_{w}
\!\times\! \C^{*}_{t}$ to $\C^{*}_{t}$. 

We will define a (quasi\footnote{This means we allow
  $\C^{*}-$action on $C_{e}$ with fractional weight. See a similar discussion
  in~\cite[\S 2.2]{chang16_effec_theor_gw_fjrw_invar}.} left) $\C^{*}-$action on $\mathcal C_{e}$ such that the
map $ev$ constructed above is $\C^{*}-$equivariant. Define a (left)
$\C^{*}-$action on $\mathcal C_{e}$ which is induced from the $\C^{*}-$action on $Y^{ss}_{\beta} \!\times\! \C^{*} \!\times\! U$:
$$m:\C^{*}\times Y^{ss}_{\beta} \!\times\! \C^{*} \!\times\! U\rightarrow Y^{ss}_{\beta} \!\times\! \C^{*} \!\times\! U\ ,$$
$$t\cdot (x,v,(x,y))= (x,v,(x,t^{\frac{-1}{ar\delta(e)}}y))\ .$$
Note the morphism $\pi$ is also $\C^{*}$-equivariant, where $\mathcal M_{e}$ is
equipped with trivial $\C^{*}$-action. By the universal
property of the projectivized bundle $\mathcal C_e$ over
$\mathcal M_{e}$, the line bundle
$\mathcal O_{\mathcal C_e}(1)$ is equipped with a tautological section
\begin{equation*}\label{eq:tautosec}
  (x, y) \in H^0\big(\big(\O_{\mathcal C_e}(ar) \otimes \mathcal \pi^{*}\mathcal R^{\otimes a}\big) \oplus (\O_{\mathcal C_e}(1)\otimes \C_{\frac{-1}{ar\delta(e)}})\big)\ ,
\end{equation*}
which is also a $\C^{*}-$invariant section. Here $\mathcal O_{\mathcal
  C_{e}}(1)$ is the standard $\C^{*}$-equivariant line bundle on $\mathcal C_{e}$ by the Borel construction.

Now we can check that $ev$ is a $\C^{*}-$equivariant morphism from $\mathcal
C_{e}$ to $\P \mathfrak Y^{\frac{1}{r},p}$ with respect to the $\C^{*}-$actions
for $\mathcal C_{e}$ and $\P \mathfrak Y^{\frac{1}{r},p}$. According to Remark
\ref{rmk:equivmap}, $ev$ is equivalent to the following data:
\begin{enumerate}\label{equivedge} 
\item $k+p+1$ $\C^{*}$-equivariant line bundles on
  $\mathcal C_{e}$:
  $$\mathcal L_{j} := \pi^*L_{\pi_{j}} \otimes \O_{\mathcal
    C_e}(ar\beta(L_{\pi_{j}}))\otimes \pi^{*}\mathcal R^{\otimes a\beta(L_{\pi_{j}})},
  1\leq j\leq k\ ,$$
 $$\mathcal L_{k+j}:=\pi^{*}\mathcal R^{\otimes a}\otimes \mathcal O_{\mathcal C_{e}}(ar), j\in J_{e},\; \mathcal L_{k+j}:=\C,\; j\notin J_{e}$$
  and
  $$ \mathcal N:=\mathcal O_{\mathcal C_{e}}(a\delta(e))\otimes \mathbb
  C_{\frac{-\lambda}{r}}\ ,$$ where the line bundles $L_{\pi_{j}}$, $\mathcal R$
  are the standard $\C^{*}$-equivariant line bundle on
  $\mathcal M_{e}$ by the Borel construction;
  \item a universal section
  \begin{equation}
    \begin{split}
      \big(\vec{x},\vec{y},(\zeta_1, \zeta_2)\big) :=&
      \big((x_{1}x^{\beta(L_{\rho_{1}})},\cdots,x_{n}x^{\beta(L_{\rho_{n}})}),(x)_{J_{e}},(v^{-1}x^{\delta(e)
        - \beta(L_{\theta})-|J_{e}|}, y^{a\delta(e)})\big)\\
      &\in H^0\big(\mathcal C_{e}, (\oplus_{i=1}^{n} \mathcal L_{\rho_{i}})\oplus(\oplus_{j=1}^{p}\mathcal L_{k+j})\oplus
      (\mathcal L_{-\theta_{p}}\otimes \mathcal N^{\otimes r}\otimes \mathbb
      C_{\lambda})\oplus \mathcal N\big)^{\C^{*}}\ ,
    \end{split}
  \end{equation}
where the line bundles $\mathcal L_{-\theta_{p}}$ and $\mathcal L_{\rho_{i}}$ are
constructed from line bundles $\mathcal L_{j}$ as before.
\end{enumerate}

Equipped with these notations, now we compute the localization contribution from
$\mathcal M_{e}$. Based on the perfect obstruction theory for quasimaps in
$Q^{\widetilde{\theta}}_{0,\vec{m}}(\P \mathfrak
Y^{\frac{1}{r},p},(\beta,1^{p},\frac{\delta}{r}))$, the restriction of the prefect
obstruction theory to $\mathcal M_{e}$ decomposes into three parts: (1) the
deformation theory of source curve $\mathcal C_{e}$; (2) the deformation theory
of the lines bundles $(\mathcal L_{j})_{1\leq j\leq k+p}$ and $\mathcal N$; (3)
the deformation theory for the section
$$(\vec{x},\vec{y},(\zeta_{1},\zeta_{2}))\in \Gamma\left(\oplus_{i=1}^{n} \mathcal
  L_{\rho_{i}}\oplus (\oplus_{j=1}^{p}\mathcal L_{k+j}) \oplus (\mathcal L_{-\theta_{p}}\otimes \mathcal N^{\otimes
    r}\otimes \mathbb C_{\lambda}) \oplus \mathcal N\right)\ .$$

The virtual normal bundle comes from the movable part of the three parts, and
the fixed part will contribute to the virtual cycle of $\mathcal M_{e}$. First every fiber curve $C_{e}$ in $\mathcal C_{e}$ is isomorphic to $\P_{ar,1} $,
which is rational. Then the infinitesimal deformations/obstructions of
$C_{e}$ and the line bundles $L_{j}:=\mathcal L_{j}|_{C_{e}},\; N:=\mathcal
N|_{C_{e}}$ are zero. Hence their contribution to the perfect obstruction theory
solely comes from infinitesimal automorphisms. The infinitesimal automorphisms of $C_{e}$
come from the space of vector field on $C_{e}$ that vanishes on special points.
Thus the $\C^{*}-$fixed part of the infinitesimal automorphisms of $C_{e}$ comes from the
$1-$dimensional subspace of vector fields on $C_{e}$ which vanish on the two
ramification points, which, together with the infinitesimal automorphisms of
line bundle $N$, will be canceled with the fixed part of infinitesimal
deformation of sections $(z_{1},z_{2}):=(\zeta_{1},\zeta_{2})|_{C_{e}}$. The
movable part of infinitesimal automorphisms of $C_{e}$ is nonzero only if at
least one of ramification points on $C_{e}$ is not a special point. By Remark
\ref{rmk:edge1}, the ramification $q_{\infty}$ must be a special point since it
has nontrivial stacky structure when $r$ is sufficiently large, and the
ramification point $q_{0}$ is not a special point, then the movable part of
infinitesimal automorphisms of $C_{e}$ contributes
$$\frac{\delta(e)}{\lambda-D_{\theta}}$$
to the virtual normal bundle.

Now let's turn to the localization contribution from sections. As for the deformations of $z_{2}$, we continue to use the tautological section $(x,y)$ in
\eqref{eq:tautosec}. Sections of $N$ is spanned by monomials
$(x^{m}y^{n})|_{C_{e}}$ with $arm+n=a\delta(e)$ and $m,n\in \Z_{\geq 0}$. Note
$x^{m}y^{n}$ may not be a global section of $\mathcal N$ but always a global
section of the line bundle $R^{\otimes am}\otimes \mathcal N\otimes
\C_{\frac{m}{\delta(e)}\lambda}$. Then $R^{\bullet}\pi_{*}\mathcal N$ will decompose as
a direct sum of line bundles, each corresponds to the monomial $x^{m}y^{n}$, whose
first chern class is
$$c_{1}(\mathcal R^{\otimes -am} \bigotimes \mathbb C_{\frac{-m}{\delta(e)}\lambda})=\frac{m}{\delta(e)}(D_{\theta}-\lambda)\ .$$
So the total contribution is equal to
$$\prod_{m=0}^{\lfloor\frac{\delta(e)}{r}\rfloor}\bigg(\frac{m}{\delta(e)}(D_{\theta}-\lambda)\bigg)\ .$$
The term corresponding to $m=0$ in the above product is the $\C^{*}-$invariant part of $R^{\bullet}\pi_{*}\mathcal N$, it
will contribute to the virtual cycle of $\mathcal M_{e}$. The rest contributes
to the virtual normal bundle as
$$\prod_{m=1}^{\lfloor\frac{\delta(e)}{r}\rfloor}\bigg(\frac{m}{\delta(e)}(D_{\theta}-\lambda)\bigg)\ .$$
Note when $r$ is sufficiently large, the above product becomes $1$.


For the deformation of $z_{1}$, arguing in the same way as $z_{2}$, the Euler
class of $R^{\bullet}\pi_{*}(\mathcal L_{-\theta_{p}}\otimes \mathcal N^{\otimes r}\otimes
\C_{\lambda})$ is equal to
$$\prod_{m=0}^{\delta(e)-\beta(L_{\theta})-|J_e|}\bigg(\frac{m}{\delta(e)}(-D_{\theta}+\lambda)\bigg)\ .$$ 
The factor for $m=0$ appearing in the above product is the $\C^{*}-$fixed part
of $R^{\bullet}\pi_{*}(\mathcal L_{-\theta}\otimes \mathcal N^{\otimes r}\otimes \C_{\lambda})$, it will contribute to the virtual cycle of
$\mathcal M_{e}$. The rest contributes to the virtual normal bundle as 
$$\prod_{m=1}^{\delta(e)-\beta(L_{\theta})-|J_e|}\bigg(\frac{m}{\delta(e)}(-D_{\theta}+\lambda)\bigg)\ .$$ 

Finally, let's turn to the localization contribution from the sections
$\vec{x}$ and $\vec{y}$. Before that, using the
same argument above, one can prove the following lemma:
\begin{lemma}\label{lem:dirsum}
  When $n\in \Z_{\geq 0}$, we have
  $$e^{\C^{*}}\big(R^{\bullet}\pi_{*}(\mathcal O_{\mathcal C_{e}}(n))\big)=\prod_{m=0}^{\lfloor{\frac{n}{ar}}\rfloor}\bigg(\frac{m}{\delta(e)}(D_{\theta}-\lambda)+\frac{n}{ar\delta(e)}\lambda\bigg)\ .$$
  When $n\in \Z_{<0}$, we have
  $$e^{\C^{*}}\big(R^{\bullet}\pi_{*}(\mathcal O_{\mathcal
    C_{e}}(n))\big)=\prod_{\frac{n}{ar}<
    m<0}\frac{1}{\frac{m}{\delta(e)}(D_{\theta}-\lambda)+\frac{n}{ar\delta(e)}\lambda}\ .$$
\end{lemma}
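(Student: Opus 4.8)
The plan is to reduce the statement to the same kind of monomial bookkeeping already carried out above for $\mathcal N$ and for $\mathcal L_{-\theta_{p}}\otimes\mathcal N^{\otimes r}\otimes\C_{\lambda}$, only now applied to the relative $\mathcal O(n)$ of the (weighted) projectivized bundle $\mathcal C_{e}=\P_{ar,1}(\pi^{*}\mathcal R^{\otimes a}\oplus\O_{\mathcal M_{e}})\to\mathcal M_{e}$. First I would record the two tautological coordinate sections coming from the quotient presentation of $\mathcal C_{e}$: the section $x\in H^{0}\big(\mathcal C_{e},\pi^{*}\mathcal R^{\otimes a}\otimes\O_{\mathcal C_{e}}(ar)\big)$, which is $\C^{*}$-invariant of weight $0$, and the section $y\in H^{0}\big(\mathcal C_{e},\O_{\mathcal C_{e}}(1)\otimes\C_{\frac{-\lambda}{ar\delta(e)}}\big)$, together with the numerical relation $c_{1}(\mathcal R)=\tfrac{1}{a\delta(e)}c_{1}(L_{-\theta})=\tfrac{-D_{\theta}}{a\delta(e)}$ coming from $L_{-\theta}=\mathcal R^{\otimes a\delta(e)}$. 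Since $\mathcal C_{e}\to\mathcal M_{e}$ is the projectivization of a \emph{split} rank-two (weighted) bundle, $\pi_{*}\O_{\mathcal C_{e}}(n)$ and $R^{1}\pi_{*}\O_{\mathcal C_{e}}(n)$ both split as direct sums of line bundles on $\mathcal M_{e}$, one summand for each monomial in a fibrewise basis, so the whole computation is the bookkeeping of equivariant first Chern classes of these summands.

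For $n\in\Z_{\geq 0}$ the fibres are genus-zero (stacky) curves, so $R^{1}\pi_{*}\O_{\mathcal C_{e}}(n)=0$, and $H^{0}\big(\P_{ar,1},\mathcal O(n)\big)$ has the monomial basis $\{x^{m}y^{\,n-arm}:0\leq m\leq\lfloor n/ar\rfloor\}$. Each $x^{m}y^{\,n-arm}$ is a global section of $\pi^{*}\mathcal R^{\otimes am}\otimes\O_{\mathcal C_{e}}(n)\otimes\C_{\frac{-(n-arm)}{ar\delta(e)}\lambda}$, so the associated direct summand of $\pi_{*}\O_{\mathcal C_{e}}(n)$ is $\pi^{*}\mathcal R^{\otimes -am}\otimes\C_{\frac{n-arm}{ar\delta(e)}\lambda}$, whose equivariant first Chern class is $\tfrac{m}{\delta(e)}D_{\theta}+\tfrac{n-arm}{ar\delta(e)}\lambda=\tfrac{m}{\delta(e)}(D_{\theta}-\lambda)+\tfrac{n}{ar\delta(e)}\lambda$; taking the product over $m$ gives the first formula. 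For $n\in\Z_{<0}$ one has $\pi_{*}\O_{\mathcal C_{e}}(n)=0$, while the standard \v{C}ech computation (equivalently relative Serre duality, using $\omega_{\P_{ar,1}}=\mathcal O(-ar-1)$) identifies $R^{1}\pi_{*}\O_{\mathcal C_{e}}(n)$ with the direct sum of the line bundles attached to the ``negative monomials'' $x^{m}y^{\,n-arm}$ indexed by the integers $m$ with $\tfrac{n}{ar}<m<0$; the same twisting bookkeeping gives each such summand equivariant first Chern class $\tfrac{m}{\delta(e)}(D_{\theta}-\lambda)+\tfrac{n}{ar\delta(e)}\lambda$. Since $R^{\bullet}\pi_{*}\O_{\mathcal C_{e}}(n)=-R^{1}\pi_{*}\O_{\mathcal C_{e}}(n)$ in $K$-theory when $n<0$, its equivariant Euler class is the reciprocal of the product over these $m$, which is the second formula.

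The argument is essentially routine given the preceding pages; the only points that need care are (i) verifying that $\pi_{*}$ (resp. $R^{1}\pi_{*}$) genuinely splits as a direct sum rather than merely carrying a filtration with these graded pieces — this is where I use that $\mathcal C_{e}$ is the projectivization of the \emph{split} bundle $\pi^{*}\mathcal R^{\otimes a}\oplus\O_{\mathcal M_{e}}$, so that the summands can be written down explicitly, and (ii) keeping the fractional $\C^{*}$-weight $-\tfrac{1}{ar\delta(e)}$ on $y$ and the $\mathcal R$-twist consistent throughout, exactly as in the $\mathcal N$ and $z_{1}$ computations above. These are the main, if mild, obstacles.
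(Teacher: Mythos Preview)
Your proposal is correct and is exactly the argument the paper intends: the paper's own proof is the one-line remark ``using the same argument above, one can prove the following lemma,'' referring to the monomial bookkeeping for $\mathcal N$ and $\mathcal L_{-\theta_{p}}\otimes\mathcal N^{\otimes r}\otimes\C_{\lambda}$ carried out immediately before, and you have simply written that argument out for $\mathcal O_{\mathcal C_{e}}(n)$ with the same tautological sections $x,y$, the same $\mathcal R$-twist, and the same Chern-class arithmetic.
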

Using the above lemma, we have the following description of
$e^{\C^{*}}(R^{\bullet}\pi_{*}\mathcal L_{\rho_{i}})$ for $1\leq i\leq n$. Then for each $\rho_{i}$, we have:
\begin{enumerate}\label{virnomx}
\item If $\beta(L_{\rho_{i}})\in \mathbb Q_{\geq 0}$, one has
  \begin{equation*}\label{eq:virnomx1}
    \begin{split}
      e^{\C^{*}}\big(R^{\bullet}\pi_{*}(\mathcal
      L_{\rho_{i}})\big)&=e^{\C^{*}}\big(R^{\bullet}\pi_{*}(\pi^{*}(L_{\rho_{i}})\otimes \mathcal
      O_{\mathcal C_{e}}(ar\beta(L_{\rho_{i}}))\otimes \pi^{*}(\mathcal R^{\otimes a\beta(L_{\rho_{i}})}))\big)\\
      &=e^{\C^{*}}\big(L_{\rho_{i}}\otimes \mathcal R^{\otimes a\beta(L_{\rho_{i}})} \otimes R^{0}\pi_{*}(\mathcal O_{\mathcal C_{e}}(ar\beta(L_{\rho_{i}})))\big)\\
      &=\prod_{m=0}^{\lfloor {\beta(L_{\rho_{i}})}\rfloor}\bigg( D_{\rho_{i}}+\frac{\beta(L_{\rho_{i}})(-D_{\theta})}{\delta(e)}+\frac{m}{\delta(e)}(D_{\theta}-\lambda)+\frac{\beta(L_{\rho_{i}})}{\delta(e)}\lambda\bigg)\\
      &=\prod_{m=0}^{\lfloor {\beta(L_{\rho_{i}})} \rfloor}\bigg(
      D_{\rho_{i}}+\frac{\beta(L_{\rho_{i}})-m}{\delta(e)}(\lambda-D_{\theta})\bigg)\ .
    \end{split}
  \end{equation*}
  Hence we have
  \begin{align*}\label{eq:virnomx1mov}
    e^{\C^{*}}((R^{\bullet}\pi_{*}\mathcal L_{\rho_{i}})^{\mathrm{mov}})
    &=\prod_{0\leq m< \beta(L_{\rho_{i}})}\big( D_{\rho_{i}}+\frac{\beta(L_{\rho_i})-m}{\delta(e)}(\lambda-D_{\theta})\big)\ .
  \end{align*}
  Note the invariant part of $R^{\bullet}\pi_{*}\mathcal L_{\rho_{i}}$ is nonzero
  only when $\beta(L_{\rho_{i}})\in \Z_{\geq 0}$.
\item If $\beta(L_{\rho_{i}})\in \mathbb Q_{< 0}$, one has
  \begin{equation*}\label{eq:virnomx2}
    \begin{split}
      e^{\C^{*}}\big(R^{\bullet}\pi_{*}\mathcal
      L_{\rho_{i}}\big)&=e^{\C^{*}}\big(R^{\bullet}\pi_{*}\big(\pi^{*}L_{\rho_{i}}\otimes
      \mathcal
      O_{\mathcal C_{e}}(ar\beta(L_{\rho_{i}}))\otimes \pi^{*}\mathcal R^{\otimes a\beta(L_{\rho_{i}})}\big)\big)\\
      &=\frac{1}{e^{\C^{*}}\big(L_{\rho_{i}}\otimes \mathcal R^{\otimes a\beta(L_{\rho_{i}})} \otimes R^{1}\pi_{*}(\mathcal O_{\mathcal C_{e}}(ar\beta(L_{\rho_{i}})))\big)}\\
      &=\prod_{\beta(L_{\rho_{i}})<m<0}\frac{1}{ D_{\rho_{i}}+\frac{\beta(L_{\rho_{i}})(-D_{\theta})}{\delta(e)}+\frac{m}{\delta(e)}(D_{\theta}-\lambda)+\frac{\beta(L_{\rho_{i}})}{\delta(e)}\lambda}\\
      &=\prod_{\beta(L_{\rho_{i}})<m<0}\frac{1}{
        D_{\rho_{i}}+\frac{\beta(L_{\rho_{i}})-m}{\delta(e)}(\lambda-D_{\theta})}\ ,
    \end{split}
  \end{equation*}
  which implies that
  \begin{align*}\label{eq:virnomx2mov}
    e^{\C^{*}}\big((R^{\bullet}\pi_{*}\mathcal L_{\rho_{i}})^{\mathrm{mov}}\big)&=e^{\C^{*}}\big(R^{\bullet}\pi_{*}\mathcal
      L_{\rho_{i}}\big)\\
    &=\prod_{\beta(L_{\rho_{i}})<m<0}\frac{1}{ D_{\rho_{i}}+\frac{\beta(L_{\rho_{i}})-m}{\delta(e)}(\lambda-D_{\theta})}\ .
  \end{align*}
\end{enumerate}
The movable part of deformation of $\vec{y}$ contributes
$$e^{\C^{*}}(R^{\bullet}\pi_{*}(\oplus_{j=1}^{p}\mathcal
L_{k+j}))=(\frac{\lambda-D_{\theta}}{\delta(e)})^{|J_{e}|}\ .$$
to the virtual normal bundle and the fixed part of deformation of $\vec{y}$ will
be canceled with the automorphisms of line bundles $(L_{k+j}:1\leq j\leq p)$. 

Recall that the complete intersection $Y$ is cut off by the section $s:=\oplus_{b=1}^{c}s_{b}$ of direct sum
of the line bundles
$E=\oplus_{b=1}^{c}L_{\tau_{b}}$ on $X$ associated to the characters $\tau_{b}$. There is also an obstruction corresponding to the infinitesimal deformations of
$\vec{x}$ being moved away from $[AY^{ss}(\theta)/ G]\subset [W^{ss}(\theta)/
G]$, which contributes to the virtual normal bundle as the movable part of 
\begin{align*}
  e^{\C^{*}}\big(-(\oplus_{b}R^{\bullet}\pi_{*}\mathcal L_{\tau_{b}})\big)&=\frac{e^{\C^{*}}\big(R^{1}\pi_{*}\oplus_{b:\beta(L_{\tau_{b}})<0}\mathcal L_{\tau_{b}})\big)}{e^{\C^{*}}\big(R^{0}\pi_{*}\oplus_{b:\beta(L_{\tau_{b}})\geq 0}\mathcal L_{\tau_{b}}\big)}\\
                                                     &=\frac{\prod_{b:\beta(L_{\tau_{b}})< 0}\prod_{\beta(L_{\tau_b})<m<0}\big( c_1(L_{\tau_b})+\frac{\beta(L_{\tau_b})-m}{\delta(e)}(\lambda-D_{\theta})\big)}{\prod_{b:\beta(L_{\tau_{b}})\geq 0}\prod_{0\leq m\leq \beta(L_{\tau_b})}\big( c_1(L_{\tau_b})+\frac{\beta(L_{\tau_b})-m}{\delta(e)}(\lambda-D_{\theta})\big)}\ .
\end{align*}
Here $m$ are all integers.

One can see that the fixed part only comes from the
summand corresponding to the terms $b$ with $\beta(L_{\tau_{b}})\in \mathbb
Z_{\geq 0}$, for which there is one
dimensional $\C^{*}-$fixed piece to each $-R^{\bullet}\pi_{*}\mathcal L_{\tau_b}$, which contributes to the
virtual cycle of $\mathcal M_{e}$.

Now let's move the virtual cycle of $\mathcal M_{e}$. Consider $E_{\geq 0}:=\oplus_{b:\beta(L_{\tau_{b}})\in \mathbb Z_{\ geq
    0}}L_{\tau_{b}}$ the vector bundle $[Z^{ss}_{\beta}/G]$ and $s_{\geq 0}=\oplus_{b:\beta(L_{\tau_{b}})\in \mathbb
  Z_{\geq 0}} s_{b}$ a section of $E_{\geq 0}$. Using Lemma \ref{lem:comp-graph}, we can define the Gysin morphism
$$s^{!}_{E_{\geq 0},loc}:A_{*}([Z^{ss}_{\beta}/G])\rightarrow
A_{*}([Y^{ss}_{\beta}/G])$$
as the localized top Chern class~\cite[\S14.1]{MR732620}.

\begin{lemma}\label{lem:edgevir2}
  We have the following:
  $$[\mathcal M_{e}]^{\mathrm{vir}}=i_{\mathcal
      M_{e}}^{*}\big(s_{E_{\geq 0},loc}^{!}([Z^{ss}_{\beta}/G])\big)\ .$$
  Here $i_{\mathcal M_{e}}:\mathcal M_{e}\rightarrow [Y^{ss}_{\beta}/G]$ is the
  natural $\acute{e}tale$ morphism by forgetting gerbe structure.
\end{lemma}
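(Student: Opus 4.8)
The plan is to identify the $\C^{*}$-fixed part of the restriction of the perfect obstruction theory \eqref{pot} to the edge locus $\mathcal M_{e}$ and to recognize it as the pullback along $i_{\mathcal M_{e}}$ of the canonical obstruction theory defining the localized top Chern class $s^{!}_{E_{\geq 0},loc}$. I will use the factorizations \eqref{eq:fac-pot} and \eqref{eq:rel-pot}, restrict them to $\mathcal M_{e}$, and pass to $\C^{*}$-fixed parts; since taking fixed parts is exact it commutes with forming cones of the relevant triangles.

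First I would collect the cancellations already established in \S\ref{subsubsec:edgebase}: the $\C^{*}$-fixed part of $R^{\bullet}\pi_{*}(\mathcal O_{\mathcal C_{e}}\otimes\mathfrak g_{r,p})$ coming from the $\C^{*}$-factors (automorphisms of $(L_{k+j})_{j=1}^{p}$ and of $N$), together with the one-dimensional fixed piece of the infinitesimal automorphisms of the curve $\mathcal C_{e}$, cancels against the fixed parts of the deformations of $\vec{y}$ and of $(z_{1},z_{2})$ (the $m=0$ summands), while for $r$ a sufficiently large prime every summand with $m\geq 1$ lies in the moving part. Therefore the $\C^{*}$-fixed part of the obstruction theory on $\mathcal M_{e}$ reduces to the complex $[\mathcal O\otimes\mathfrak g \longrightarrow (R^{\bullet}\pi_{*}\oplus_{i}\mathcal L_{\rho_{i}})^{\mathrm{fix}} \xrightarrow{\;ds\;} (R^{\bullet}\pi_{*}\oplus_{b}\mathcal L_{\tau_{b}})^{\mathrm{fix}}]$, where the first arrow is the fixed part of the $\mathfrak g$-summand in \eqref{eq:fac-pot} and the second the fixed part of the differential in \eqref{eq:rel-pot}.

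Next I would compute these fixed parts explicitly. By Lemma \ref{lem:dirsum} and the form of $\mathcal L_{\rho_{i}}$ recorded in \S\ref{subsubsec:edgebase}, the weight-zero summand of $R^{\bullet}\pi_{*}\mathcal L_{\rho_{i}}$ is nonzero precisely when $\beta(L_{\rho_{i}})\in\mathbb Z_{\geq 0}$, in which case it is the line bundle with first Chern class $D_{\rho_{i}}$, i.e. the pullback to $\mathcal M_{e}$ of $L_{\rho_{i}}$ on $[Z^{ss}_{\beta}/G]$; the same computation gives $(R^{\bullet}\pi_{*}\oplus_{b}\mathcal L_{\tau_{b}})^{\mathrm{fix}}\cong i_{\mathcal M_{e}}^{*}E_{\geq 0}$, nonzero only for the $b$ with $\beta(L_{\tau_{b}})\in\mathbb Z_{\geq 0}$, consistently with Lemma \ref{lem:comp-graph}. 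Since $Z_{\beta}\cong\bigoplus_{\rho:\beta(L_{\rho})\in\mathbb Z_{\geq 0}}\mathbb C_{\rho}$ as a $G$-representation, the two-term complex $[\mathcal O\otimes\mathfrak g\to\bigoplus_{\rho:\beta(L_{\rho})\in\mathbb Z_{\geq 0}}L_{\rho}]$ is a presentation of $\mathbb T_{[Z^{ss}_{\beta}/G]}$ pulled back to $\mathcal M_{e}$, and $ds$ restricts to the derivative of the section $s_{\geq 0}$. Hence the $\C^{*}$-fixed obstruction theory on $\mathcal M_{e}$ equals $i_{\mathcal M_{e}}^{*}$ of the canonical perfect obstruction theory of the zero locus $[Y^{ss}_{\beta}/G]=Z(s_{\geq 0})\subset[Z^{ss}_{\beta}/G]$; its virtual class is by definition the localized top Chern class \cite[\S14.1]{MR732620}, and because $i_{\mathcal M_{e}}$ is étale both virtual classes and refined Gysin maps commute with its (flat) pullback, giving $[\mathcal M_{e}]^{\mathrm{vir}}=i_{\mathcal M_{e}}^{*}\big(s^{!}_{E_{\geq 0},loc}([Z^{ss}_{\beta}/G])\big)$.

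The main obstacle is the identification in the previous paragraph: showing that the weight-zero part of $R^{\bullet}\pi_{*}\oplus_{i}\mathcal L_{\rho_{i}}$ together with the $\mathfrak g$-summand assembles into $\mathbb T_{[Z^{ss}_{\beta}/G]}|_{\mathcal M_{e}}$ exactly, with no stray summand, that the induced cosection really is the differential $ds_{\geq 0}$ rather than merely agreeing with it after taking cohomology, and that all monomials indexed by $m\geq 1$ are genuinely moving — this last point is precisely where the hypothesis that $r$ is a sufficiently large prime enters. A secondary bookkeeping issue is that $\mathcal M_{e}$ is a root gerbe over $[Y^{ss}_{\beta}/G]$ while $i_{\mathcal M_{e}}$ forgets this gerbe structure; since that morphism is étale and we work with $\mathbb Q$-coefficients, it does not affect the identification of cycle classes.
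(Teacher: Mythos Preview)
Your proposal is correct and follows essentially the same approach as the paper: both identify the $\C^{*}$-fixed part of the restricted obstruction theory on $\mathcal M_{e}$ via the triangles \eqref{eq:fac-pot} and \eqref{eq:rel-pot}, recognize it as the pullback of the distinguished triangle $\mathbb E^{\mathrm{fix}}\to\mathbb T_{[Z^{ss}_{\beta}/G]}|_{\mathcal M_{e}}\xrightarrow{ds_{\geq 0}}E_{\geq 0}$, and conclude by the standard model for the localized top Chern class together with the \'etale pullback along $i_{\mathcal M_{e}}$. Your exposition is more explicit about the individual cancellations and the role of the hypothesis that $r$ is a sufficiently large prime, but the logical skeleton is the same.
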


\begin{remark}\label{rmk:viredge}
We note that $s_{\geq0}$ and $E_{\geq 0}$ can descend to a section and a vector
bundle over $[Y^{ss}_{\beta}/G]$, thus $s_{E_{\geq
    0},loc}^{!}([Z^{ss}_{\beta}/(G/\langle g^{-1}_{\beta}\rangle)])$ is also well defined. 
\end{remark}
\begin{proof}
Now combing the discussion above, the virtual cycle structure of $\mathcal
M_{e}$ solely comes automorphisms of line bundles $(\mathcal
L_{j})_{j=1}^{k}$, fixed part of deformations/obstructions of the sections
$\vec{x}$. Using the distinguished triangle \ref{eq:rel-pot} and \ref{eq:fac-pot} in \S\ref{subsec:fixed-loci1}, the $\C^{*}-$fixed part of the obstruction theory $\mathbb
E^{\mathrm{fix}}$ over $\mathcal M_{e}$ fits into the following distinguished triangle
$$\xymatrix{\mathbb E^{\mathrm{fix}}\ar[r] &\mathbb T_{[Z^{ss}_{\beta}/G]}|_{\mathcal
  M_{e}}\ar[r]^-{ds_{\geq
    0}}&E_{\geq 0}:=\oplus_{b:\beta(L_{\tau_{b}})\in \mathbb Z_{\geq 0}}L_{\tau_{b}}}\ .$$
Here $ds_{\geq 0}$ is the differential of the section $s_{\geq
  0}:=\oplus_{b:\beta(L_{\tau_{b}})\in \mathbb Z_{\geq 0}}s_{b}$, which is inside the
vector bundle $E_{\geq 0}$ over $\mathcal M_{e}$, and $\mathbb T_{[Z^{ss}_{\beta}/G]}|_{\mathcal
  M_{e}}$ is the pull back of the tangent bundle $\mathbb
T_{[Z^{ss}_{\beta}/G]}$ along the morphism
$$\mathcal M_{e}\rightarrow
\sqrt[a\delta(e)]{L_{-\theta}/[Z^{ss}_{\beta}/G]}\rightarrow
[Z^{ss}_{\beta}/G]\ ,$$
where the first arrow is the inclusion and the second arrow is the natural etale
morphism by forgetting gerbe structure.

By Lemma \ref{lem:comp-graph}, $\mathcal M_{e}$ is the zero loci of the section $s_{\geq
  0}$ of the
vector bundle $E_{\geq 0}:=\oplus_{b:\beta(L_{\tau_{b}})\in \mathbb Z_{\geq
    0}}L_{\tau_{b}}$ over $\sqrt[a\delta(e)]{L_{-\theta}/[Z^{ss}_{\beta}/G]}$.
Thus the virtual cycle $[\mathcal M_{e}]^{vir}$ with respect to the perfect
obstruction theory $\mathbb E^{\mathrm{fix}}$ can be obtained by the standard
model associated with the vector bundle $E_{\geq 0}$ over $\sqrt[a\delta(e)]{L_{-\theta}/[Z^{ss}_{\beta}/G]}$ and the section
$s_{\geq 0}$(c.f.\cite[\S Appendix]{pandharipande201413}). Now the Lemma
\ref{lem:edgevir2} is immediate as the distinguished triangle is also pull-back of a similar
distinguished triangle on $[Y^{ss}_{\beta}/G]$.
\end{proof}
We have the expression of virtual normal bundle from the movable part of curves,
line bundles and sections as follows:
\begin{align*}
  e^{\C^{*}}(N^{\mathrm{vir}})&=\frac{\prod_{\rho:\beta(L_{\rho})> 0}\prod_{0\leq i< \beta(L_{\rho})}(D_{\rho}+(\beta(L_{\rho})-i)\frac{\lambda-D_{\theta}}{\delta(e)})}{\prod_{\rho:\beta(L_{\rho})< 0}\prod_{\lfloor {\beta(L_{\rho})+1 } \rfloor\leq i<0}(D_{\rho}+(\beta(L_{\rho})-i)\frac{\lambda-D_{\theta}}{\delta(e)})}\cdot \frac{\delta(e)}{\lambda-D_{\theta}}\\
                              &\cdot \frac{\prod_{b:\beta(L_{\tau_{b}})< 0}\prod_{\beta(L_{\tau_b})<m<0}\big( c_1(L_{\tau_b})+\frac{\beta(L_{\tau_b})-m}{\delta(e)}(\lambda-D_{\theta})\big)}{\prod_{b:\beta(L_{\tau_{b}})\geq 0}\prod_{0\leq m< \beta(L_{\tau_{b}})}\big( c_1(L_{\tau_b})+\frac{\beta(L_{\tau_b})-m}{\delta(e)}(\lambda-D_{\theta})\big)}\cdot \prod_{m=1}^{\delta(e)-\beta(L_{\theta})-|J_e|}\bigg(\frac{m}{\delta(e)}(-D_{\theta}+\lambda)\bigg)\ .
\end{align*} 
We observe that, after taking the push-forward $i:\mathcal M_{e}\rightarrow \sqrt[a\delta(e)]{L_{-\theta}/I_{g_{\beta}}Y}$, the localization contribution from edge moduli with a basepoint yields:
\begin{lemma}\label{lem:edgeloc}
  $$i_{*}(Cont_{\mathcal M_{e}})=(\bar{i}_{})^{*}\left(\iota_{*}\frac{(\frac{z}{z^{|J_{e}|}}\mathbb I_{\beta}(q,z))|_{z=\frac{\lambda-D_{\theta}}{\delta(e)}}}{\prod_{m=1}^{\delta(e)-\beta(L_{\theta})-|J_e|}\big(\frac{m}{\delta(e)}(-D_{\theta}+\lambda\big)}\right)\ ,$$
  where $\bar{i}_{I_{g_{\beta}}Y}:\sqrt[a\delta(e)]{L_{-\theta}/I_{g_{\beta}}Y}\rightarrow \bar{I}_{g_{\beta}}Y:=[Y^{ss}_{\beta}/(G/\langle
  g_{\beta}\rangle)]$ is the
  natural structure map by forgetting gerbe structure and taking rigidification, $\iota$ is the involution
  of $\bar{I}_{\mu}Y$ obtained from taking the inverse of the band, and
  $\mathbb I_{\beta}$ is the coefficient of $q^{\beta}$ of $\mathbb
  I(q,0,z)$ defined in the introduction. 
\end{lemma}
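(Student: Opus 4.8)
The plan is to assemble the edge contribution $Cont_{\mathcal M_{e}}=[\mathcal M_{e}]^{\mathrm{vir}}/e^{\C^{*}}(N^{\mathrm{vir}})$ from the pieces already computed in \S\ref{subsubsec:edgebase}, and then recognize it, after the substitution $z=\frac{\lambda-D_{\theta}}{\delta(e)}$, as the asserted specialization of $\mathbb I_{\beta}(q,z)$. First I would combine Lemma~\ref{lem:edgevir2}, which writes $[\mathcal M_{e}]^{\mathrm{vir}}=i_{\mathcal M_{e}}^{*}(s^{!}_{E_{\geq 0},loc}([Z^{ss}_{\beta}/G]))$, with the displayed formula for $e^{\C^{*}}(N^{\mathrm{vir}})$ just above the lemma. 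Dividing, $Cont_{\mathcal M_{e}}$ becomes the product of: the curve/line-bundle factor $\frac{\delta(e)}{\lambda-D_{\theta}}$ together with the products over $\rho$ with $\beta(L_{\rho})>0$ and with $\beta(L_{\rho})<0$; the twisting factor coming from $-\oplus_{b}R^{\bullet}\pi_{*}\mathcal L_{\tau_{b}}$ with products over $b$ with $\beta(L_{\tau_{b}})>0$ and with $\beta(L_{\tau_{b}})<0$; the residual factor $(\frac{\lambda-D_{\theta}}{\delta(e)})^{|J_{e}|}$ from the deformations of $\vec y$ and the factor $\prod_{m=1}^{\delta(e)-\beta(L_{\theta})-|J_{e}|}(\frac{m}{\delta(e)}(-D_{\theta}+\lambda))$ from the non-fixed deformations of $z_{1}$; and the localized cycle $s^{!}_{E_{\geq 0},loc}([Z^{ss}_{\beta}/G])$ carrying the virtual class.

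Next I would compare this with the coefficient $\mathbb I_{\beta}(q,z)$ of $q^{\beta}$ in \eqref{eq:mirror}. The key point is that substituting $z\mapsto\frac{\lambda-D_{\theta}}{\delta(e)}$ carries the $z$-dependent factors of $\mathbb I_{\beta}$ --- the denominator $\prod_{\rho:\beta(L_{\rho})>0}\prod_{0\leq i<\beta(L_{\rho})}(D_{\rho}+(\beta(L_{\rho})-i)z)$, the analogous numerator over negative $\rho$, and the $(c_{1}(L_{\tau_{b}})+(\beta(L_{\tau_{b}})-i)z)$ products --- exactly onto the products appearing in $e^{\C^{*}}(N^{\mathrm{vir}})$ and in the twisting factor, while $i_{*}(s^{!}_{E_{\geq 0},loc}([Z^{ss}_{\beta}/(G/\langle g^{-1}_{\beta}\rangle)]))$ in \eqref{eq:mirror} matches the pushforward of the virtual class of Lemma~\ref{lem:edgevir2}. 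The prefactor $\frac{z}{z^{|J_{e}|}}$ on the right reflects dividing by the curve-automorphism factor $\frac{\delta(e)}{\lambda-D_{\theta}}$ (which produces one power of $z$) and by the $\vec y$-deformation factor $(\frac{\lambda-D_{\theta}}{\delta(e)})^{|J_{e}|}$ (which produces $z^{-|J_{e}|}$), while the remaining $z_{1}$-deformation factor $\prod_{m=1}^{\delta(e)-\beta(L_{\theta})-|J_{e}|}(\frac{m}{\delta(e)}(-D_{\theta}+\lambda))$ has no counterpart in $\mathbb I_{\beta}$ and must be divided out explicitly, exactly as written. Matching of the index ranges is clean because $r$ is a sufficiently large prime, so by Lemma~\ref{lem:dirsum} all the ``collapse-to-$1$'' extra products vanish and the integer ranges $0\leq i<\beta(L_{\rho})$ agree with the $\lfloor\beta(L_{\rho})\rfloor$-ranges appearing in $e^{\C^{*}}(N^{\mathrm{vir}})$.

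Finally I would handle the group-theoretic bookkeeping behind $(\bar i)^{*}\iota_{*}$. The edge moduli $\mathcal M_{e}=\sqrt[a\delta(e)]{L_{-\theta}/[Y^{ss}_{\beta}/G]}$ maps to the inertia component indexed by $g_{\beta}$, whereas the class in $\mathbb I_{\beta}$ lives in $H^{*}(\bar I_{g^{-1}_{\beta}}Y)$; the involution $\iota$ of $\bar I_{\mu}Y$ inverting the band reconciles these, and $\bar i:=\bar i_{I_{g_{\beta}}Y}$ is the composite ``forget gerbe then rigidify'' map, so that $\iota_{*}$ of the $\mathbb I_{\beta}$-term pulls back under $\bar i$ to the cycle supported on $\mathcal M_{e}$. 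The main obstacle I anticipate is precisely this last identification --- matching $i_{\mathcal M_{e}}^{*}(s^{!}_{E_{\geq 0},loc}([Z^{ss}_{\beta}/G]))$ on the root gerbe with the rigidified, band-inverted class $i_{*}(s^{!}_{E_{\geq 0},loc}([Z^{ss}_{\beta}/(G/\langle g^{-1}_{\beta}\rangle)]))$ appearing in \eqref{eq:mirror}, keeping track of how the \'etale degree of $\bar i$ and the order of $g_{\beta}$ enter and using Remark~\ref{rmk:viredge} that $s_{\geq 0}$ and $E_{\geq 0}$ descend. Once Lemma~\ref{lem:edgevir2} and the $e^{\C^{*}}(N^{\mathrm{vir}})$ formula are in hand, the remaining bookkeeping of powers of $z$ and the final application of $i_{*}$ are routine.
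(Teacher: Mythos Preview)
Your proposal is correct and matches the paper's approach. In the paper this lemma is not given a separate proof; it is stated as an observation immediately following the explicit formula for $e^{\C^{*}}(N^{\mathrm{vir}})$ and Lemma~\ref{lem:edgevir2}, so your plan of assembling $[\mathcal M_{e}]^{\mathrm{vir}}/e^{\C^{*}}(N^{\mathrm{vir}})$ from those pieces and recognizing the result as the specialization of $\mathbb I_{\beta}$ at $z=\frac{\lambda-D_{\theta}}{\delta(e)}$ is exactly what is intended, and your bookkeeping of the $z$-powers from the curve-automorphism and $\vec y$-deformation factors, the residual $z_{1}$-product in the denominator, and the $\iota$-involution reconciling $g_{\beta}$ with $g_{\beta}^{-1}$ is on target.
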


\subsubsection{Edge contributions: without basepoint case}
The contribution from an edge without basepoint will not appear in the later
analysis in \S \ref{sec:rec-rel}. However we include the
discussion for this case here for completeness. The reader is encouraged to skip
this part in the first reading. In this case, $J_{e}$ is empty. Assume that the multiplicity at $q_{\infty}\in C_{e}$ is equal to
$(g,\mu_{r}^{\delta(e)})\in G \!\times\! \boldsymbol{\mu}_{r}$ and $a_{e}$ (or
$a$ for simplicity) is the
order of $g$. When $r$ is sufficiently large, due to Remark \ref{rmk:edge1},
$C_{e}$ must be isomorphic to $\P^{1}_{ar,a}$ where the ramification point
$q_{0}$ for which $z_{1}=0$ is isomorphic to $\mathbb B\boldsymbol{\mu}_{a}$,
and the ramification point $q_{\infty}$ for which $z_{2}=0$ must be a special
point and is isomorphic to $\mathbb B\boldsymbol{\mu}_{ar}$. The restriction of
degree $(\beta,\frac{\delta}{r})$ from $C$ to $C_{e}$ is equal to
$(0,\frac{\delta(e)}{r})$, which is equivalent to:
$$deg(L_{j}|_{C_{e}})=0 \quad \textit{for}\;1\leq j\leq k,\quad deg(N|_{C_{e}})=\frac{\delta(e)}{r}\ .$$

Recall that the inertia stack component $I_{g}Y$ of $I_{\mu}Y$ is isomorphic to
the quotient stack
$$[AY^{ss}(\theta)^{g}/G]\ .$$
We construct the edge moduli $\mathcal M_{e}$ as
$$\mathcal M_{e}:=\sqrt[a\delta(e)]{L_{-\theta}/I_{g}Y}\ ,$$
which is the root gerbe over the stack $I_{g}Y$ by taking the $a\delta(e)$th
root of the line bundle $L_{-\theta}$.

The root gerbe $\sqrt[a\delta(e)]{L_{-\theta}/I_{g}Y}$ admits a representation
as a quotient stack:
\begin{equation}\label{eq:edge0}
  [(AY^{ss}(\theta)^{g} \!\times\! \C^{*}) / (G \!\times\! \C^{*}_w)]\ ,
\end{equation}
where the (right) action is defined by:
$$(\vec{x},v)\cdot (g,w)=(\vec{x}\cdot g,\theta(g)vw^{a\delta(e)})\ ,$$
for all $(g,w)\in G \!\times\! \C^{*}_w$ and $(\vec{x},v)\in AY^{ss}(\theta)^{g}
\!\times\! \C^{*}$. Here $\vec{x}\cdot g$ is given by the action as in the
definition of $[AY/G]$, the torus $\C^{*}_{w}$ is isomorphic to $\C^{*}$ with
variable $w$. For any character $\rho$ of $G$, define a new character of $G
\!\times\! \C^{*}_w$ by composing the projection map $\mathrm{pr}_{G}:G
\!\times\! \C^{*}_{w} \rightarrow G$. By an abuse of notation, we will continue
to use the notation $\rho$ to mean the new character of $G \!\times\! \C^{*}_w
$. Then $\rho$ will determines a line bundle
$L_{\rho}:=[(AY^{ss}(\theta)^{g}\!\times\! \C^{*} \!\times\! \mathbb C_{\rho})
/(G \!\times\!\C^{*}_{w})]$ on $\sqrt[a\delta(e)]{L_{-\theta}/I_{g}Y}$ by the
Borel construction.

By virtue of the universal property of root gerbe, on $\mathcal
M_{e}=\sqrt[a\delta(e)]{L_{-\theta}/I_{g}Y}$, there is a universal line bundle
$\mathcal R$ that is the $a\delta(e)$th root of the line bundle $L_{-\theta}$.
The root bundle $\mathcal R$ is associated to the character
$$\mathrm{pr}_{\C^{*}}:G \!\times\! \C^{*}_w \rightarrow \C^{*}_{w}, \quad (g,w)\in G \!\times\!
\C^{*}_{w} \mapsto w\in \C^{*}_{w}$$ by the Borel construction. We have the
relation
$$L_{-\theta}=\mathcal R^{a\delta(e)}\ .$$
The coordinate functions $\vec{x}$ and $v$ of
$AY^{ss}(\theta)^{g}\times \C^{*}$ descents to be universal sections of line
bundles $\oplus_{\rho\in [n]}L_{\rho}$ and $L_{\theta}\otimes \mathcal
R^{\otimes a\delta(e)}$ over $\mathcal M_{e}$, respectively.

We will construct a universal family of $\C^{*}-$fixed quasimaps to $\P \mathfrak Y^{\frac{1}{r},p}$
of degree $(0,1^{\emptyset},\frac{\delta(e)}{r})$ over $\mathcal M_{e}$:
\begin{equation*}
  \xymatrix{
    \mathcal C_e := \P_{ar,a}(\mathcal R \oplus \mathcal O_{\mathcal M_{e}}) \ar^-{f}[r]\ar_{\pi}[d] & \P \mathfrak Y^{\frac{1}{r},p}\\
    \mathcal M_{e}:=\sqrt[a\delta(e)]{L_{-\theta}/I_{g}Y}.
  }
\end{equation*}
Then the universal curve $\mathcal C_{e}$ over $\mathcal M_{e}$ can be
represented as a quotient stack:
$$\mathcal C_{e}=[(AY^{ss}(\theta)^{g} \!\times\! \C^{*} \!\times\! U )/ (G \!\times\! \C^{*}_w
\!\times\! T)]\ ,$$ where $T=\{(t_{1},t_{2})\in (\C^{*})^{2}|\quad
t_{1}^{a}=t_{2}^{ar}\}$. The (right) action is defined by:
$$(\vec{x},v,x,y)\cdot (g,w,(t_{1},t_{2}))=(\vec{x}\cdot g,\theta(g)vw^{a\delta(e)},wt_{1}x,t_{2}y)\ ,$$
for all $(g,w,(t_{1},t_{2}))\in G \!\times\! \C^{*}_w \!\times\! T$ and
$(\vec{x},v,(x,y))\in AY^{ss}(\theta)^{g} \!\times\! \C^{*} \!\times\! U$. Then
$\mathcal C_{e}$ is a family of orbifold $\P_{ar,a}$ parameterized by $\mathcal
M_{e}$.

There are two standard characters $\chi_{1}$ and $\chi_{2}$ of $T$:
$$\chi_{1}:(t_{1},t_{2})\in T\mapsto t_{1}\in \C^{*}, \quad
\chi_{2}:(t_{1},t_{2})\in T\mapsto t_{2}\in \C^{*}\ .$$ We can lift them to be
new characters of $G \!\times\! \C^{*}_w \!\times\! T$ by composing the
projection map $\mathrm{pr}_{T}:G \!\times\! \C^{*}_w \!\times\! T\rightarrow
T$. By an abuse of notation, we continue to use $\chi_{1},\chi_{2}$ to denote
the new characters. Then $\chi_{1},\chi_{2}$ defines two line bundles
$$M_{1}:=(AY^{ss}(\theta)^{g} \!\times\! \C^{*} \!\times\! U) \times _{G \!\times\! \C^{*}_w
\!\times\! T}\; \C_{\chi_{1}}$$ and
$$M_{2}:=(AY^{ss}(\theta)^{g} \!\times\! \C^{*} \!\times\! U)\times _{G \!\times\! \C^{*}_w
\!\times\! T} \;\C_{\chi_{2}}$$ on $\mathcal C_{e}$ by the Borel construction, respectively. We
have the relation $M_{1}^{\otimes a}=M_{2}^{\otimes ar}$ on $\mathcal C_{e}$.
The universal map $f$ from $\mathcal C_{e}$ to $\P \mathfrak Y^{\frac{1}{r},p}$
can be constructed as follows: let
$$\tilde{f}: AY^{ss}(\theta)^{g} \!\times\! \C^{*} \!\times\! U\rightarrow AY \!\times\! U$$
be the morphism defined by:
\begin{equation}
  \begin{split}
    (\vec{x},v,x,y)&\in AY^{ss}(\theta)^{g} \!\times\! \C^{*} \!\times\! U \mapsto\\
    &((x_{1},\cdots,x_{n}),v^{-1}x^{a\delta(e)},y^{a\delta(e)})\in AY \!\times\!
    U\
  \end{split}
\end{equation}
Then $\tilde{f}$ is equivariant with respect to the group homomorphism from $G
\!\times\! \C^{*}_{w} \!\times\! T $ to $G \!\times\! \C^{*}$ defined by:
\begin{equation}
  \begin{split}
    (g,w,(t_{1},t_{2}))&\in G \!\times\! \C^{*}_w \!\times\! T \mapsto\\
    &(g((t_{1}^{-1}t_{2}^{r})^{p_{1}},\cdots,(t_{1}^{-1}t_{2}^{r})^{p_{k}}),t_{2}^{a\delta(e)})\in
    G \!\times\! \C^{*}\ ,
  \end{split}
\end{equation}
where the tuple $(p_{1},\cdots, p_{k})\in \mathbb N^{k}$ satisfies that
$g=(\mu_{a}^{p_{1}},\cdots,\mu_{a}^{p_{k}})\in G$. Note $\tilde{f}$ is well
defined for $\chi^{-1}_{1}\chi_{2}^{r}$ is a torsion character of $T$ of order
$a$. The above construction gives the universal morphism $f$ from $\mathcal
C_{e}$ to $\P \mathfrak Y^{\frac{1}{r},p}$ by descent.

Now we define a (quasi left) $\C^{*}-$action on $\mathcal C_{e}$ such that $f$ is
$\C^{*}-$equivariant. The $\C^{*}$-action on $\mathcal C_{e}$ is induced by
the $\C^{*}-$action on $AY^{ss}(\theta)^{g} \!\times\! \C^{*} \!\times\! U$:
$$m:\C^{*}\times AY^{ss}(\theta)^{g} \!\times\! \C^{*} \!\times\! U\rightarrow AY^{ss}(\theta) \!\times\! \C^{*} \!\times\! U\ ,$$
$$t\cdot (\vec{x},v,(x,y))= (\vec{x},v,(x,t^{\frac{-1}{ar\delta(e)}}y))\ .$$
Note then $\pi$ is $\C^{*}$-equivariant map, where $\mathcal M_{e}$ is equipped with
the trivial $\C^{*}$-action. By the universal property of the projectivized bundle $\mathcal C_e$ over $\mathcal
M_{e}$, one has a tautological section
$$ (x, y) \in H^0(\mathcal C_{e},(M_{1} \otimes \mathcal \pi^{*}\mathcal
 R) \oplus (M_{2}\otimes \C_{\frac{-1}{ar\delta(e)}}))\ ,$$
which is also a $\C^{*}-$invariant section. 

Now we can check that $f$ is a $\C^{*}-$equivariant morphism from $\mathcal
C_{e}$ to $\P \mathfrak Y^{\frac{1}{r},p}$ with respect to the $\C^{*}-$actions
for $\mathcal C_{e}$ and $\P \mathfrak Y^{\frac{1}{r},p}$. Using Remark
\ref{rmk:equivmap}, $f$ is given by the following data:
\begin{enumerate}\label{equivedge0} 
\item $k+p+1$ $\C^{*}-$equivariant line bundles $\mathcal C_{e}$:
  $$\mathcal L_{j} := \pi^*L_{\pi_{j}} \otimes (M^{\vee}_{1}\otimes M_{2}^{\otimes r})^{p_{j}},
  1\leq j\leq k\ ,$$
$$\mathcal L_{k+j}:=\mathbb C,\; 1\leq j\leq p$$
  and
  $$\mathcal N:=M_{2}^{a\delta(e)}\otimes \mathbb C_{\frac{-\lambda}{r}}\ ,$$
  where $(L_{\pi_{j}})_{1\leq j\leq k}$ are the standard $\C^{*}$-equivariant
  line bundles on $\mathcal M_{e}$ by the Borel contribution, $M_{1},M_{2}$ are
  the standard $\C^{*}$-equivariant line bundles on $C_{e}$ by the Borel
  construction;
  \item a universal section
  \begin{equation}
    \begin{split}
      (\vec{x},\vec{y},(\zeta_1, \zeta_2)) :=&
      ((x_{1},\cdots,x_{n}),1^{p},(v^{-1}x^{a\delta(e)}, y^{a\delta(e)}))\\
      &\in H^0(\mathcal C_{e}, \oplus_{i=1}^{n} \mathcal L_{\rho_{j}}\oplus (\oplus_{j=1}^{p}\mathcal L_{k+j})\oplus
      (\mathcal L_{-\theta_{p}}\otimes \mathcal N^{\otimes r}\otimes \mathbb
      C_{\lambda})\oplus \mathcal N)^{\C^{*}}\ .
    \end{split}
  \end{equation}
\end{enumerate}
Use the similar analysis as in the last section, we have that $[\mathcal
M_{e}]^{vir}=[\mathcal M_{e}]$ and the virtual normal bundle
$$e^{\C^{*}}(N^{\mathrm{vir}})=\prod_{m=1}^{\delta(e)}\bigg(\frac{m}{\delta(e)}(-D_{\theta}+\lambda)\bigg)\ .$$

when $r$ is a sufficiently large prime with the exception that the movable part of
infinitesimal automorphisms of $C_{e}$ contributes
$$\frac{\delta(e)}{\lambda-D_{\theta}}$$
to the virtual normal bundle when $a=1$.

\subsubsection{Node contributions}\label{subsubsec:node-cntr2}
The deformations in $Q^{\widetilde{\theta}}_{0,\vec{m}}(\P \mathfrak
Y^{\frac{1}{r},p},(\beta,\frac{\delta}{r}))$ smoothing a node contribute to the
Euler class of the virtual normal bundle as the first Chern class of the tensor
product of the two cotangent line bundles at the branches of the node. For nodes
at which a component $C_e$ meets a component $C_v$ over the vertex $0$, this
contribution is
\begin{equation*}
  \frac{\lambda - D_{\theta}}{a\delta(e)} - \frac{\bar{\psi}_v}{a} \ ;
\end{equation*}
for nodes at which a component $C_e$ meets a component $C_v$ over the vertex
$\infty$, this contribution is
\begin{equation*}
  \frac{-\lambda + D_{\theta }}{ar\delta(e)} -\frac{\bar{ \psi}_v}{ar} \ ;
\end{equation*}
for nodes at which two edge component $C_e$ meets with a vertex $v$ over $0$,
the node-smoothing contribution is
\begin{equation*}
  \frac{\lambda - D_{\theta}}{a\delta(e)} + \frac{\lambda - D_{\theta }}{a\delta(e')} \ .
\end{equation*}
The nodes at which
two edge component $C_e$ meets with a vertex $v$ over $\infty$ will not occur using a similar argument in \cite[Lemma
6]{Janda2017} when $r$ is sufficiently large. 


As for the node contributions from the normalization exact sequence of relative
obstruction theory \eqref{pot}, each node
$q$ (specified by a vertex $v$) contributes the inverse of Euler class of
\begin{equation}
  \label{eq:node2}
  (R^0\pi_*(\mathcal L_{\theta}^{\vee} \otimes \mathcal N^{\otimes r} \otimes \C_{\lambda})|_{q})^{\mathrm{mov}} \oplus (R^0\pi_*\mathcal N|_{q})^{\mathrm{mov}}
\end{equation}
to the Euler class of the virtual normal bundle. Note here we use the fact that
the node can't be a base point, which implies that $\mathcal
L_{\theta_{p}}|_{q}=\mathcal L_{\theta}|_{q}$.

In the case where $j(v) = 0$,  $z_2|_{q}=1$ gives a
trivialization of $\mathcal N$ at $q$. Thus, the second factor in \eqref{eq:node2}
is trivial, while the Euler class of the first factor equals
$$\frac{1}{\lambda - D_{\theta}}\ .$$

In the case where $j(v) = \infty$, $z_1|_{q}=1$ gives
a trivialization of the fiber $(\mathcal L_{\theta}^{\vee}\otimes \mathcal N^{\otimes
  r}\otimes \mathbb C_{\lambda})|_{q}$. Hence we have
$\mathcal N|_{q}\cong \mathcal L_{\theta}^{\frac{1}{r}}|_{q}\otimes \mathbb
C_{-\frac{\lambda}{r}}$, this implies that it $R^{0}\pi_{*}(\mathcal N|_{q})=0$
because of the nontrivial stacky structure when $r$ is sufficiently large. Thus there is no localization contribution from the normalization sequence at the node over $\infty$.

\subsection{Total localization contributions}
For each decorated graph $\Gamma$, denote the moduli $F_{\Gamma}$ to be the
fiber product
$$\prod_{v:j(v)=0}\mathcal M_{v}\times_{\bar{I}_{\mu}Y}\prod_{e\in E}\mathcal
M_{e}\times_{\bar{I}_{\mu}\sqrt[r]{L_{\theta}/Y}}
\prod_{v:j(v)=\infty} \mathcal M_{v}
$$ of the following
diagram:
$$\xymatrix{
  F_{\Gamma}\ar[r]\ar[d] &\prod\limits_{v:j(v)=0} \mathcal M_{v}\times
  \prod\limits_{e\in E}\mathcal M_{e}\times \prod\limits_{v:j(v)=\infty}
  \mathcal M_{v}
  \ar[d]^-{ev_{h_{0}},ev_{p_{0}}, ev_{p_{\infty}}, ev_{h'_{\infty}}}\\
  \prod\limits_{E}(\bar{I}_{\mu}Y \times
  \bar{I}_{\mu}\sqrt[r]{L_{\theta}/Y})\ar[r]^-{(\Delta\times
    \Delta^{\frac{1}{r}})^{|E|}} &\prod\limits_{E}(\bar{I}_{\mu}Y)^{2}\times
  (\bar{I}_{\mu}\sqrt[r]{L_{\theta}/Y})^{2}\ , }
$$
where $\Delta=(id,\iota)$(resp. $\Delta^{\frac{1}{r}}=(id,\iota)$) is the
diagonal map of $\bar{I}_{\mu}Y$ (resp. $\bar{I}_{\mu}\sqrt[r]{L_{\theta }/Y}$),
where $\iota$ is the involution on $\bar{I}_{\mu}Y$ (resp. $\bar{I}_{\mu}
\sqrt[r]{L_{\theta}/Y}$) by taking the inverse of the band of gerbe structure. Here when $v$ is a stable
vertex, the vertex moduli $\mathcal M_{v}$ is described in
\ref{subsubsec:ver-contr1}; when $v$ is
an unstable vertex over $0$, we treat
$\mathcal M_{v}:=\bar{I}_{m(h)}Y$ with the identical virtual cycle, where $h$ is
the half-edge incident to $v$; when $v$ is an unstable vertex over $\infty$, We treat
$\mathcal M_{v}:=\bar{I}_{m(h)}\sqrt[r]{L_{\theta}/Y}$ with the identical
virtual cycle, where $h$ is the half-edge incident to $v$.

We define $[F_{\Gamma}]^{\mathrm{vir}}$ to be:
\begin{equation*}
\begin{split}
\prod_{v:j(v)=0}[\mathcal M_{v}]^{\mathrm{vir}}
\times_{\bar{I}_{\mu}Y}\prod_{e\in E}[\mathcal
M_{e}]^{\mathrm{vir}}\times_{\bar{I}_{\mu}\sqrt[r]{L_{\theta}/Y}}
\prod_{v:j(v)=\infty}[\mathcal M_{v}]^{\mathrm{vir}}\ .
\end{split}
\end{equation*}
where the fiber product over $\bar{I}_{\mu}Y$ and
$\bar{I}_{\mu}\sqrt[r]{L_{\theta}/Y}$ imposes that the evaluation maps at the
two branches of each node (Here we adopt the convention that a node can link a unstable vertex and an edge.) agree.
Then the contribution of decorated graph $\Gamma$ to the virtual localization
is:
\begin{equation}\label{eq:auto-loc1}
Cont_{\Gamma}=\frac{\prod_{e\in
    E}a_{e}}{\text{Aut}(\Gamma)}(\iota_{\Gamma})_{*}\left(\frac{[F_{\Gamma}]^{\mathrm{vir}}}{e^{\C^{*}}(N^{\mathrm{vir}}_{\Gamma})}\right)\ .
\end{equation}
Here $\iota_{F}:F_{\Gamma}\rightarrow Q^{\widetilde{\theta}}_{0,\vec{m}}(\P \mathfrak
Y^{\frac{1}{r},p},(\beta,\frac{\delta}{r}))$ is a finite etale map of degree
$\frac{\text{Aut}(\Gamma)}{\prod_{e\in E}a_{e}}$ into the
corresponding $\C^{*}$-fixed loci in twisted graph space.
The virtual normal bundle
$e^{\C^{*}}(N^{\mathrm{vir}}_{\Gamma})$ is the
product of virtual normal bundles from vertex contributions, edge contributions
and node contributions. 

\begin{remark}
Let $u$ be a polynomial on $c_{1}(L_{\pi_{1}}),\cdots,c_{1}(L_{\pi_{k}})$. In the contribution
from the graph $\Gamma$, assume that
$j\in J_{e}$ for some edge $e$, then $\hat{ev}_{j}|_{F_{\Gamma}}$ factors through the
projection from $F_{\Gamma}$ to $\mathcal M_{e}$. By abusing notations, we
denote $\hat{ev}_{j}:\mathcal M_{e}\rightarrow \mathfrak Y$. Thus when we want to
apply virtual localization to
$\prod_{j=1}^{p}\hat{ev}^{*}_{j}(u(c_{1}(L_{\pi_{k}})))$, we replace $\mathbb I_{\beta}(q,z)$ in the edge contribution in Lemma \ref{lem:edgeloc} by
$u(c_{1}(L_{\pi_{k}})+\beta(L_{\pi_{k}})z)^{|J_{e}|}\mathbb I_{\beta}(q,z)$. Indeed, use the setting in
\S\ref{subsubsec:edgebase}, let $\underline{ev}:=\mathrm{pr}_{r,p}\circ
ev:\mathcal C_{e}\rightarrow \mathfrak Y$, where $\mathrm{pr}_{r,p}:\mathbb P
\mathfrak Y^{\frac{1}{r},p}\rightarrow \mathfrak Y$ is the
natural projection map. Then we have
$$\underline{ev}^{*}(L_{\tau})=\pi^{*}(L_{\tau}\otimes \mathcal
R^{a\beta(L_{\tau})})\otimes \mathcal O_{\mathcal C_{e}}(ar\beta(L_{\tau}))$$
for any character $\tau$ of $G$. Let $D_{0}$ be the zero section of $\mathcal
C_{e}$ over $\mathcal M_{e}$ given by $x=0$. Then
$\hat{ev}_{j}=\underline{ev}|_{D_{0}}$. Note that $\mathcal O_{\mathcal
  C_{e}}(1)|_{D_{0}}=\mathbb C_{\frac{1}{ar\delta(e)}}$. Using the fact
$\mathcal R^{a\delta(e)}=L_{-\theta}$, we have
$c_{1}(\hat{ev}^{*}(L_{\tau}))=c_{1}(L_{\tau})+\frac{\beta(L_{\tau})(\lambda-D_{\theta})}{\delta(e)}$. 
\end{remark}

\section{Master space II}\label{sec:masterspace-inf}
\subsection{Construction of master space II}\label{subsec:Construction of the master space at infinity sector}
Fix two different primes $r,s\in \mathbb N$, let $\theta$ be as in previous
section, let $\P Y_{r,s}$ be the root stack
of the $\P^{1}$ bundle $\P_{Y}(\mathcal O(-D_{\theta})\oplus \mathcal O)$ over
$Y$ by taking the $\operatorname{s-th}$ root of the zero section ($z_{1}=0$) and
$\operatorname{r-th}$ root of the infinity section ($z_{2}=0$). Then the zero
section $\mathcal D_{0}\subset \P Y_{r,s}$ is isomorphic to
$\sqrt[s]{L_{-\theta}/Y}$, and the infinity section $\mathcal D_{\infty}\subset
\P Y_{r,s}$ is isomorphic to $\sqrt[r]{L_{\theta}/Y}$.

We give a more concrete presentation of $\P Y_{r,s}$ as a quotient stack:
$$\P Y_{r,s}=[(\C^{*} \!\times\! AY^{ss}(\theta) \!\times\! U)/ 
(G \!\times\! \C^{*}_{\alpha} \!\times\! \C^{*}_{t}) ]\ ,$$ where the (right) $G
\!\times\! \C^{*}_{\alpha} \!\times\! \C^{*}_{t}$-action on $\C^{*} \!\times\!
AY^{ss}(\theta) \!\times\! U$ is given by:
$$(u,\vec{x},z_{1},z_{2})\cdot(g,\alpha,t)=( \alpha^{-s}\theta(g)^{-1}t^{r}
u, \vec{x}g,\alpha z_{1},tz_{2})\ ,$$ for $(g,\alpha,t)\in G
\!\times\!\C^{*}_{\alpha} \!\times\! \C^{*}_{t} ,\text{and}\; (u,
\vec{x},z_{1},z_{2})\in \C^{*} \!\times\! AY^{ss}(\theta)\!\times\! U$. Here
$U=\C^{2}\backslash \{0\}$. This quotient stack presentation of $\P Y_{r,s}$ comes from the root
stack construction in \cite[Appendix B]{Dan_Abramovich_2008} after
some simplification.

The inertia stack $I_{\mu} \P Y_{r,s} $ of $\P Y_{r,s}$ admits a decomposition
$$I_{\mu} \P Y\sqcup\bigsqcup_{i=1}^{s-1} \sqrt[s]{L_{-\theta}/Y} \sqcup\bigsqcup_{j=1}^{r-1}
\sqrt[r]{L_{\theta}/Y}\ .$$

Let $(\vec{x},(g,\alpha,t))$ be a point of the inertia stack $I_{\mu}\P
Y_{r,s}$, if the point $(\vec{x},(g,\alpha,t))$ appears in the first factor of
the decomposition above, then the automorphism $(g,\alpha,t)$ lies in $G
\!\times\! \{1\}\!\times\! \{1\}$; if the point $(\vec{x},(g,\alpha,t))$ occurs
in the second factor of the decomposition above, then the automorphism
$(g,\alpha,t)$ lies in $G \!\times\! \{\mu_{s}^{i}:1\leq i\leq s-1\} \!\times\!
\{1\}\subset G \!\times\! \C^{*}_{\alpha} \!\times\! \C^{*}_{t}$, and the point
$\vec{x}$ is in the zero section $\mathcal D_{0}$ defined by
$z_{1}=0$; finally if the point $(\vec{x},(g,\alpha,t))$ shows in the third
factor of the decomposition above, then the automorphism $(g,\alpha,t)$ lies in
$G \!\times\! \{1\} \!\times\! \{\mu_{r}^{j}: 1\leq i\leq r-1\}\subset G
\!\times\! \C^{*}_{\alpha} \!\times\! \C^{*}_{t}$, and $\vec{x}$
is in the infinity section $\mathcal D_{\infty}$ defined by $z_{2}=0$. Here
$\mu_{r}=\mathrm{exp}(\frac{2\pi \sqrt{-1}}{r})\in \C^{*}$ and
$\mu_{s}=\mathrm{exp}(\frac{2 \pi \sqrt{-1}}{s})\in \C^{*}$.

Fix $(g, \alpha,t)\in G \!\times\! \boldsymbol{\mu}_{s} \!\times\!
\boldsymbol{\mu}_{r}$, we will use the notation $\bar{I}_{(g,\alpha,t)}\P
Y_{r,s}$ to mean the rigidified inertia stack component of $\bar{I}_{\mu}\P
Y_{r,s} $ which has automorphism $(g,\alpha,t)$. Note if $\alpha$ and $t$ are
not equal to $1$ simultaneously, then the corresponding rigidified inertia stack
component is empty.

Let $\mathcal K_{0,m}(\P Y_{r,s}, (d,\frac{\delta}{r}))$ be the moduli stack of
$m$-pointed twisted stable maps to $\P Y_{r,s}$ of degree $(d,\frac{\delta}{r})$.
More concretely, More concretely,
\[\mathcal K_{0,m}(\P Y_{r,s},(d,\frac{\delta}{r})) = \{(C;q_1, \ldots,
  q_m; L_1,\cdots,L_{k}, N_{1},N_{2}; u,\vec{x}:= (x_1, \ldots, x_m), z_1, z_2)\},\]
where $(C; q_1, \ldots, q_m)$ is a $m$-pointed prestable balanced twisted curve
of genus $0$ with nontrivial isotropy only at special points, $(L_j:1\leq j\leq
k)$ and $N_{1},N_{2}$ are orbifold line bundles on $C$ with
\begin{equation*}
  \label{eq:TGSdegrees3}
  deg([\vec{x}])=d\in Hom(Pic(\mathfrak Y),\mathbb Q), \; \; \; \mathrm{deg}(N_{2})=\frac{\delta}{r},
\end{equation*}
and
\[(u, (\vec{x}, \vec{z})) := (u, x_1, \ldots, x_n, z_1, z_2) \in
  \Gamma\left(\big((N^{\vee}_{1})^{\otimes s}\!\otimes\! L_{-\theta}\!\otimes\!
    N_{2}^{\otimes r}\big)\oplus \bigoplus_{i=1}^{n} L_{\rho_{i}} \oplus N_{1}\oplus
    N_{2}\right).\] Here, for $1\leq i\leq n$, the line bundle $L_{\rho_{i}}$ is
equal to
$$\otimes_{j=1}^{k}L_{j}^{m_{ij}}\ ,$$
where $(m_{ij})_{1\leq i\leq n,\;1\leq j\leq k }$ is given by the relation
$\rho_{i}=\sum _{j=1}^{k}m_{ij}\pi_{j}$. The same construction applies to the
line bundle $L_{-\theta}$ on $C$. Note here $\delta$ is an integer when
$\mathcal K_{0,m}(\P Y_{r,s},(d,\frac{\delta}{r}))$ is nonempty as $N_{2}^{\otimes
  r}$ is the pullback of some line bundle on the coarse moduli curve
$\underline{C}$.

We require this data to satisfy the following conditions:
\begin{itemize}
\item {\it Representability}: For every $q \in C$ with isotropy group $G_q$, the
  homomorphism $\mathbb B G_q \rightarrow \mathbb B(G \!\times\! \C^{*}_{\alpha} \!\times\! \C^{*}_{t})$ given by the restriction of line bundles
  $(L_{j}:1\leq j\leq k)$ and $N_{1},N_{2}$ on $q$ is representable.
\item {\it Nondegeneracy}: The sections $z_1$ and $z_2$ never simultaneously
  vanish, and we have
  \begin{equation}
    \label{eq:stab3}
    \text{ord}_q(\vec{x}) = 0.
  \end{equation}
  for all $q\in C$. Furthermore, the section $u$ never vanish, so we have
  $(N^{\vee}_{1})^{\otimes s}\!\otimes\! L_{-\theta}\!\otimes\! N_{2}^{\otimes
    r}\cong \mathcal O_{C}$.
\item {\it Stability}: the map
  $[u,\vec{x},\vec{z}]:(C,q_{1},\cdots,q_{m})\rightarrow \P Y_{r,s}$ satisfies
  the usual stability condition defined by a twisted stable map; 
\item{\it Vanishing}: The image of $[\vec{x}]:C\rightarrow \mathfrak X$ lies in
  $\mathfrak Y$.
\end{itemize}

Let $\vec{m}=(v_{1},\cdots,v_{m})\in (G \!\times\! \boldsymbol{\mu}_{s}
\!\times\! \boldsymbol{\mu}_{r})^{m}$, we will denote $\mathcal K_{0,\vec{m}}(\P
Y_{r,s},(d,\frac{\delta}{r}))$ to be:
$$\mathcal K_{0,m}(\P Y_{r,s},(d,\frac{\delta}{r}))\cap ev_{1}^{-1}(\bar{I}_{v_{1}}\P
 Y_{r,s})\cap \cdots \cap ev_{m}^{-1}(\bar{I}_{v_{m}}\P Y_{r,s} ) \ ,$$
where 
\begin{equation*}
  ev_i: \mathcal K_{0,\vec{m}}(\P Y_{r,s},(d,\frac{\delta}{r})) \rightarrow \bar{I}_{\mu}\P Y_{r,s},
\end{equation*}
are natural evaluation maps as before, by evaluating the sections $(u,\vec{x},\vec{z})$ at $q_i$.
\subsection{$\C^{*}$-action and fixed loci}
Define a (left) $\C^{*}$-action on $\C^{*} \!\times\! AY^{ss}(\theta) \!\times\!
U$ given by
$$t\cdot (u, \vec{x},(z_{1},z_{2}))=(tu, \vec{x},(z_{1},z_{2}))\ .$$
This action descends to be a (left) $\C^{*}$-action on $\P Y_{r,s}$, which
induces a $\C^{*}$-action on $\mathcal K_{0,\vec{m}}(\P
Y_{r,s},(d,\frac{\delta}{r}))$. The reason why we define this action is that
this definition lifts the $\C^{*}$-action on $\P Y$ defined in \S\ref{subsec:Construction of the master space} along the canonical structure map
$\pi_{r,s}: \P Y_{r,s}\rightarrow \P Y$. We will let $\lambda$ to be equivariant
parameter corresponding to the action of weight $1$.

First we state a similar criteria for maps to $\P Y_{r,s}$ to be
$\C^{*}$-equivariant as in Remark \ref{rmk:equivmap}.
\begin{remark}\label{rmk:equivmap3}
  Given a stack $S$ over $Spec(\mathbb C)$ equipped with a (left)
  $\C^{*}$-action, then a $\C^{*}$-equivariant morphism from $S$ to $\P Y_{r,s}$
  is equivalent to that the following: there exists $k+2$ $\C^{*}$-equivariant
  line bundles on $S$
  $$L_{1},\cdots, L_{k},N_{1},N_{2}$$
  together with sections invariant under the $\C^{*}$-action;
  \begin{equation}
    \begin{split}
      (u, \vec{x}, \vec{z}) := &(u, x_1, \ldots, x_n, z_1, z_2)\\
      &\in \Gamma\left(\big((N^{\vee}_{1})^{\otimes s}\otimes
        L_{-\theta}\otimes N_{2}^{\otimes r}\otimes \C_{\lambda}\big)\oplus
        \bigoplus_{i=1}^{n} L_{\rho_{i}} \oplus N_{1} \oplus
        N_{2}\right)^{\C^{*}}.
    \end{split}
  \end{equation}
  Here the line bundles $L_{\rho_{i}}$ and $L_{-\theta}$ are defined similarly
  as before. The sections should satisfy the vanishing condition imposed by the
  affine cone of $Y$ in the definition of stable maps to $\P Y_{r,s}$.
\end{remark}

Fix a nonzero degree $\beta\in \mathrm{Eff}(W,G,\theta)$, and two tuples of
nonnegative integers
$$(\delta_{1},\cdots,\delta_{m})\in \mathbb N^{m}$$
and
$$(\delta'_{1},\cdots,\delta'_{m})\in \mathbb N^{m}\ .$$
Consider the tuple of multiplicities $\vec{m}=(v_{1},\cdots,v_{m})\in (G\times
\boldsymbol{\mu}_{r})^{m}$, where
$v_{i}=(g_{i},\mu^{\delta'_{i}}_{s},\mu_{r}^{\delta_{i}})$, we will denote
$\mathcal K_{0,\vec{m}}(\P Y_{r,s},(\beta,\frac{\delta}{r}))$ to be
$$\bigsqcup_{\substack{d\in \mathrm{Eff}(AY,G,\theta)\\ (i_{\mathfrak
      Y})_{*}(d)=\beta }}\mathcal K_{0,\vec{m}}(\P
Y_{r,s},(d,\frac{\delta}{r}))\ ,$$ where $i_{\mathfrak Y}: \mathfrak
Y\rightarrow \mathfrak X$ is the inclusion morphism. Thus $\mathcal
K_{0,\vec{m}}(\P Y_{r,s},(\beta,\frac{\delta}{r}))$ inherits a $\C^{*}$-action
as above.

We will follow the presentation
of~\cite{clader17_higher_genus_wall_cross_gauged,
  clader17_higher_genus_quasim_wall_cross_via_local} to describe the virtual localization for $\mathcal K_{0,\vec{m}}(\P
Y_{r,s}, (\beta,\frac{\delta}{r}))$ similar to
$Q^{\widetilde{\theta}}_{0,\vec{m}}(\P \mathfrak
Y^{\frac{1}{r},p},(\beta,\frac{\delta}{r}))$, but the edge contribution is easier
to analyze as there is no basepoint occurring for twisted stable maps.

We index the components of $\C^{*}-$fixed loci of $\mathcal K_{0,\vec{m}}(\P
Y_{r,s},(\beta,\frac{\delta}{r}))$ by
decorated graphs. A decorated graph 
$\Gamma$ consists of vertices, edges, and $m$ legs with the following decorations it:
\begin{itemize}
\item Each vertex $v$ is associated with an index $j(v) \in \{0, \infty\}$, and a degree
  $\beta(v) \in \mathrm{Eff}(W,G,\theta)$.
\item Each edge $e=\{h,h'\}$ is equipped with a degree $\delta(e) \in \mathbb{N}$.
\item Each half-edge $h$ and each leg $l$ has an element $m(h)$ or $m(l)$ in $G \!\times\! \boldsymbol{\mu}_{s}\times \boldsymbol{\mu}_{r}$.
\item The legs are labeled with the numbers $\{1, \ldots, m\}$.
\end{itemize}
By the ``valence" of a vertex $v$, denoted $\text{val}(v)$, we mean the total
number of incident half-edges, including legs.

The fixed locus in $\mathcal K_{0,\vec{m}}(\P Y_{r,s},(\beta,\frac{\delta}{r}))$
indexed by the decorated graph $\Gamma$ parameterizes stable map of the following
type:
\begin{itemize}
\item Each edge $e$ corresponds to a genus-zero component $C_e$ on which
  $\deg(N_{2}) = \frac{\delta(e)}{r}$ for some integer $\delta(e)\in \Z_{>0}$,
  where there are two distinguished points $q_{0}$ and $q_{\infty}$ on $C_{e}$
  satisfying that $z_2|_{q_{\infty}}=0$ and $z_1|_{q_{0}}=0$, respectively. We
  call them the ``ramification points". Note that we have
  $deg(L_{j}|_{C_{e}})=0$ for all $1\leq j\leq k$.
\item Each vertex $v$ for which $j(v) = 0$ (with unstable exceptional cases noted below) corresponds to a maximal sub-curve $C_v$ of $C$ over which $z_1
  \equiv 0$, then the restriction of $(C; q_1, \ldots, q_m;
  L_1,\cdots,L_{k};\vec{x})$ to $C_{v}$ defines a twisted stable map in
  $$\mathcal
  K_{0,val(v)}(\sqrt[s]{L_{-\theta}/Y},\beta(v)):=\bigsqcup_{\substack{d\in
      \mathrm{Eff}(AY,G,\theta)\\(i_{\mathfrak Y})_{*}(d)=\beta(v)}}\mathcal
  K_{0,val(v)}(\sqrt[s]{L_{-\theta}/Y},d)\ .$$
  Each vertex $v$ for which $j(v)
  = \infty$ (again with unstable exceptions) corresponds to a maximal sub-curve
  for which $z_2 \equiv 0$, then the restriction of $(C; q_1, \ldots, q_m;
  L_1,\cdots,L_{k};\vec{x})$ to $C_{v}$ defines a twisted stable map in
  $$\mathcal
  K_{0,val(v)}(\sqrt[r]{L_{\theta}/Y},\beta(v)):=\bigsqcup_{\substack{d\in
      \mathrm{Eff}(AY,G, \theta)\\(i_{\mathfrak Y})_{*}(d)=\beta(v)}}\mathcal
  K_{0,val(v)}(\sqrt[r]{L_{\theta}/Y},d)\ .$$
  The label $\beta(v)$ denotes the
  degree coming from the restriction $[x]|_{C_{v}}:C_{v}\rightarrow \mathfrak X$. Note here we
  count the degree $\beta(v)$ in $\mathrm{Eff}(W,G,\theta)$, but not in
  $\mathrm{Eff}(AY,G,\theta)$.
\item A vertex $v$ is {\it unstable} if stable twisted maps of the type
  described above do not exist (where, as always, we interpret legs as marked
  points and half-edges as half-nodes). In this case, $v$ corresponds to a
  single point of the component $C_e$ for each adjacent edge $e$, which may be a
  node at which $C_e$ meets $C_{e'}$, a marked point of $C_e$, or an unmarked
  point.
\item The index $m(l)$ on a leg $l$ indicates the rigidified inertia stack
  component $\bar{I}_{m(l)}\P Y_{r,s}$ of $\P Y_{r,s}$ on which the marked point
  corresponding to the leg $l$ is evaluated, this is determined by the
  multiplicity of $L_{1},\cdots, L_{k}, N_{1},N_{2}$ at the corresponding marked
  points.
\item A half-edge $h$ incident to a vertex $v$ corresponds to a node at which
  components $C_e$ and $C_v$ meet, and $m(h)$ indicates the rigidified inertia
  component $\bar{I}_{m(h)}\P Y_{r,s}$ of $\P Y_{r,s}$ on which the node on
  $C_{v}$ is evaluated, this is determined by the multiplicity of
  $L_{1},\cdots, L_{k}, N_{1},N_{2}$ at the corresponding node. If $v$
  is unstable and hence $h$ corresponds to a single point on a component $C_e$,
  then $m(h)$ is the {\it inverse} in $G \!\times\!
  \boldsymbol{\mu_{s}}\!\times\! \boldsymbol{\mu}_{r}$ of the multiplicity of
  $L_1,\cdots, L_{k},N_{1},N_{2}$ at this point.
\end{itemize}
In particular, we note that the decorations at each stable vertex $v$ yield a
vector
$$\vec{m}(v) \in (G\times \boldsymbol{\mu}_{s}\times \boldsymbol{\mu}_{r})^{\text{val}(v)}$$
recording the multiplicities of $L_1,\cdots, L_{k},N_{1},N_{2}$ at every special
point of $C_v$.

\begin{remark}\label{rmk:edge3*}
  For each edge $e$, the restriction of $\vec{x}$ to $C_e$ defines a constant
  map to $Y$. So the restriction of $(u, \vec{x},\vec{z})$ to $C_{e}$ defines a
  representable map
  $$f: C_{e}\rightarrow  \mathbb B G_{y}\times \P_{r,s}$$
  where $y\in Y$ comes from $\vec{x}$ and $G_{y}$ is the isotropy group of $y\in
  Y$. Then we have $m(q_{0})=(g^{-1},\mu_{s}^{\delta(e)},1)$ and
  $m(q_{\infty})=(g,1,\mu_{r}^{\delta(e)})$ for some $g\in G_{y}$. Denote $a$ to
  be the order of element $g\in G$. Note when $r$ and $s$ are sufficiently large
  primes comparing to $\delta(e)$, we must have $C_{e}\cong \P^{1}_{ar,as}$ and
  $q_{0}$ and $q_{\infty}$ are special points as they are nontrivial stacky
  points. Here $\P^{1}_{ar,as}$ is the unique Deligne-Mumford stack with coarse
  moduli $\P^{1}$, isotropy group $\boldsymbol{\mu}_{as}$ at $0\in \P^{1}$,
  isotropy group $\boldsymbol{\mu}_{ar}$ at $\infty\in \P^{1}$, and generic
  trivial stabilizer.
\end{remark}


\subsection{Localization analysis}
Fix $\beta\in \operatorname{Eff(W,G,\theta)},\delta\in \Z_{\geq 0}$ and $\vec{m}=(v_{1},\cdots,v_{m})\in (G \!\times\! \boldsymbol{\mu}_{s} \!\times\!
\boldsymbol{\mu}_{r})^{m}$, we will consider the space $\mathcal
K_{0,\vec{m}}(\P Y_{r,s},(\beta,\frac{\delta}{r}))$. The reason why we assume
that the second degree is $\frac{\delta}{r}$ is that $\mathcal K_{0,[m]}(\P
Y_{r,s},(\beta,\frac{\delta}{r}))$ admits a natural morphism to $\mathcal
K_{0,[m]}(\P Y,(\beta,\delta))$(c.f.\cite{Andreini_2015, tang16_quant_leray_hirsc_theor_banded_gerbes}). Here $\P Y$ is equal to $\P Y_{r,s}$ for $r=s=1$. In this section, we will always assume that $r$ and $s$ are \emph{sufficiently
  large primes}. 

Now we analyze the $\C^{*}-$localization contribution for $\mathcal
K_{0,\vec{m}}(\P Y_{r,s},(\beta,\frac{\delta}{r}))$ as in \S\ref{subsec:local-ana1}.  
\subsubsection{Vertex contributions}\label{subsubsec:ver-contr2}
The analysis of localization contribution for the stable vertex $v$ is similar
to the analysis in \S\ref{subsubsec:ver-contr1}.

For the stable vertex $v$ over $\infty$, the vertex moduli $\mathcal M_{v}$
corresponds to the moduli stack
$$\mathcal K_{0,\vec{m}(v)}(\sqrt[r]{L_{\theta}/Y},\beta(v)):=\bigsqcup_{\substack{d\in
    \mathrm{Eff}(AY,G,\theta)\\(i_{\mathfrak Y})_{*}(d)=\beta(v)}}\mathcal
K_{0,\vec{m}(v)}(\sqrt[r]{L_{\theta}/Y},d)\ ,$$ which parameterizes twisted
stable maps to the root gerbe $\sqrt[r]{L_{\theta}/Y}$ over $Y$.

Let
$$\pi:\mathcal C_{\infty}\rightarrow \mathcal
K_{0,\vec{m}(v)}(\sqrt[r]{L_{\theta}/Y},\beta(v))$$ be the universal curve over
$\mathcal K_{0,\vec{m}(v)}(\sqrt[r]{L_{\theta}/Y},\beta(v))$. Follow the same
discussion in \S\ref{subsubsec:ver-contr1}, the \emph{inverse of the Euler
  class} of the virtual normal bundle for the vertex moduli $\mathcal M_{v}$
over $\infty$ is equal to
\begin{equation*}
  e^{\C^{*}}((-R^{\bullet}\pi_{*}\mathcal L_{\theta}^{\frac{1}{r}})\otimes \C_{-\frac{\lambda}{r}})\ .
\end{equation*}
When $r$ is a sufficiently large prime,
following~\cite{janda18_doubl_ramif_cycles_with_target_variet}, the above Euler
class has a representation
$$\sum_{d\geq 0}c_{d}(-R^{\bullet}\pi_{*}\mathcal
L_{\theta}^{\frac{1}{r}})(\frac{-\lambda}{r})^{|E(v)|-1-d}\ .$$ Here the
virtual bundle $-R^{\bullet}\pi_{*}\mathcal L^{\frac{1}{r}}_{\theta}$ has
virtual rank $|E(v)|-1$, where $|E(v)|$ is the number of edges incident to the
vertex $v$. The fixed part of the obstruction theory contributes to the virtual
cycle
$$[\mathcal K_{0,\vec{m}(v)}(\sqrt[r]{L_{\theta}/Y},\beta(v))]^{\mathrm{vir}}\ .$$

For the stable vertex $v$ over $0$, the vertex moduli $\mathcal M_{v}$
corresponds to the moduli space
$$\mathcal K_{0,\vec{m}(v)}(\sqrt[s]{L_{-\theta}/Y},\beta(v)):=\bigsqcup_{\substack{d\in
    \mathrm{Eff}(AY,G,\theta)\\(i_{\mathfrak Y})_{*}(d)=\beta(v)}}\mathcal
K_{0,\vec{m}(v)}(\sqrt[s]{L_{-\theta}/Y},d)\ .$$

Let $$\pi:\mathcal C_{0}\rightarrow \mathcal
K_{0,\vec{m}(v)}(\sqrt[s]{L_{-\theta}/Y},\beta(v))$$ be the universal curve
over $\mathcal K_{0,\vec{m}(v)}(\sqrt[s]{L_{-\theta}/Y},\beta(v))$, and
$f:\mathcal C_{0}\rightarrow \sqrt[s]{L_{-\theta}/Y}$. In this case, the fixed
part of the perfect obstruction theory for the vertex moduli over $0$ yields the
virtual cycle
$$[\mathcal K_{0,\vec{m}(v)}(\sqrt[s]{L_{-\theta}/Y},\beta(v))]^{\mathrm{vir}}\ .$$

Note $\mathcal N_{2}|_{\mathcal C_{0}}\cong \mathcal O_{\mathcal C_{0}}$ as
$z_{2}|_{\mathcal C_{0}}\equiv 1$, the virtual normal bundle comes from the
movable part of the infinitesimal deformations of $z_{1}$, which is a section of
the line bundle $\mathcal L^{\frac{1}{s}}_{-\theta}$ over $\mathcal C_{0}$,
which is the pullback of the universal $s-$th root line bundle on
$\sqrt[s]{L_{-\theta}/Y}$ via the universal map $f$. Then the \emph{inverse of
  the Euler class} of the virtual normal bundle is equal to
$$e^{\C^{*}}((-R^{\bullet}\pi_{*}\mathcal L^{\frac{1}{s}}_{-\theta })\otimes
\C_{\frac{\lambda}{s}})\ .$$

We will simplify the above presentation when $\beta(v)\neq 0$. First, we will
state a simple vanishing lemma regarding a line bundle of negative degree on a
genus zero twisted curve, of which the proof is proceeded by induction on the
number of connected components.
\begin{lemma}\label{lem:vanishH0}
  Let $L$ be a line bundle of negative degree on a genus zero twisted curve $C$.
  Assume that the degree of the restriction of the line bundle $L|_{C_{i}}$ to
  every irreducible component $C_{i}$ is non-positive. Then we have
  $H^{0}(C,L)=0\ .$
\end{lemma}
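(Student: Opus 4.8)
The plan is to prove Lemma~\ref{lem:vanishH0} by induction on the number of irreducible components of $C$, following the standard "peeling off a leaf" strategy for genus zero nodal (twisted) curves. First I would set up the base case: if $C$ is irreducible, then $C$ is a genus zero twisted curve with coarse space $\mathbb P^1$, and a line bundle $L$ of negative degree on such a curve has no global sections, since $H^0(C,L)=H^0(\underline C,\phi_*L)$ and $\phi_*L$ is a line bundle (or torsion-free sheaf of rank one) of negative degree on $\mathbb P^1$; here $\phi:C\to\underline C$ is the coarse moduli map and pushing forward along $\phi$ is exact and does not increase degree (in fact $\deg\phi_*L=\lfloor\deg L\rfloor<0$). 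This handles the start of the induction.

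For the inductive step, suppose $C$ has $N\geq 2$ irreducible components. Since $C$ has arithmetic genus zero, its dual graph is a tree, so there exists a component $C_0$ meeting the rest of the curve $C'=\overline{C\setminus C_0}$ in exactly one node $q$. Write the normalization sequence
\begin{equation*}
0\longrightarrow L\longrightarrow L|_{C_0}\oplus L|_{C'}\longrightarrow L|_{q}\longrightarrow 0,
\end{equation*}
which after taking global sections gives the left-exact sequence
\begin{equation*}
0\longrightarrow H^0(C,L)\longrightarrow H^0(C_0,L|_{C_0})\oplus H^0(C',L|_{C'}).
\end{equation*}
By hypothesis $\deg(L|_{C_0})<0$ (it is non-positive, and I will need to rule out the degree-zero case — see below), so $H^0(C_0,L|_{C_0})=0$ by the irreducible case, while $C'$ is again a genus zero twisted curve all of whose components carry $L$ in non-positive degree, but $C'$ need not have \emph{negative} total degree. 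So I would instead argue more carefully: apply the inductive hypothesis not to $C'$ with $L|_{C'}$ directly, but observe that any section of $L$ restricts to a section of $L|_{C'}$ vanishing at $q$ (because it came from $L|_{C_0}$ which is zero), i.e. lands in $H^0(C',L|_{C'}(-q))=H^0(C',L|_{C'}\otimes\mathcal O_{C'}(-q))$; twisting down by the node strictly decreases the degree on the component $C_1\subset C'$ adjacent to $q$, so $L|_{C'}(-q)$ now has negative degree on every component, hence has total negative degree, and the inductive hypothesis applies to $(C',L|_{C'}(-q))$ to give $H^0(C',L|_{C'}(-q))=0$. Therefore $H^0(C,L)=0$.

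The one genuinely delicate point — and the main obstacle — is the hypothesis wording: the lemma assumes the restriction to every component has \emph{non-positive} degree, not strictly negative, while the global degree is negative. In the inductive step I chose a leaf component $C_0$, but a priori $\deg(L|_{C_0})$ could be $0$ rather than negative, so $H^0(C_0,L|_{C_0})$ could be one-dimensional. To handle this I would choose the leaf component more cleverly: since $\deg L=\sum_i\deg(L|_{C_i})<0$ and all summands are $\le 0$, at least one component has strictly negative degree; I then orient the tree and pick a leaf $C_0$ such that the path from $C_0$ toward a negative-degree component is respected, or more simply, I induct on $N$ with the following strengthened statement: \emph{if $C$ is a connected genus zero twisted curve, $L$ a line bundle with $\deg(L|_{C_i})\le 0$ for all $i$ and $\deg(L|_{C_i})<0$ for at least one $i$, then $H^0(C,L)=0$}. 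With a leaf $C_0$: if $\deg(L|_{C_0})<0$ the argument above works verbatim; if $\deg(L|_{C_0})=0$, then a section of $L$ restricts on $C_0$ to a section of a degree-zero bundle on $\mathbb P^1$-orbifold, which is constant, and its value at $q$ determines whether it extends — pushing to $C'$, the section restricted to $C'$ lies in $H^0(C',L|_{C'})$ with no vanishing imposed, but now $C'$ still has a strictly-negative component (since $C_0$ was degree zero, the negative component is in $C'$), so by induction $H^0(C',L|_{C'})=0$, forcing the constant on $C_0$ to be zero as well. Assembling these cases closes the induction; this case-analysis on the leaf degree is the only real subtlety, everything else is the routine normalization-sequence bookkeeping.
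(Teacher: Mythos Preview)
Your proposal is correct and follows exactly the approach the paper indicates: the paper gives no detailed proof of this lemma, only the one-line remark that ``the proof is proceeded by induction on the number of connected components'' (evidently meaning irreducible components), and your leaf-peeling induction via the normalization sequence is the standard way to execute that hint. One minor slip: in your first pass you assert that $L|_{C'}(-q)$ has \emph{negative} degree on every component of $C'$, which is not true in general (components of $C'$ not containing $q$ retain their original, possibly zero, degree); but this is harmless, since your strengthened inductive statement only needs non-positive degree everywhere and strictly negative on at least one component, and your subsequent case analysis on $\deg(L|_{C_0})$ handles everything correctly.
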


\begin{remark}\label{rmk:vanishH0}
  For every fiber curve $C_{0}$ of the universal curve $\mathcal C_{0}$ over
  $\mathcal M_{v}$. The degree of the restricted line bundle $\mathcal L_{-\theta
  }^{\frac{1}{s}}|_{C_{0}}$ to $C_{0}$ is non-positive. Indeed, $\mathcal L_{-\theta
  }^{\frac{1}{s}}$ is the pullback of the $s$-th root of the line bundle
  $L_{-\theta}$ on $\sqrt[s]{L_{-\theta}/Y}$, where $L_{-\theta}$ is the
  pullback of an \emph{anti-ample} line bundle on the coarse moduli of
  $\sqrt[s]{L_{-\theta}/Y}$. Now assuming $\beta(v)\neq 0$, we have the degree
  of the restricted line bundle $\mathcal L_{-\theta }^{\frac{1}{s}}|_{C_{0}}$
  is negative by Lemma \ref{lem:2.3}. By the above lemma, one has
  $$R^{0}\pi_{*}\mathcal L^{\frac{1}{s}}_{-\theta }=0\ .$$
  Then we have
  $$-R^{\bullet}\pi_{*}\mathcal L^{\frac{1}{s}}_{-\theta
  }=R^{1}\pi_{*}\mathcal L^{\frac{1}{s}}_{-\theta}\ ,$$ which implies that
  $R^{1}\pi_{*}\mathcal L^{\frac{1}{s}}_{-\theta}$ is a vector bundle. When $s$
  is sufficiently large, it has rank $|E(v)|-1$ where $|E(v)|$ is the number of
  edges incident to the vertex $v$. Especially when $|E(v)|=1$, it has rank $0$,
  thus the Euler class becomes $1$, this case will be important in the later
  simplification of the localization contribution in \S\ref{subsec:intr2}.
\end{remark}

\subsubsection{Edge contributions}
Assume that the multiplicity at $q_{\infty}\in C_{e}$ is equal to
$(g,\mu_{s}^{\delta(e)},1)\in G \!\times\! \boldsymbol{\mu}_{s} \!\times\!
\boldsymbol{\mu}_{r}$ and $a$ (or $a_{e}$) is the order of $g\in G$. When $r,s$ is
sufficiently large primes, due to Remark \ref{rmk:edge3*}, $C_{e}$ must be
isomorphic to $\P^{1}_{ar,as}$ where the ramification point $q_{0}$ for which
$z_{1}=0$ is isomorphic to $\mathbb B\mu_{as}$, and the ramification point
$q_{\infty}$ for which $z_{2}=0$ is isomorphic to $\mathbb B\mu_{ar}$. The
restriction of the degree $(\beta,\frac{\delta}{r})$ from $C$ to $C_{e}$ is
equal to $(0,\frac{\delta(e)}{r})$, which is equivalent to:
$$deg(L_{j}|_{C_{e}})=0,\quad \textit{for}\quad \,1\leq j\leq k,\quad deg(N_{2}|_{C_{e}})=\frac{\delta(e)}{r}\ .$$
Recall that the inertia stack component $I_{g}Y$ of $I_{\mu}Y$ is isomorphic to
$$[AY^{ss}(\theta)^{g}/G]\ .$$
We define the edge moduli $\mathcal M_{e}$ to be
$$\sqrt[as\delta(e)]{L_{-\theta}/I_{g}Y}=\sqrt[as\delta(e)]{L_{-\theta}/[AY^{ss}(\theta)^{g}/G]}\ ,$$
which is the $as\delta(e)$th root gerbe over the inertia stack component
$I_{g}Y$ of $I_{\mu}Y$ by taking the $as\delta(e)$th root of the line bundle
$L_{-\theta}$.

The root gerbe $\sqrt[as\delta(e)]{L_{-\theta}/I_{g}Y}$ admits a representation
as a quotient stack:
\begin{equation*}\label{eq:edge3}
  [AY^{ss}(\theta)^{g} \!\times\! \C^{*} / (G \!\times\! \C^{*}_{w})],
\end{equation*}
where the (right) action is defined by:
$$(\vec{x},v)\cdot (g,w)=(\vec{x}g,\theta(g)^{-1}vw^{-as\delta(e)})\ ,$$
for all $(g,w)\in G \!\times\! \C^{*}_{w}$ and $(\vec{x},v)\in
AY^{ss}(\theta)^{g} \!\times\! \C^{*}$. For every character $\rho$ of $G$, we
can define a new character of $G \!\times\! \C^{*}_{w}$ by composing the
projection map $\mathrm{pr}_{G}:G \!\times\! \C^{*}_{w}\rightarrow G$, we will
still use $\rho$ to name the new character of $G \!\times\! \C^{*}_{w} $ by an
abuse of notation. Then $\rho$ will determines a line bundle
$L_{\rho}:=[(AY^{ss}(\theta)^{g}\!\times\! \C^{*} \!\times\! \mathbb C_{\rho})
/(G \!\times\!\C^{*}_{w})]$ on $\sqrt[as\delta(e)]{L_{-\theta}/I_{g}Y}$ by the
Borel construction.

By virtue of the universal property of root gerbe, on $\mathcal
M_{e}=\sqrt[as\delta(e)]{L_{-\theta}/I_{g}Y}$, there is a universal line bundle
$\mathcal R$ that is the $as\delta(e)$th root of the line bundle $L_{-\theta}$.
The root bundle $\mathcal R$ is determined by the character
$\mathrm{pr}_{\C^{*}}$:
$$\mathrm{pr}_{\C^{*}}:G \!\times\! \C^{*}_{w} \rightarrow \C^{*}_{w} \quad (g,w)\in G \!\times\!
\C^{*}_{w} \mapsto w\in \C^{*}_{w}\ .$$ We have the relation
$$L_{-\theta}=\mathcal R^{as\delta(e)}\ .$$
The coordinate functions $\vec{x}$ and $v$ of
$AY^{ss}(\theta)^{g}\times \C^{*}$ descents to be universal sections of line
bundles $\oplus_{\rho\in [n]}L_{\rho}$ and $L_{-\theta}\otimes \mathcal
R^{-\otimes as\delta(e)}$ over $\mathcal M_{e}$, respectively.

We will construct a universal family of $\C^{*}-$fixed twisted stable maps to $\P Y_{r,s}$ of degree
$(0, \frac{\delta(e)}{r})$ over $\mathcal M_e$:
\begin{equation*}
  \xymatrix{
    \mathcal C_e := \P_{ar,as}(\mathcal R \oplus \mathcal O_{\mathcal M_{e}}) \ar^-{f}[r]\ar_{\pi}[d] & \P Y_{r,s}\\
    \mathcal M_{e}:=\sqrt[as\delta(e)]{L_{-\theta}/I_{g}Y}.
  }
\end{equation*}
Then the universal curve $\mathcal C_{e}$ over
$\sqrt[as\delta(e)]{L_{-\theta}/I_{g}Y}$ can be represented as a quotient
stack:
$$\mathcal C_{e}=[(AY^{ss}(\theta)^{g} \!\times\! \C^{*} \!\times\! U) / (G \!\times\! \C^{*}_{w}
\!\times\! T)]\ ,$$ where $T=\{(t_{1},t_{2})\in (\C^{*})^{2}|\quad
t_{1}^{as}=t_{2}^{ar}\}$. The right action is defined by:
$$(\vec{x},v,x,y)\cdot (g,w,(t_{1},t_{2})) =(g\cdot \vec{x},\theta(g)^{-1}vw^{-as\delta(e)},wt_{1}x,t_{2}y)\ ,$$
for all $(g,w,(t_{1},t_{2}))\in G \!\times\! \C^{*}_{w} \!\times\! T$ and
$(\vec{x},v,(x,y))\in AY^{ss}(\theta)^g \!\times\! \C^{*} \!\times\! U$. Then
$\mathcal C_{e}$ is a family of orbifold $\P_{ar,as}$ parameterized by $\mathcal
M_{e}$.

There are two standard characters of $T$
$$\chi_{1}:(t_{1},t_{2})\in T\mapsto
t_{1}\in \C^{*} \quad \chi_{2}:(t_{1},t_{2})\in T\mapsto t_{2}\in \C^{*} \ ,$$
we can lift them to be characters of $G \!\times\! \C^{*}_{w} \!\times\! T$ by
composing the projection map $\mathrm{pr}_{T}:G\!\times\! \C^{*}_{w} \!\times\!
T\rightarrow T$. By an abuse of notation, we continue to use $\chi_{1},\chi_{2}$
to denote the new characters. These two new characters defines two line
bundles$$M_{1}:=(AY^{ss}(\theta)^g \!\times\! \C^{*} \!\times\! U)\times _{G \!\times\! \C^{*}_{w} \!\times\! T} \;\C_{\chi_{1}}$$ and
$$M_{2}:=(AY^{ss}(\theta)^g \!\times\! \C^{*} \!\times\! U)\times _{G \!\times\! \C^{*}_{w}
\!\times\! T}\; \C_{\chi_{2}}$$ on $\mathcal C_{e}$ by the Borel construction, respectively. We
have the relation $M_{1}^{\otimes as}=M_{2}^{\otimes ar}$ on $\mathcal C_{e}$.
The universal map $f$ from $\mathcal C_{e}$ to $\P Y_{r,s}$ can be described as
follows: Let
$$\tilde{f}: AY^{ss}(\theta)^g \!\times\! \C^{*} \!\times\! U\rightarrow \C^{*}
\!\times\! AY^{ss}(\theta) \!\times\! U$$ be the morphism defined by:
\begin{equation}
  \begin{split}
    (\vec{x},v,x,y)&\in AY^{ss}(\theta)^g \!\times\! \C^{*} \!\times\! U \mapsto\\
    &(v, (x_{1},\cdots,x_{n}),x^{a\delta(e)},y^{a\delta(e)})\in \C^{*}
    \!\times\! AY^{ss}(\theta) \!\times\! U\ .
  \end{split}
\end{equation}
Then $\tilde{f}$ is equivariant with respect to the group homomorphism from $G
\!\times\! \C^{*}_{w} \!\times\! T $ to $G \!\times\! \C^{*}_{\alpha} \!\times\!
\C^{*}_{t}$ defined by:
\begin{equation}
  \begin{split}
    (g,w,(t_{1},t_{2}))&\in G \!\times\! \C^{*}_{w} \!\times\! T \mapsto\\
    &\big(g\big((t^{-s}_{1}t_{2}^{r})^{p_{1}},\cdots,(t^{-s}_{1}t_{2}^{r})^{p_{k}}\big),(wt_{1})^{a\delta(e)},t_{2}^{a\delta(e)}\big)\in
    G \!\times\! \C^{*}_{\alpha} \!\times\! \C^{*}_{t}\ ,
  \end{split}
\end{equation}
where the tuple $(p_{1},\cdots, p_{k})\in \mathbb N^{k}$ satisfies that
$g=(\mu_{a}^{p_{1}},\cdots,\mu_{a}^{p_{k}})\in G$. Note $\tilde{f}$ is
well-defined for $\chi^{-s}_{1}\chi_{2}^{r}$ is a torsion character of $T$ of
order $a$. The above construction gives the universal morphism $f$ from
$\mathcal C_{e}$ to $\P Y_{r,s}$ by descent.

We will define a (quasi left) $\C^{*}-$action on $\mathcal C_{e}$ such that the
map $f$ constructed above is $\C^{*}-$equivariant. Define a $\C^{*}$-action on $\mathcal C_{e}$ induced by the $\C^{*}-$action on $AY^{ss}(\theta)^g \!\times\! \C^{*} \!\times\! U$:
  $$m:\C^{*}\times AY^{ss}(\theta)^g \!\times\! \C^{*} \!\times\! U\rightarrow AY^{ss}(\theta)^g \!\times\! \C^{*} \!\times\! U\ ,$$
  $$t\cdot (\vec{x},v,(x,y))= (\vec{x},v,(x,t^{\frac{-1}{ar\delta(e)}}y))\ .$$
Note the morphism $\pi$ is also $\C^{*}$-equivariant, where $\mathcal M_{e}$ is
equipped with trivial $\C^{*}$-action. By the universal property of the projectivized bundle $\mathcal C_e$ over $\mathcal
M_{e}$, one has a tautological section
\begin{equation}\label{eq:tautosec3}
 (x, y) \in H^0\big((M_{1} \otimes \mathcal \pi^{*}\mathcal
R) \oplus (M_{2}\otimes \C_{\frac{-\lambda}{ar\delta(e)}})\big)\ ,
\end{equation}  
which is also a $\C^{*}-$invariant section.

Now we can check that $f$ is a $\C^{*}-$equivariant morphism from $\mathcal
C_{e}$ to $\P Y_{r,s}$ with respect to the $\C^{*}-$actions
for $\mathcal C_{e}$ and $\P Y_{r,s}$. According to Remark
\ref{rmk:equivmap3}, $f$ is equivalent to the following data:
\begin{enumerate}\label{equivedge3}
\item $k+2$ $\C^{*}$-equivariant line
  bundles on $\mathcal C_{e}$:
  $$\mathcal L_{j} := \pi^*L_{\pi_{j}} \otimes (M^{-\otimes s}_{1}\otimes M_{2}^{\otimes r})^{p_{j}},
  1\leq j\leq k $$ and
  $$\mathcal N_{1}:=(M_{1}\otimes \pi^{*}\mathcal R)^{\otimes a\delta(e)}\,\quad \mathcal N_{2}:=M_{2}^{a\delta(e)}\otimes \mathbb C_{\frac{-\lambda}{r}}\ .$$
  Where $L_{\pi_{j}}$ are the standard $\C^{*}$-equivariant line bundles on
  $\mathcal M_{e}$ by the Borel construction, $M_{1},M_{2}$ are the standard
  $\C^{*}$-equivariant line bundles on $C_{e}$ by the Borel construction.
 \item a universal section
    \begin{equation}
    \begin{split}
      (u,\vec{x},(\zeta_1, \zeta_2)) :=
      &(v, x_{1},\cdots,x_{n},(x^{a\delta(e)}, y^{a\delta(e)})) \\
      &\in \Gamma\big( \big((\mathcal N^{\vee}_{1})^{\otimes s}\otimes \mathcal
      L_{-\theta}\otimes \mathcal N_{2}^{\otimes r}\otimes
      \C_{\lambda}\big)\oplus \bigoplus_{1\leq i\leq n} \mathcal
      L_{\rho_{i}}\oplus \mathcal N_{1}\oplus \mathcal N_{2}\big)^{\C^{*}}\ .
    \end{split}
  \end{equation}
  Here one only need to check $v\in\Gamma((\mathcal N^{\vee}_{1})^{\otimes
    s}\otimes \mathcal L_{-\theta}\otimes \mathcal N_{2}^{\otimes r}\otimes
  \C_{\lambda})$, which is easy to be verified.
\end{enumerate}

Now we compute the localization contribution from $\mathcal M_{e}$. Based on the
perfect obstruction theory for stable maps in $\mathcal K_{0,\vec{m}}(\P
Y_{r,s},(\beta,\frac{\delta}{r}))$, the restriction of the perfect obstruction
theory to $\mathcal M_{e}$ decomposes into three parts: (1) the deformation
theory of source curve $\mathcal C_{e}$; (2) the deformation theory of the lines
bundles $(\mathcal L_{i})_{1\leq j\leq k}$ and $\mathcal N$; (3) the deformation
theory for the section
$$(u, \vec{x},(\zeta_{1},\zeta_{2}))\in \Gamma\big(\big(\mathcal N_{1}^{-\otimes
  s}\otimes \mathcal L_{-\theta }\otimes \mathcal N_{2}^{\otimes r}\otimes
\mathbb C_{\lambda}\big)\oplus\bigoplus_{1\leq i\leq n} \mathcal L_{\rho_{i}}
\oplus \mathcal N_{1} \oplus \mathcal N_{2}\big)\ .$$

The $\C^{*}-$fixed part of three parts above will contribute to the virtual
cycle of $\mathcal M_{e}$, we will show that $[\mathcal
M_{e}]^{\mathrm{vir}}=[\mathcal M_{e}]$. The virtual normal bundle comes from
the $\C^{*}-$moving part of the above three parts.

First every fiber curve $C_{e}$ in $\mathcal C_{e}$ over a geometrical point in
$\mathcal M_{e}$ is isomorphic to $\P_{ar,as} $, which is rational. There are no
infinitesimal deformations/obstructions for $C_{e}$, line bundles
$L_{j}:=\mathcal L_{j}|_{C_{e}}$, $N_{1}:=\mathcal N_{1}|_{C_{e}}$ and
$N_{2}:=\mathcal N_{2}|_{C_{e}}$. Hence their contribution to the perfect
obstruction theory comes from infinitesimal automorphisms. The infinitesimal
automorphisms of $C_{e}$ come from the space of vector fields on $C_{e}$ that vanish on
special points. Thus the $\C^{*}-$fixed part of infinitesimal automorphisms of $C_{e}$
comes from the $1-$dimensional subspace of vector fields on $C_{e}$ which vanish
on the two ramification points. The
movable part of infinitisimial automorphisms of $C_{e}$ is nonzero only if one
of ramification points on $C_{e}$ is not a special point. by Remark
\ref{rmk:edge3*}, the ramifications on $C_{e}$ are both nontrivial stacky points
when $r$ and $s$ are sufficiently large, hence they must be special points. So
there is no movable part for infinitesimal automorphisms of $C_{e}$.

Now let's turn to the localizations from sections. First the
infinitesimal deformations of sections $(u,\vec{x})$ are fixed, which, together
with fixed part of infinitesimal automorphisms of $C_{e}$ and line bundles
$L_{j}, \;N_{1},\; N_{2}$, as well as fixed parts of infinitesimal deformations of sections $(z_{1},z_{2}):=(\zeta_{1},\zeta_{2})|_{C_{e}}$,
contribute to the virtual cycle $[\mathcal M_{e}]^{\mathrm{vir}}$, which is
equal to the fundamental class of $\mathcal M_{e}$. The localization
contribution from the infinitesimal deformations of sections $(z_{1},z_{2})$ to
the virtual normal bundle is:
$$(R^{\bullet}\pi_{*}(\mathcal N_{1}
\oplus \mathcal N_{2}))^{\mathrm{\mathrm{mov}}}\ .$$

We first come to the deformations of $z_{2}$, we continue to use the
tautological section $(x,y)$ as in \eqref{eq:tautosec3}. For each fiber $C_{e}$,
sections of $N_{2}$ is spanned by monomials
$(x^{asm}y^{n})|_{C_{e}}$ with $arm+n=a\delta(e)$ and $m,n\in \Z_{\geq 0}$. Note
$x^{asm}y^{n}$ may not be a global section of $\mathcal N_{2}$ but always a global
section of $\mathcal R^{\otimes asm}\otimes \mathcal N_{2}\otimes
\C_{\frac{m}{\delta(e)}\lambda}$. Then $R^{\bullet}\pi_{*}\mathcal N_{2}$ will
decompose as a direct sum of line bundles, each corresponds to the monomial
$x^{asm}y^{n}$, whose first chern class is
$$c_{1}(\mathcal R^{\otimes -asm} \bigotimes \mathbb C_{\frac{-m}{\delta(e)}\lambda})=\frac{m}{\delta(e)}(D_{\theta}-\lambda)\ .$$
So the total contribution is equal to
$$\prod_{m=0}^{\lfloor\frac{\delta(e)}{r}\rfloor}\bigg(\frac{m}{\delta(e)}(D_{\theta}-\lambda)\bigg)\ .$$
The factor for $m=0$ appearing in the above product is the $\C^{*}-$fixed part
of $R^{\bullet}\pi_{*}\mathcal N_{2}$, it will contribute to the virtual cycle
of $\mathcal M_{e}$. The rest contributes to the virtual normal bundle as
$$\prod_{m=1}^{\lfloor\frac{\delta(e)}{r}\rfloor}\bigg(\frac{m}{\delta(e)}(D_{\theta}-\lambda)\bigg)\ .$$
Note when $r$ is sufficiently large, the above product becomes 1.

For the deformations of $z_{1}$, arguing in the same way as $z_{2}$, the Euler class of $R^{\bullet}\pi_{*}\mathcal N_{1}$ is equal to
$$\prod_{n=0}^{\lfloor {\frac{\delta(e)}{s}} \rfloor}\bigg(\frac{n}{\delta(e)}(-D_{\theta}+\lambda)\bigg)\ .$$ 
The factor for $m=0$ appearing in the above product is the $\C^{*}-$fixed part of $R^{\bullet}\pi_{*}\mathcal N_{1}$, it will contribute to the virtual cycle of $\mathcal M_{e}$. The Euler
class of virtual normal bundle of $\mathcal M_{e}$ comes from the movable part
of deformations of section $z_{1}$ is:
$$\prod_{n=1}^{\lfloor {\frac{\delta(e)}{s}} \rfloor}\bigg(\frac{n}{\delta(e)}(-D_{\theta}+\lambda)\bigg)\ .$$
Note when $s$ is sufficiently large, the above product becomes 1.


\subsubsection{Node contributions}\label{subsubsec:node-cntr3}
The deformations in $\mathcal K_{0,\vec{m}}(\P
Y_{r,s},(\beta,\frac{\delta}{r}))$ smoothing a node contribute to the Euler
class of the virtual normal bundle as the first Chern class of the tensor
product of the two cotangent line bundles at the branches of the node. For nodes
at which a component $C_e$ meets a component $C_v$ over the vertex $0$, this
contribution is
\begin{equation*}
  \frac{\lambda - D_{\theta}}{as\delta(e)} - \frac{\bar{\psi}_v}{as}.
\end{equation*}
 
For nodes at which a component $C_e$ meets a component $C_v$ at the vertex over
$\infty$, this contribution is
\begin{equation*}
  \frac{-\lambda + D_{\theta }}{ar\delta(e)} -\frac{\bar{ \psi}_v}{ar}.
\end{equation*}
The type of node at which two edge component $C_e$ meets with a vertex $v$ over
$0$ or $\infty$ will not occur using a similar argument in \cite[Lemma
6]{Janda2017}.


As for the node contributions from the normalization exact sequence, each node
$q$ (specified by a vertex $v$) contributes the Euler class of
\begin{equation}
  \label{node3}
  \big(R^0\pi_*\mathcal N_{1}|_{q}\big)^{\mathrm{mov}} \oplus \big(R^0\pi_* \mathcal N_{2}|_{q}\big)^{\mathrm{mov}}
\end{equation}
to the virtual normal bundle. In the case where $j(v) = 0$, $z_{2}|_{q}\equiv 1$
gives a trivialization of the fiber $\mathcal N_{2}|_{q}$, note that $(\mathcal N^{\vee}_{1})^{\otimes s}\otimes
        \mathcal L_{-\theta}\otimes \mathcal N_{2}^{\otimes r}\otimes
        \C_{\lambda}\cong \mathbb C$ we have $\mathcal
N_{2}|_{q}\cong \C$ and $ \mathcal N_{1}|_{q}\cong
L_{-\theta}^{\frac{1}{s}}\otimes \mathbb C_{\frac{\lambda}{s}}$, this implies
that $(R^0\pi_* \mathcal
N_{2}|_{q})^{\mathrm{mov}}=0$ and $R^{0}\pi_{*}\mathcal N_{1}|_{q}=0$. The later
vanishes because of the nontrivial stacky structure of the
line bundle $\mathcal N_{1}$ at $q$ when $s$ is sufficiently large. Hence there
no localization contribution from the normalization at the node $q$ over $0$. Similarly, for each
node $q$ incident to a vertex $v$ with $j(v) = \infty$, there is no localization
contribution
from the normalization at the node over $\infty$.

\subsection{Total localization contributions}\label{subsec:total3}
For each decorated graph $\Gamma$, denote $F_{\Gamma}$ to be the
fiber product
$$\prod_{v:j(v)=0}\mathcal M_{v}\times_{\bar{I}_{\mu}\sqrt[s]{L_{-\theta}/Y}} \prod
  _{e\in E}\mathcal M_{e}\times_{\bar{I}_{\mu}\sqrt[r]{L_{\theta}/Y}} \prod_{v:j(v)=\infty} \mathcal M_{v}$$ of the following
diagram:
\[\xymatrixcolsep{5pc}\xymatrix{
  F_{\Gamma}\ar[r]\ar[d] &\prod\limits_{v:j(v)=0}\mathcal M_{v}\times \prod\limits_{e\in E}\mathcal
  M_{e}\times \prod\limits_{v:j(v)=\infty} \mathcal
  M_{v}
  \ar[d]^-{ev_{h_{e}},ev_{p^{e}_{0}}, ev_{p^{e}_{\infty}}, ev_{h'_{e}}}\\
  \prod\limits_{E}\bar{I}_{\mu}\sqrt[s]{L_{-\theta}/Y} \times
  \bar{I}_{\mu}\sqrt[r]{L_{\theta}/Y}\ar[r]^-{(\Delta^{\frac{1}{s}}\times
    \Delta^{\frac{1}{r}})^{E}}
  &\prod\limits_{E}\bigg((\bar{I}_{\mu}\sqrt[s]{L_{-\theta}/Y})^{2}\times
  (\bar{I}_{\mu}\sqrt[r]{L_{\theta }/Y})^{2}\bigg)\ , }
\]
where $\Delta^{\frac{1}{s}}=(id,\iota)$(resp. $\Delta^{\frac{1}{r}}=(id,\iota)$)
is the diagonal map of $\bar{I}_{\mu}\sqrt[s]{L_{-\theta}/Y}$ (resp.
$\bar{I}_{\mu}\sqrt[r]{L_{\theta}/Y}$) into
$\bar{I}_{\mu}\sqrt[s]{L_{-\theta}/Y}\times
\bar{I}_{\mu}\sqrt[s]{L_{-\theta}/Y}$ (resp.
$\bar{I}_{\mu}\sqrt[r]{L_{\theta}/Y}\times \bar{I}_{\mu}\sqrt[r]{L_{\theta}/Y}$). Here when $v$ is a stable
vertex, the vertex moduli $\mathcal M_{v}$ is described in
\ref{subsubsec:ver-contr2}; when $v$ is an unstable vertex over
$0$, we treat $\mathcal M_{v}:=\bar{I}_{m(h)}\sqrt[s]{L_{-\theta}/Y}$ with the
identical virtual cycle, where $m(h)$ is the multiplicity of the half-edge
incident to $v$; when $v$ is an unstable vertex over $\infty$, we treat $\mathcal
M_{v}:=\bar{I}_{m(h)}\sqrt[r]{L_{\theta }/Y}$ with the identical virtual cycle,
where $m(h)$ is the multiplicity of the half-edge incident to $v$.

We define that $[F_{\Gamma}]^{\mathrm{vir}}$ to be:
\begin{equation*}
\begin{split}
\prod_{v:j(v)=0}[\mathcal M_{v}]^{\mathrm{vir}}\times_{\bar{I}_{\mu}\sqrt[s]{L_{-\theta}/Y}} \prod
  _{e\in E}[\mathcal M_{e}]^{\mathrm{vir}}\times_{\bar{I}_{\mu}\sqrt[r]{L_{\theta}/Y}} \prod_{v:j(v)=\infty} [\mathcal M_{v}]^{\mathrm{vir}}\ , 
\end{split}
\end{equation*}
where the fiber product over $\bar{I}_{\mu}\sqrt[s]{L_{-\theta}/Y}$ and
$\bar{I}_{\mu}\sqrt[r]{L_{\theta}/Y}$ imposes that the evaluation maps at the
two branches of each node (here we adopt the convention that a node can link a
unstable vertex and an edge.) agree. Then the contribution of decorated graph
$\Gamma$ to the virtual localization is
is: 
\begin{equation}\label{eq:auto-loc2}
Cont_{\Gamma}=\frac{\prod_{e\in
    E}sa_{e}}{\text{Aut}(\Gamma)}(\iota_{\Gamma})_{*}\left(\frac{[F_{\Gamma}]^{\mathrm{vir}}}{e^{\C^{*}}(N^{\mathrm{vir}}_{\Gamma})}\right)\ .
\end{equation}
Here $\iota_{F}:F_{\Gamma}\rightarrow \mathcal K_{0,\vec{m}}(\P
Y_{r,s},(\beta,\frac{\delta}{r}))$ is a finite etale map of degree
$\frac{Aut(\Gamma)}{\prod_{e\in E}sa_{e}}$ into the corresponding
$\C^{*}$-fixed loci in $\mathcal K_{0,\vec{m}}(\P
Y_{r,s},(\beta,\frac{\delta}{r}))$. The virtual normal bundle
$e^{\C^{*}}(N^{\mathrm{vir}}_{\Gamma})$ is the
product of virtual normal bundles from vertex contributions, edge contributions
and node contributions. 
 
\section{Recursion relations from auxiliary cycles}\label{sec:rec-rel}
For any $\beta\in \mathrm{Eff}(W,G,\theta)$, for simplicity, we
will denote
$$\mathcal K_{0,\vec{m}}(\bullet,\beta)
:=\bigsqcup_{\substack{d\in \mathrm{Eff}(\bullet)\\
    (i_{\bullet})_{*}(d)=\beta}}\mathcal K_{0,\vec{m}}(\bullet,d),
$$
where $\bullet$ can be $Y$,$\sqrt[r]{L_{\theta}/Y}$ and
$\sqrt[s]{L_{-\theta}/Y}$, and $i_{\bullet}$ is the natural structure map from $\bullet$
to $\mathfrak X$ which factors through the inclusion $i_{\mathfrak Y}:\mathfrak
Y\rightarrow \mathfrak X$.

\subsection{Auxiliary cycle I}\label{subsec:intr1}
Fix a nonzero $\beta\in \operatorname{Eff(W, G,\theta)}$, and a positive
rational number $\epsilon$ and the tuple
$\boldsymbol{\varepsilon}=(\epsilon,\cdots,\epsilon)\in (\mathbb
Q_{>0})^{p}$ such that $\epsilon\beta(L_{\theta })+p\epsilon\leq 1$. Set
$\delta=\beta(L_{\theta})+p$. For simplicity, we will denote
$$Q^{\widetilde{\theta}}_{0,\star}(\P \mathfrak Y^{\frac{1}{r},p},(\beta,1^{p},\frac{\delta}{r}))
:=\bigsqcup_{\substack{d\in \mathrm{Eff}(AY,G,\theta)\\
    (i_{\mathfrak Y})_{*}(d)=\beta}}Q^{\widetilde{\theta}}_{0,1}(\P \mathfrak
Y^{\frac{1}{r},p},(d,1^{p},\frac{\delta}{r}))\cap
ev^{-1}_{1}(\bar{I}_{(g_{\beta},\frac{\delta}{r})}\P Y^{\frac{1}{r}})\ . $$
Recall that $g_{\beta}\in G$ is defined in \S\ref{sec:I-fun}. We
will always assume that $r$ is a sufficiently large prime in this subsection.

For any nonnegative integer $c$, we will first consider the following auxiliary
cycle:
\begin{equation}\label{eq:mainintegral1}
  \frac{1}{p!}(\widetilde{EV_{\star}})_{*}\left(e^{\C^{*}}\big(R^{1}\pi_{*}f^{*}L_{\infty}^{\vee}\big)\cap [Q^{\widetilde{\theta}}_{0,\star}(\P \mathfrak Y^{\frac{1}{r},p},(\beta,1^{p},\frac{\delta}{r}))]^{\mathrm{vir}}\cap \bar{\psi}_{\star}^{c}\cap \prod_{j=1}^{p}\hat{ev}_{j}^{*}(\hat{t})\right).
\end{equation}
Here an explanation of the notations is in order:
\begin{enumerate}
\item the morphism $$\pi:\mathcal C\rightarrow
  Q^{\widetilde{\theta}}_{0,\star}(\P \mathfrak
  Y^{\frac{1}{r},p},(\beta,1^{p},\frac{\delta}{r}))$$ is the universal curve and the
  morphism $$f:\mathcal C\rightarrow \P \mathfrak Y^{\frac{1}{r},p}$$ is the
  universal map;
\item the $\C^{*}$-equivariant line bundle $L_{\infty}$ corresponds to the
  invertible sheaf $\mathcal O(\mathcal D_{\infty})$ on $\P \mathfrak
  Y^{\frac{1}{r},p}$ with the $\C^{*}$-linearization that $\C^{*}$ acts on the
  fiber over $\mathcal D_{\infty}$ (given by $z_{2}=0$) with weight
  $-\frac{\lambda}{r}$ and $\C^{*}$ acts on the fiber over $\mathcal D_{0}$
  (given by $z_{1}=0$) with weight $0$. For every fiber curve $C$ of the
  universal curve $\mathcal C$ via $\pi$, the important observation here is that
  the restricted line bundle $f^{*}L^{\vee}_{\infty}|_{C}$ has negative degree
  as $\beta\neq 0$ and has non-positive degree on every irreducible components
  of $C$. Indeed, if the image of an irreducible component of $C$ via $f$ isn't
  contained in $\mathcal D_{\infty}$, the degree is obviously non-positive. If
  the image of an irreducible component of $C$ under $f$ is contained in
  $\mathcal D_{\infty}$, then using the fact that $L^{\vee}_{\infty}$ is
  isomorphic to $(L_{\theta}^{\frac{1}{r}})^{\vee}$ over
  $$\mathcal D_{\infty}\cong \sqrt[r]{L_{\theta}/Y}$$
  and a similar discussion as in remark \ref{rmk:vanishH0}, the degree is also
  non-positive. Thus by Lemma \ref{lem:vanishH0}, we have
  $R^{0}\pi_{*}f^{*}L_{\infty}^{\vee}=0$, which implies that
  $R^{1}\pi_{*}f^{*}L_{\infty}^{\vee}$ is a vector bundle (of rank 0);
\item the morphism $EV_{\star}$ is a composition of the following maps:
  $$\xymatrix{ Q^{\widetilde{\theta}}_{0,\star}(\P \mathfrak Y^{\frac{1}{r},p},(\beta,1^{p},\frac{\delta}{r}))\ar[r]^-{ev_{\star}} &\bar{I}_{\mu}\P
    Y^{\frac{1}{r}} \ar[r]^{\mathrm{pr}_{r}} &\bar{I}_{\mu}Y\ ,}$$ where
  $\mathrm{pr}_{r}: \bar{I}_{\mu}\P Y^{\frac{1}{r}}\rightarrow \bar{I}_{\mu} Y$
  is the morphism induced from the natural structure map from $\P
  Y^{\frac{1}{r}}$ to $Y$ forgetting $z_{1},z_{2}$.
  $(\widetilde{EV_{\star}})_{*}$ is defined by
  $$\iota_{*}(r_{\star}(EV_{\star})_{*})$$
  as in \eqref{tilde-ev}. Note here $r_{\star}$ is the order of the band from
  the gerbe structure of $\bar{I}_{\mu}Y$ but not $\bar{I}_{\mu} \P
  Y^{\frac{1}{r}} $.
\item recall that $\hat{ev}_{j}$ is defined in \eqref{eq:hatev}. The cohomology class $\hat{t}\in H^{*}(\mathfrak Y,\mathbb Q)[t_{1},\cdots,t_{l}]$ is of the form
  $\sum_{i=1}^{l}t_{i}u_{i}(c_{1}(L_{\pi_j}))$. where $t_{i}$ are formal
  variables and $u_{i}$ are polynomials in the first chern class of the line
  bundles $L_{\pi_{j}}$ associated to the standard characters $\pi_j$ of $G$.  
\end{enumerate}

Apply virtual localization to $Q^{\widetilde{\theta}}_{0,\star}(\P \mathfrak
Y^{\frac{1}{r},p},(\beta,1^{p},\frac{\delta}{r}))$, we first prove the following
vanishing result, where the idea is borrowed from~\cite{localp1}.
\begin{lemma}\label{lem:vanish1}
  The localization graph $\Gamma$ that has more than one vertex labeled by $\infty$ will
  contribute zero to \eqref{eq:mainintegral1}.
\end{lemma}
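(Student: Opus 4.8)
The plan is to apply the virtual localization formula \eqref{localization} to the cycle \eqref{eq:mainintegral1} and to show that the insertion $e^{\C^{*}}(R^{1}\pi_{*}f^{*}L_{\infty}^{\vee})$ vanishes on the $\C^{*}$-fixed locus $F_{\Gamma}$ whenever the decorated graph $\Gamma$ has at least two vertices over $\infty$; this forces $Cont_{\Gamma}=0$.

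Over $F_{\Gamma}$ I would first analyze $R^{\bullet}\pi_{*}f^{*}L_{\infty}^{\vee}$ through the normalization sequence of the universal curve $\mathcal{C}$, cutting $\mathcal{C}$ into its vertex components $C_{v}$ and edge components $C_{e}$ along the graph nodes. Since $R^{0}\pi_{*}f^{*}L_{\infty}^{\vee}=0$ (established just before the statement), this yields an exact sequence
\[ 0\to \bigoplus_{i} R^{0}\pi_{i*}\big(f^{*}L_{\infty}^{\vee}|_{C_{i}}\big)\xrightarrow{\phi} \bigoplus_{q}f^{*}L_{\infty}^{\vee}|_{q}\to R^{1}\pi_{*}f^{*}L_{\infty}^{\vee}\to \bigoplus_{i}R^{1}\pi_{i*}\big(f^{*}L_{\infty}^{\vee}|_{C_{i}}\big)\to 0, \]
the middle sum running over the nodes $q$ at which $f^{*}L_{\infty}^{\vee}$ has trivial band. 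The local inputs are: on an edge $C_{e}$ the bundle $f^{*}L_{\infty}^{\vee}|_{C_{e}}$ has negative degree; on a vertex $C_{v}$ with $j(v)=\infty$ one has $f^{*}L_{\infty}^{\vee}|_{C_{v}}\cong (\mathcal{L}_{\theta}^{\frac{1}{r}})^{\vee}\otimes\C_{\frac{\lambda}{r}}$, of non-positive degree on every component because $L_{\theta}$ is semi-positive (Lemma \ref{lem:2.3}); hence by Lemma \ref{lem:vanishH0}, together with Remark \ref{rmk:vanishH0} in the degree-zero case (using that $r$ is a large prime so the band at the special points is non-trivial), all these $R^{0}$'s vanish. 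So the only contributions to $\bigoplus_{i}R^{0}\pi_{i*}$ come from the stable vertices $v$ with $j(v)=0$, where $f(C_{v})\subset\mathcal{D}_{0}$ gives $f^{*}L_{\infty}^{\vee}|_{C_{v}}\cong\mathcal{O}_{C_{v}}$ with $\C^{*}$-weight $0$ and hence $R^{0}\pi_{v*}\cong\C_{0}$; moreover every node $q$ in the middle sum is a ``$0$-type'' node (one with $z_{2}(q)\neq 0$), so its image lies in the $\C^{*}$-fixed section $\mathcal{D}_{0}$ and $f^{*}L_{\infty}^{\vee}|_{q}$ is the trivial line of $\C^{*}$-weight $0$.

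Therefore $\phi$ is an injective map of trivial weight-$0$ bundles on $F_{\Gamma}$ (its kernel is $R^{0}\pi_{*}f^{*}L_{\infty}^{\vee}=0$), and after trivializing it is the constant $\{0,\pm1\}$-incidence matrix between the stable vertices over $0$ and the $0$-type nodes, which has full column rank; hence $\operatorname{coker}\phi\cong\mathcal{O}_{F_{\Gamma}}^{\oplus N_{0}}$, with trivial $\C^{*}$-action, where $N_{0}=\#\{\text{$0$-type nodes of }\Gamma\}-\#\{\text{stable vertices of }\Gamma\text{ over }0\}$. As $R^{1}\pi_{*}f^{*}L_{\infty}^{\vee}$ is an extension of the vector bundle $\bigoplus_{i}R^{1}\pi_{i*}(f^{*}L_{\infty}^{\vee}|_{C_{i}})$ by $\operatorname{coker}\phi$, we get $e^{\C^{*}}(R^{1}\pi_{*}f^{*}L_{\infty}^{\vee})|_{F_{\Gamma}}=e^{\C^{*}}(\mathcal{O}^{\oplus N_{0}})\cdot e^{\C^{*}}\big(\bigoplus_{i}R^{1}\pi_{i*}(f^{*}L_{\infty}^{\vee}|_{C_{i}})\big)$, which is $0$ as soon as $N_{0}\geq 1$. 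A short combinatorial check finishes this step: every edge of $\Gamma$ joins a vertex over $0$ to one over $\infty$, and $\Gamma$ is a genus-zero tree, so if $\Gamma$ has two vertices over $\infty$ the path between them passes through a vertex over $0$ incident to at least two distinct edges; a stable such vertex with $k$ incident edges accounts for $k$ of the $0$-type nodes but subtracts only $1$, while an unstable valence-two vertex over $0$ accounts for one $0$-type node and subtracts nothing, so in either case $N_{0}\geq 1$. Hence $e^{\C^{*}}(R^{1}\pi_{*}f^{*}L_{\infty}^{\vee})|_{F_{\Gamma}}=0$.

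The point that requires real care, and is the main obstacle, is to see that this vanishing is not undone by the remaining factors of $Cont_{\Gamma}$: the classes $[F_{\Gamma}]^{\mathrm{vir}}$, $\bar{\psi}_{\star}^{c}$ and $\prod_{j}\hat{ev}_{j}^{*}(\hat{t})$ are regular, and in $1/e^{\C^{*}}(N_{\Gamma}^{\mathrm{vir}})$ the only denominators are equivariant Euler classes of the \emph{moving}, i.e.\ nonzero-weight, parts of the vertex/edge/node deformation–obstruction pieces computed in \S\ref{subsec:local-ana1}, which are invertible in the localized equivariant ring, while the $0$-type-node pieces of the relative obstruction theory contribute only the regular factors $\lambda-D_{\theta}$ (see \S\ref{subsubsec:node-cntr2}). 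Since a weight-zero Euler class such as $e^{\C^{*}}(\mathcal{O}^{\oplus N_{0}})$ cannot be cancelled in that ring, $Cont_{\Gamma}=0$, which proves the lemma.
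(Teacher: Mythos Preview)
Your proof is correct and follows essentially the same strategy as the paper: use the normalization sequence for $R^{\bullet}\pi_{*}f^{*}L_{\infty}^{\vee}$ to exhibit a weight-zero trivial subbundle inside $R^{1}\pi_{*}f^{*}L_{\infty}^{\vee}$ whenever two $\infty$-vertices force a $0$-vertex of valence $\geq 2$, so that the equivariant Euler class vanishes. The paper's argument is terser --- it just picks one such $0$-vertex $C_{0}$ meeting two edges at nodes $b_{1},b_{2}$ and observes that of the two weight-zero node pieces $H^{0}(b_{i},f^{*}L_{\infty}^{\vee})$ only one is killed by $H^{0}(C_{0},f^{*}L_{\infty}^{\vee})\cong\C_{0}$, the other surviving into $H^{1}$ --- whereas you carry out the full bookkeeping with the cokernel rank $N_{0}$ and also spell out why no denominator in $1/e^{\C^{*}}(N_{\Gamma}^{\mathrm{vir}})$ can cancel a weight-zero Euler class, a point the paper leaves implicit.
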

\begin{proof}
  Assume by contradiction, by the connectedness of the graph, there is at least
  one vertex at $0$ with valence at least two. Suppose $f:C\rightarrow \P
  \mathfrak Y^{\frac{1}{r},p}$ is $\C^{*}$-fixed. Assume that $C_{0}\cap C_{1}\cap
  C_{2}$ is part of curve $C$, where $C_{0}$ contracted by $f$ to $\mathcal
  D_{0}$ (given by $z_{1}=0$) and $C_{1},C_{2}$ are edges meeting with $C_{0}$
  at $b_{1}$ and $b_{2}$. Then in the normalization sequence for
  $R^{\bullet}\pi_{*}f^{*}(L_{\infty}^{\vee})$, it contains the part
  \begin{align*}\label{cd:vanishing}
    &H^{0}\left(C_{0}, f^{*}\left( L_{\infty}^{\vee}\right)\right) \oplus H^{0}\left(C_{1}, f^{*}\left(L_{\infty}^{\vee}\right)\right) \oplus H^{0}\left(C_{2}, f^{*}\left( L_{\infty}^{\vee}\right)\right)\\
    \rightarrow &H^{0}\left(b_{1}, f^{*}\left(
                  L_{\infty}^{\vee}\right)\right) \oplus H^{0}\left(b_{2},
                  f^{*}\left(L_{\infty}^{\vee}\right)\right)\\
    \rightarrow &H^{1}\left(C, f^{*}\left(L_{\infty}^{\vee}\right)\right).
  \end{align*}
  Hence there is one of the weight-0 pieces in $H^{0}\left(b_{1}, f^{*}\left(
      L_{\infty}^{\vee}\right)\right) \oplus H^{0}\left(b_{2},
    f^{*}\left(L_{\infty}^{\vee}\right)\right)$ that is canceled with a weight-0
  piece of $H^{0}\left(C_{0}, f^{*}\left( L_{\infty}^{\vee}\right)\right)$, and
  the other is mapped injectively into $H^{1}\left(C,
    f^{*}\left(L_{\infty}^{\vee}\right)\right)$. Thus $H^{1}\left(C,
    f^{*}\left(L_{\infty}^{\vee}\right)\right)$ contains a weight-0 piece with
  vanishing equivariant Euler class.
\end{proof}

Recall that we can write $\mathbb I(q,t,z)=\sum _{\beta,p}q^{\beta}\mathbb I_{\beta,p}$, where
$\mathbb I_{\beta,p}:=\frac{\boldsymbol{t}^{p}}{p!z^{p}}\mathbb I_{\beta}(z)$ is a Laurent polynomial in $z$ with coefficient in
degree $p$ multi-polynomial in $H^{*}(\bar{I}_{\mu}Y, \mathbb Q)[t_{0},\cdots,t_{l}]$. We will prove the following recursion relation by applying localization to \eqref{eq:mainintegral2}.
\begin{theorem}\label{thm:rec1}
  For any nonnegative integer $c$, $[z\mathbb I_{\beta,p}]_{z^{-c-1}}$ satisfies the
  following relation:
  \begin{equation}\label{eq:rec1}
    \begin{split}
      [z\mathbb I_{\beta,p}]_{z^{-c-1}}&=\bigg[\sum_{m=0}^{\infty}\sum_{\substack{\beta_{\star}+\beta_{1}+\cdots+\beta_{m}=\beta\\p_{1}+\cdots+p_{m}=p\\(\beta_{i},p_{i})\neq 0\, \text{for}\, 1\leq i\leq m}}\frac{1}{m!}(\widetilde{ev_{\star}})_{*}\bigg(\sum
      _{d=0}^{\infty}\epsilon_{*}\big(c_{d}(-R^{\bullet}\pi_{*}\mathcal
      L_{\theta}^{\frac{1}{r}})(\frac{\lambda}{r})^{-1+m-d}(-1)^{d}\\ &\cap [\mathcal
      K_{0,\vec{m}_{r}\cup
        \star}(\sqrt[r]{L_{\theta}/Y},\beta_{\star})]^{\mathrm{vir}}\big)\cap
      \prod_{i=1}^{m}\frac{ev_{i}^{*}(\frac{1}{\delta_{i}}(z \mathbb I_{\beta_{i},p_{i}}(z))|_{z=\frac{\lambda-D_{\theta}}{\delta_{i}}})}{\frac{\lambda-ev^{*}_{i}D_{\theta
          }}{r\delta_{i}}+\frac{\bar{\psi_{i}}}{r}}\cap
      \bar{\psi}_{\star}^{c}\bigg)\bigg]_{\lambda^{-1}}.
    \end{split}
  \end{equation}
  Here $\delta_{i}=\beta_{i}(L_{\theta})+p_{i}$ for $1\leq i\leq m$, $\epsilon:\mathcal K_{0,\vec{m}_{r}\cup
    \star}(\sqrt[r]{L_{\theta}/Y},\beta_{\star})\rightarrow \mathcal
  K_{0,\vec{m}\cup \star}(Y,\beta_{\star})$ is the natural structure morphism
  (c.f.\cite{tang16_quant_leray_hirsc_theor_banded_gerbes}), where
  $$\vec{m}_{r}\cup \star:=((g_{\beta_{i}}^{-1},\mu_{r}^{-\beta_{i}(L_{\theta})-p_{i}}):1\leq i\leq m)\times
    \{(g_{\beta},\mu_{r}^{\beta(L_{\theta})+p})\}\in
    (G\!\times\!\boldsymbol{\mu}_{r})^{m+1}\ ,$$
    and $\vec{m}\cup\star:=(g_{\beta_{i}}^{-1}:1\leq i\leq
    m)\!\times\!\{g_{\beta}\}\in G^{m+1}$.
\end{theorem}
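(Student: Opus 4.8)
The plan is to compute the auxiliary cycle \eqref{eq:mainintegral1} in two ways — once by inspection and once by virtual localization — and compare: the cycle is visibly polynomial in the equivariant parameter $\lambda$, so the $\lambda^{-1}$-coefficient of its localization expansion must vanish, and reading that coefficient off produces \eqref{eq:rec1}.

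\emph{Step 1: polynomiality.} Since $\bar I_{\mu}Y$ is proper and carries the trivial $\C^{*}$-action, $H^{*}_{\C^{*}}(\bar I_{\mu}Y,\mathbb Q)=H^{*}(\bar I_{\mu}Y,\mathbb Q)[\lambda]$, so it is enough to check that the class inside $(\widetilde{EV_{\star}})_{*}$ has no pole at $\lambda=0$. The classes $[Q^{\widetilde{\theta}}_{0,\star}(\P\mathfrak Y^{\frac1r,p},(\beta,1^{p},\tfrac{\delta}{r}))]^{\mathrm{vir}}$, $\bar\psi_{\star}^{c}$ and $\prod_{j}\hat{ev}_{j}^{*}(\hat t)$ are all $\lambda$-polynomial, and by the degree estimate recorded immediately after \eqref{eq:mainintegral1} together with Lemma \ref{lem:vanishH0} one has $R^{0}\pi_{*}f^{*}L_{\infty}^{\vee}=0$, so $R^{1}\pi_{*}f^{*}L_{\infty}^{\vee}$ is an honest (here rank-zero) vector bundle and its equivariant Euler class is $\lambda$-polynomial as well. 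As $\widetilde{EV_{\star}}$ is a proper pushforward, the whole auxiliary cycle is a polynomial in $\lambda$; in particular its $\lambda^{-1}$-coefficient vanishes.

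\emph{Step 2: localization and reduction to star-shaped graphs.} Apply \eqref{localization}. By Lemma \ref{lem:vanish1} only graphs with at most one vertex over $\infty$ survive, and since $ev_{\star}$ is supported on the $\infty$-twisted sector $\bar I_{(g_{\beta},\delta/r)}\P Y^{\frac1r}$ that vertex is present and carries the leg $\star$. Hence every contributing graph is a \emph{star}: a central vertex $v_{\star}$ over $\infty$ of degree $\beta_{\star}$ (possibly an unstable vertex, in which case $\beta_{\star}=0$) with $m\ge 0$ branches, each attached to $v_{\star}$ through a single edge; writing $(\beta_{i},p_{i})$ for the total $[\vec x]$-degree and number of weighted markings $y_{j}$ in branch $i$, one has $\beta_{\star}+\sum_{i}\beta_{i}=\beta$, $\sum_{i}p_{i}=p$, and $(\beta_{i},p_{i})\neq 0$. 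Moreover $\deg N|_{C_{v_{\star}}}=\beta_{\star}(L_{\theta})/r$ while the first edge of branch $i$ carries $N$-degree $\geq(\beta_{i}(L_{\theta})+p_{i})/r$ by Remark \ref{rmk:edge2}; since the total $N$-degree was pinned to $\delta/r=(\beta(L_{\theta})+p)/r$, all these inequalities are equalities, so that edge has $N$-degree exactly $\delta_{i}/r$ with $\delta_{i}=\beta_{i}(L_{\theta})+p_{i}$ and the ``excess'' products of Lemma \ref{lem:edgeloc} degenerate to $1$.

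\emph{Step 3: assembling the contributions.} The central vertex contributes the virtual cycle of $\mathcal K_{0,\vec m_{r}\cup\star}(\sqrt[r]{L_{\theta}/Y},\beta_{\star})$ with the virtual-normal factor $\sum_{d}c_{d}(-R^{\bullet}\pi_{*}\mathcal L_{\theta}^{1/r})(-\lambda/r)^{m-1-d}$ of \S\ref{subsubsec:ver-contr1}, combined with the restriction of $e^{\C^{*}}(R^{1}\pi_{*}f^{*}L_{\infty}^{\vee})$ along $C_{v_{\star}}$, which maps to $\mathcal D_{\infty}\cong\sqrt[r]{L_{\theta}/Y}$ where $L_{\infty}^{\vee}\cong\mathcal L_{-\theta}^{1/r}\otimes\C_{\lambda/r}$; for $r$ a sufficiently large prime the pushforward of this bundle has Chern classes matching those of $-R^{\bullet}\pi_{*}\mathcal L_{\theta}^{1/r}$, which turns the product of the two factors into the factor $\sum_{d}c_{d}(-R^{\bullet}\pi_{*}\mathcal L_{\theta}^{1/r})(\lambda/r)^{-1+m-d}(-1)^{d}$ in \eqref{eq:rec1}. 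Each branch contributes, by Lemma \ref{lem:edgeloc} in the degenerate form of Step 2 together with the $\hat t$-insertion rule of the Remark closing \S\ref{subsec:local-ana1}, the factor $\tfrac1{\delta_{i}}(z\mathbb I_{\beta_{i},p_{i}}(z))|_{z=(\lambda-D_{\theta})/\delta_{i}}$; here one sums over the distribution of the $p$ weighted markings among the branches — which converts $\prod_{j}\hat{ev}_{j}^{*}(\hat t)$ into the monomials $\boldsymbol t^{p_{i}}/p_{i}!$ inside $\mathbb I_{\beta_{i},p_{i}}$ and produces the symmetry factor $1/m!$ — and over the $0$-subtrees hanging off each branch, which are absorbed into $\mathbb I_{\beta_{i},p_{i}}$ because $\mathbb I$ is, by construction, the generating series of precisely these configurations as computed in \S\ref{subsubsec:edgebase}. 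The node joining branch $i$ to $v_{\star}$ contributes, by \S\ref{subsubsec:node-cntr2}, the denominator $\tfrac{\lambda-ev_{i}^{*}D_{\theta}}{r\delta_{i}}+\tfrac{\bar\psi_{i}}{r}$, and $\bar\psi_{\star}^{c}$, the morphism $\epsilon$, and $\widetilde{ev_{\star}}=\iota_{*}(r_{\star}(ev_{\star})_{*})$ pass through unchanged. Summing over all star-shaped $\Gamma$ therefore identifies the localization expansion of the auxiliary cycle with the bracketed sum of \eqref{eq:rec1}, inside which the distinguished term — one branch carrying all of $(\beta,p)$ and $v_{\star}$ unstable, so that $\bar\psi_{\star}$ restricts to the equivariant weight of a cotangent line on the rigid edge $\P_{ar,1}$ — has $\lambda^{-1}$-coefficient equal to $[z\mathbb I_{\beta,p}]_{z^{-c-1}}$; isolating this term and invoking the vanishing of the $\lambda^{-1}$-coefficient from Step 1 gives \eqref{eq:rec1}. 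The main obstacle is precisely this bookkeeping: verifying, with the correct signs, $r$-powers and gerbe orders, both the combined central-vertex/$e^{\C^{*}}(R^{1}\pi_{*}f^{*}L_{\infty}^{\vee})$ factor (where the ``$r$ a large prime'' symmetry of root-bundle pushforwards is used) and the identification of each branch's contribution with the explicit $\mathbb I_{\beta_{i},p_{i}}$ together with the marking combinatorics; the rest is formal.
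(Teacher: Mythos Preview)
Your overall strategy (localize the auxiliary cycle, use Lemma \ref{lem:vanish1} to force a star shape, then read off the $\lambda^{-1}$-coefficient) matches the paper, but two of your key computational steps are incorrect and hide the actual mechanism.

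First, your Step 3 asserts that each branch may carry a ``$0$-subtree'' which, after summing, is ``absorbed into $\mathbb I_{\beta_{i},p_{i}}$ because $\mathbb I$ is, by construction, the generating series of precisely these configurations''. That is not how $\mathbb I$ arises here. The paper's argument is that with the chosen $\epsilon$ (satisfying $\epsilon\beta(L_{\theta})+p\epsilon\le 1$) every vertex over $0$ has valence $1$ (no legs, only one incident edge by Lemma \ref{lem:vanish1}) and degree bounded by $\beta(L_{\theta})\le 1/\epsilon$, hence is \emph{unstable}. So each branch is literally a single edge with a basepoint at its $0$-end, and the factor $\tfrac{1}{\delta_{i}}(z\mathbb I_{\beta_{i},p_{i}})|_{z=(\lambda-D_{\theta})/\delta_{i}}$ comes directly from the single-edge computation of Lemma \ref{lem:edgeloc}. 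There is no summation over subtrees, and $\mathbb I$ is not a generating series of fixed-locus configurations in the sense you invoke.

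Second, your handling of $e^{\C^{*}}(R^{1}\pi_{*}f^{*}L_{\infty}^{\vee})$ at the central vertex is wrong. You claim it pairs with the virtual-normal factor $\sum_{d}c_{d}(-R^{\bullet}\pi_{*}\mathcal L_{\theta}^{1/r})(-\lambda/r)^{m-1-d}$ to produce the sign pattern $(\lambda/r)^{-1+m-d}(-1)^{d}$ via a ``large-prime symmetry''. In fact the paper observes that $R^{1}\pi_{*}f^{*}L_{\infty}^{\vee}$ has rank zero on every fixed locus $F_{\Gamma}$, so its Euler class is identically $1$; it plays no role beyond that. The passage from $(-\lambda/r)^{-1+m-d}$ with denominator $-\tfrac{\lambda-ev_{i}^{*}D_{\theta}}{r\delta_{i}}-\tfrac{\bar\psi_{i}}{r}$ to the form in \eqref{eq:rec1} is pure sign bookkeeping (flip the sign in each of the $m$ node factors and move the edge term to the other side), not a geometric cancellation.
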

\begin{proof}
  By Lemma \ref{lem:vanish1}, only decorated graph $\Gamma$, which has only one
  vertex $v$ labeled by $\infty$, may have nonzero localization contribution to the
  \eqref{eq:mainintegral1}. Note the only marking $q_{\star}$ can only be
  incident to the vertex labeled by $\infty$ due to the restriction on the
  multiplicity at $q_{\star}$. Furthermore, for such graph $\Gamma$, we claim
  there is no stable vertex labeled by $0$. Indeed, for any vertex $v$ over
  $0$, its degree $\beta(v)$ satisfies that $\beta(v)(L_{\theta})\leq
  \beta(L_{\theta})\leq \frac{1}{\epsilon}$, and it has valence 1 as no legs
  can attach to it and at most one edge is incident to it by Lemma
  \ref{lem:vanish1}, then the vertex $v$ must be unstable. So the
  decorated graph $\Gamma$ has only one vertex over $\infty$ with possible
  several edges (can be empty) attached, and each vertex labeled by $0$ corresponds to an edge in the graph
  $\Gamma$, which appears as an unmarked point (actually a base point as we will
  see). In the following, we analyze the localization contribution to
  \eqref{eq:mainintegral1} from the graph $\Gamma$ described above. We have two cases which
  depends on whether the only vertex labeled by $\infty$ on the graph $\Gamma$
  is stable or unstable.
  \begin{enumerate}\label{case:graph1}
  \item If the only vertex $v$ over $\infty$ is unstable, then it's a vertex
    with valence 2, i.e, it's incident to a leg and an edge. In this case
    the degree $(\beta,1^{p},\frac{\delta}{r})$ is concentrated on the single edge
    with the marked point $q_{\star}$ appearing as the ramification point over
    $\infty$ on the edge. Then it contributes
    \begin{equation*}\label{eq:edge-loc1}
      \frac{1}{\delta}(z\mathbb I_{\beta,p}(z))|_{z=\frac{\lambda-D_{\theta}}{\delta}}\cdot (\frac{\lambda-D_{\theta}}{\delta})^{c}
    \end{equation*}  
    to \eqref{eq:mainintegral1}. Here we use the fact that the restriction of
    $R^{1}\pi_{*}(f^{*}L^{\vee}_{\infty})$ to $F_{\Gamma}$ is a rank 0 vector
    bundle, so its equivariant Euler class is 1, and the restriction of
    $\bar{\psi}_{\star}$ to $\mathcal M_{e}$ is equal to
    $\frac{\lambda-D_{\theta}}{\delta}$.
  \item If the only vertex $v$ over $\infty$ is stable, then $v$ is incident to
    only one leg and possible several edges (can be none). We assume that the vertex
    $v$ has degree $(\beta_{*},\frac{\delta_{*}}{r})$ with
    $\delta_{*}=\beta_{*}(L_{\theta})$. If there is no edges in the graph
    $\Gamma$, which happens if and only if $\beta_{\star}=\beta$ and $p=0$, the corresponding graph
    has contribution
    \begin{equation}\label{eq:sub3}
      (\widetilde{ev_{\star}})_{*}\bigg(\sum
      _{d=0}^{\infty}\epsilon_{*}(c_{d}(-R^{\bullet}\pi_{*}\mathcal
      L_{\theta}^{\frac{1}{r}})(\frac{-\lambda}{r})^{-1-d} \cap [\mathcal K_{0,\star}(\sqrt[r]{L_{\theta}/Y},\beta_{\star})]^{\mathrm{vir}})\cap \bar{\psi}^{c}_{\star}\bigg).
    \end{equation}
    to the \eqref{eq:mainintegral1}. Otherwise we index all the edges attached to the vertex $v$ from $1$ to $m$ such that
    the edge $e_{i}$ corresponding to the index $i$ has degree
    $(\beta_{i},1^{J_{e_{i}}},\frac{\delta_{i}}{r})$. Since we assume that the total degree is
    $(\beta,1^{p},\frac{\delta}{r})=(\beta,1^{p},\frac{\beta(L_{\theta})}{r})$, and the
    degree on every edge satisfies the relation $\delta_{i}\geq
    \beta_{i}(L_{\theta})+p_{i}$ by Remark \ref{rmk:edge2}, where $p_{i}=|J_{e_{i}}|$. Therefore we must have
    $\delta_{i}=\beta_{i}(L_{\theta })+p_{i}$ for every edge $e_{i}$. It follows that
    all the edge has a base point and $(\beta_{i},p_{i})$ is nonzero.

    Equipped with these notations, by Remark \ref{rmk:edge1}, the vertex moduli $\mathcal M_{v}$ over
    $\infty$ is
    \begin{equation*}
      \begin{split}
        \mathcal K_{0,\vec{m}_{r}\cup
          \star}(\sqrt[r]{L_{\theta}/Y},\beta_{\star})&=\\
        \mathcal K_{0,[m]\cup
          \star}(\sqrt[r]{L_{\theta}/Y},\beta_{\star})\cap&\bigcap_{i=1}^{m}ev_{i}^{-1}(\bar{I}_{(g_{\beta_{i}}^{-1},\mu_{r}^{-\delta_{i}})}\sqrt[r]{L_{\theta}/Y})
        \cap
        ev_{\star}^{-1}(\bar{I}_{(g_{\beta},\mu_{r}^{\delta})}\sqrt[r]{L_{\theta}/Y}).
      \end{split}
    \end{equation*}
     
    Using the localization analysis in \S\ref{subsec:local-ana1} and
    the fact that $e^{\C^{*}}(R^{1}\pi_{*}f^{*}L_{\infty}^{\vee})=1$ as it's of rank zero on
    $F_{\Gamma}$. The localization contribution of such graph $\Gamma$ to
    \eqref{eq:mainintegral1} is equal to
    \begin{equation}\label{eq:sub1}
      \begin{split}
        \frac{1}{Aut(\Gamma)}\tilde{ev}_{*}\bigg(\sum
        _{d=0}^{\infty}\epsilon_{*}\big(c_{d}(-R^{\bullet}\pi_{*}\mathcal
        L_{\theta}^{\frac{1}{r}})(\frac{-\lambda}{r})^{-1+m-d}\cap [\mathcal K_{0,\vec{m}_{r}\cup\star}(\sqrt[r]{L_{\theta}/Y},\beta_{\star})]^{\mathrm{vir}}\big)\\
        \cap
        \prod_{i=1}^{m}\frac{ev_{i}^{*}(\frac{1}{\delta_{i}}(z \frac{t^{p_{i}}}{z^{p_{i}}}\mathbb I_{\beta_{i}}(q,z))|_{z=\frac{\lambda-D_{\theta}}{\delta_{i}}})}{-\frac{\lambda-ev^{*}_{i}D_{\theta
            }}{r\delta_{i}}-\frac{\bar{\psi_{i}}}{r}}\cap
        \bar{\psi}^{c}_{\star}\bigg) ,
      \end{split}
    \end{equation}
    where $t=\sum t_{i}u_{i}(c_{1}(L_{\tau_{ij}})+\beta(L_{\tau_{ij}})z)$ and $\epsilon:\mathcal K_{0,\vec{m}_{r}\cup
      \star}(\sqrt[r]{L_{\theta}/Y},\beta_{\star})\rightarrow \mathcal
    K_{0,\vec{m}\cup \star}(Y,\beta_{\star})$ is the natural structure map.
    Here $\vec{m}\cup \star=(g_{\beta_{i}}^{-1}:1\leq i\leq m)\times
    \{g_{\beta}\}\in G^{m+1}$. 
    
    Fix $\beta_{*}$ and $m$, the sum of \eqref{eq:sub1} coming from all possible
    graph which has $\infty-$vertex $v$ of degree $\beta_{\star}$ and $m$
    incident edges yields:
    \begin{equation}\label{eq:sub2}
      \begin{split}
        \sum_{\substack{\beta_{\star}+\beta_{1}+\cdots+\beta_{m}=\beta\\p_{1}+\cdots+p_{m}=p\\(\beta_{i},p_{i})\neq 0\, \text{for}\, 1\leq i\leq m}}\frac{1}{m!}&(\widetilde{ev_{\star}})_{*}\bigg(\sum
        _{d=0}^{\infty}\epsilon_{*}\big(c_{d}(-R^{\bullet}\pi_{*}\mathcal
        L_{\theta}^{\frac{1}{r}})(\frac{-\lambda}{r})^{-1+m-d}\\
        &\cap [\mathcal
        K_{0,\vec{m}_{r}\cup\star}(\sqrt[r]{L_{\theta}/Y},\beta_{\star})]^{\mathrm{vir}}\big)
        \cap
        \prod_{i=1}^{m}\frac{ev_{i}^{*}(\frac{1}{\delta_{i}}(z \mathbb I_{\beta_{i},p_{i}}(z))|_{z=\frac{\lambda-D_{\theta}}{\delta_{i}}})}{-\frac{\lambda-ev^{*}_{i}D_{\theta
            }}{r\delta_{i}}-\frac{\bar{\psi_{i}}}{r}}\cap
        \bar{\psi}_{\star}^{c}\bigg).
      \end{split}
    \end{equation}
  \end{enumerate}

  In summary, the auxiliary cycle \eqref{eq:mainintegral1} is equal to:
  \begin{equation}\label{eq:sum1}
    \begin{split}
      &\frac{1}{\delta}(z\mathbb I_{\beta,p}(z))|_{z=\frac{\lambda-D_{\theta}}{\delta}}\cdot (\frac{\lambda-D_{\theta}}{\delta})^{c}\\ 
      &+\sum_{m=0}^{\infty}\sum_{\substack{\beta_{\star}+\beta_{1}+\cdots+\beta_{m}=\beta\\p_{1}+\cdots+p_{m}=p\\(\beta_{i},p_{i})\neq 0\, \text{for}\, 1\leq i\leq m}}\frac{1}{m!}(\widetilde{ev_{\star}})_{*}\bigg(\sum
      _{d=0}^{\infty}\epsilon_{*}\big(c_{d}(-R^{\bullet}\pi_{*}\mathcal
      L_{\theta}^{\frac{1}{r}})(\frac{-\lambda}{r})^{-1+m-d}\\
            & \cap [\mathcal K_{0,\vec{m}_{r}\cup\star}(\sqrt[r]{L_{\theta}/Y},\beta_{\star})]^{\mathrm{vir}}\big)
      \cap
      \prod_{i=1}^{m}\frac{ev_{i}^{*}(\frac{1}{\delta_{i}}(z \mathbb I_{\beta_{i},p_{i}}(z))|_{z=\frac{\lambda-D_{\theta}}{\delta_{i}}})}{-\frac{\lambda-ev^{*}_{i}D_{\theta
          }}{r\delta_{i}}-\frac{\bar{\psi_{i}}}{r}}\cap
      \bar{\psi}_{\star}^{c}\bigg)\ .
          \end{split}
  \end{equation}

  Observe that \eqref{eq:mainintegral1} does not have negative $\lambda$ powers, then
  the $\lambda^{-1}$ coefficient in the equation \eqref{eq:sum1} is equal to zero.
  Note that the $\lambda^{-1}$ coefficient in \eqref{eq:sum1} is equal to
  \begin{equation}\label{eq:sum2}
    \begin{split}
      &[z\mathbb I_{\beta,p}(z)]_{z^{-c-1}}+\bigg[\sum_{m=0}^{\infty}\sum_{\substack{\beta_{\star}+\beta_{1}+\cdots+\beta_{m}=\beta\\p_{1}+\cdots+p_{m}=p\\(\beta_{i},p_{i})\neq 0\, \text{for}\, 1\leq i\leq m}}\frac{1}{m!}(\widetilde{ev_{\star}})_{*}\bigg(\sum
      _{d=0}^{\infty}\epsilon_{*}\big(c_{d}(-R^{\bullet}\pi_{*}\mathcal
      L_{\theta}^{\frac{1}{r}})(\frac{-\lambda}{r})^{-1+m-d}\\     
      &\cap [\mathcal
      K_{0,\vec{m}_{r}\cup\star}(\sqrt[r]{L_{\theta}/Y},\beta_{\star})]^{\mathrm{vir}}\big)\cap
      \prod_{i=1}^{m}\frac{ev_{i}^{*}(\frac{1}{\delta_{i}}(z \mathbb I_{\beta_{i},p_{i}}(z))|_{z=\frac{\lambda-D_{\theta}}{\delta_{i}}})}{-\frac{\lambda-ev^{*}_{i}D_{\theta
          }}{r\delta_{i}}-\frac{\bar{\psi_{i}}}{r}}\cap
      \bar{\psi}_{\star}^{c}\bigg)\bigg]_{\lambda^{-1}}\ .
    \end{split}
  \end{equation}
  Now \eqref{eq:sum2} immediately implies the formula \eqref{eq:rec1}. 
  
\end{proof}

\subsection{Auxiliary cycle II}\label{subsec:intr2}
In this section, for any $\beta\in \mathrm{Eff}(W,G,\theta)$, we will denote
$\mathcal K_{0,\vec{m}}(\P Y_{r,s},(\beta,\frac{\delta}{r}))$ to be
$$\bigsqcup_{\substack{d\in \mathrm{Eff}(AY,G,\theta)\\(i_{\mathfrak Y})_{*}(d)=\beta}}\mathcal
K_{0,\vec{m}}(\P Y_{r,s},(d,\frac{\delta}{r}))\ .$$

Fix $\beta,\delta$ as in \S\ref{subsec:intr1}. Assume that $r,s$ are
sufficiently large primes. We will also compare \eqref{eq:mainintegral1} to the
following auxiliary cycle:
\begin{equation}\label{eq:mainintegral2}
  \begin{split}
    \sum_{m=0}^{\infty}\sum_{\substack{\beta_{\star}+\beta_{1}+\cdots+\beta_{m}=\beta\\p_{1}+\cdots+p_{m}=p}}\frac{1}{m!}(\widetilde{EV_{\star}})_{*}\bigg([\mathcal
    K_{0,\vec{m}\cup\star}(\P Y_{r,s},(\beta_{\star},\frac{\delta}{r}))]^{\mathrm{vir}}\\
    \cap\prod_{i=1}^{m}ev_{i}^{*}\big(\mathrm{pr}_{r,s}^{*}(\mu_{\beta_{i},p_{i}}(-\bar{\psi}_{i}))\big)
    \cap \bar{\psi}_{\star}^{c}\cap e^{\C^{*}}(R^{1}\pi_{*}f^{*}
    L_{\infty}^{\vee})\bigg).
  \end{split}
\end{equation}
Here an explanation of the notations is in order:
\begin{enumerate}
\item the morphism $$\pi:\mathcal C\rightarrow \mathcal K_{0,\vec{m}\cup
    \star}(\P Y_{r,s},(\beta_{\star},\frac{\delta}{r}))$$ is the universal curve
  and the morphism
 $$f:\mathcal C\rightarrow \P Y_{r,s}$$ is the universal map; 
\item for any nonnegative integer $m$ and degrees
  $\beta_{\star},\beta_{1},\cdots, \beta_{m}$ in $\mathrm{Eff}(W,G,\theta)$.
  Here $\vec{m}=(m_{i}\in G \!\times\! \boldsymbol{\mu}_{s} \!\times\!
  \boldsymbol{\mu}_{r}:1\leq i \leq m)$, in which
  $m_{i}=(g^{-1}_{\beta_{i}},\mu_{s}^{\beta_{i}(L_{\theta })+p_{i}},1)$, and $m_{\star}=(g_{\beta},1,\mu_{r}^{\beta(L_{\theta })+p})\in
  G\times \boldsymbol{\mu}_{s}\times \boldsymbol{\mu}_{r}$. So $\mathcal
  K_{0,\vec{m}\cup\star}(\P Y_{r,s},(\beta_{\star},\frac{\delta}{r}))$ is
  defined to be:
  \begin{equation*}
    \mathcal
    K_{0,m+1}(\P Y_{r,s},(\beta_{\star},\frac{\delta}{r}))\cap \bigcap_{i=1}^{m}ev^{-1}_{i}(\bar{I}_{m_{i}}\P Y_{r,s})\cap ev^{-1}_{m+1}(\bar{I}_{m_{\star}}\P Y_{r,s} );
  \end{equation*}
\item the line bundle $L_{\infty}$ corresponds to the invertible sheaf $\mathcal
  O(\mathcal D_{\infty})$ with the $\C^{*}$-linearization such that $\C^{*}$
  acts on the fiber over $\mathcal D_{\infty}$ with weight $-\frac{\lambda}{r}$
  and acts on the fiber over $\mathcal D_{0}$ with weight zero; Using the same
  reasoning in the case of auxiliary cycle I, we have
  $R^{0}\pi_{*}f^{*}L_{\infty}^{\vee}=0$ and
  $R^{1}\pi_{*}f^{*}L_{\infty}^{\vee}$ is a vector bundle (of rank $0$);
\item the morphism $EV_{\star}$ is a composition of the following maps:
  $$\xymatrix{ \mathcal K_{0,\vec{m}\cup \star}(\P
    Y_{r,s},(\beta_{\star},\frac{\delta}{r}))\ar[r]^-{ev_{\star}}
    &\bar{I}_{\mu}\P Y_{r,s} \ar[r]^{\mathrm{pr}_{r,s}} &\bar{I}_{\mu}Y\
    ,}$$where $\mathrm{pr}_{r,s}: \bar{I}_{\mu}\P Y_{r,s}\rightarrow
  \bar{I}_{\mu} Y$ is the morphism induced from the natural structure map from
  $\P Y_{r,s}$ to $Y$ forgetting $u$ and $z_{1},z_{2}$, and
  $(\widetilde{EV_{\star}})_{*}$ is defined by
  $$\iota_{*}(r_{\star}(EV_{\star})_{*})$$
  as in \ref{tilde-ev}. Note here $r_{\star}$ is the order of the band from the
  gerbe structure of $\bar{I}_{\mu}Y$ but not $\bar{I}_{\mu}\P Y_{r,s}$.
\end{enumerate}
First we have a similar vanishing result as Lemma \ref{lem:vanish1} by an
analogous argument.
\begin{lemma}\label{lem:vanish2}
  The localization graph $\Gamma$ which has more than one vertex labeled by $\infty$
  contributes zero to \eqref{eq:mainintegral2}.
\end{lemma}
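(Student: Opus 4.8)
The plan is to mirror, essentially verbatim, the argument proving Lemma~\ref{lem:vanish1}, using that the localization factor $e^{\C^{*}}(R^{1}\pi_{*}f^{*}L_{\infty}^{\vee})$ enters \eqref{eq:mainintegral2} exactly as it enters \eqref{eq:mainintegral1}. First I would record the combinatorics of the dual graph of a $\C^{*}$-fixed stable map to $\P Y_{r,s}$: it is bipartite. Indeed every edge component $C_{e}\cong\P^{1}_{ar,as}$ carries its ramification point $q_{0}$ (where $z_{1}=0$) on a vertex over $0$ and $q_{\infty}$ (where $z_{2}=0$) on a vertex over $\infty$, and by the node analysis of \S\ref{subsubsec:node-cntr3} two edge components never meet directly at a node. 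Consequently, if $\Gamma$ has at least two vertices labelled $\infty$, connectedness forces the existence of a \emph{stable} vertex $v_{0}$ over $0$ with $\mathrm{val}(v_{0})\ge 2$; here one uses that an unstable vertex over $0$ carries at most one incident edge once two-edge nodes are excluded, so a $0$-vertex of valence $\ge2$ is necessarily stable and hence $C_{0}:=C_{v_{0}}$ is an honest connected genus-zero twisted curve meeting at least two edge components $C_{1},C_{2}$ at nodes $b_{1},b_{2}$.

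Next I would run the normalization exact sequence for $R^{\bullet}\pi_{*}f^{*}L_{\infty}^{\vee}$ along the subcurve $C_{0}\cup C_{1}\cup C_{2}$ and isolate the piece
\begin{align*}
H^{0}(C_{0},f^{*}L_{\infty}^{\vee})\oplus H^{0}(C_{1},f^{*}L_{\infty}^{\vee})\oplus H^{0}(C_{2},f^{*}L_{\infty}^{\vee})
&\to H^{0}(b_{1},f^{*}L_{\infty}^{\vee})\oplus H^{0}(b_{2},f^{*}L_{\infty}^{\vee})\\
&\to H^{1}(C,f^{*}L_{\infty}^{\vee}).
\end{align*}
Since $C_{0}$ maps into $\mathcal D_{0}\cong\sqrt[s]{L_{-\theta}/Y}$, on which the invertible sheaf $\mathcal O(\mathcal D_{\infty})$ is trivial and carries $\C^{*}$-weight $0$ (the linearization prescribed in item (3) of the list following \eqref{eq:mainintegral2}), the space $H^{0}(C_{0},f^{*}L_{\infty}^{\vee})$ and both evaluation spaces $H^{0}(b_{i},f^{*}L_{\infty}^{\vee})$ are one-dimensional of weight $0$. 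On the other hand each edge component maps with strictly positive contact order to $\mathcal D_{\infty}$, so $f^{*}L_{\infty}^{\vee}=f^{*}\mathcal O(-\mathcal D_{\infty})$ has negative degree on $C_{1}$ and $C_{2}$, whence $H^{0}(C_{1},f^{*}L_{\infty}^{\vee})=H^{0}(C_{2},f^{*}L_{\infty}^{\vee})=0$ by Lemma~\ref{lem:vanishH0}. Exactly as in the proof of Lemma~\ref{lem:vanish1}, one weight-$0$ line in $H^{0}(b_{1},f^{*}L_{\infty}^{\vee})\oplus H^{0}(b_{2},f^{*}L_{\infty}^{\vee})$ is cancelled against $H^{0}(C_{0},f^{*}L_{\infty}^{\vee})$ while the remaining weight-$0$ line injects into $H^{1}(C,f^{*}L_{\infty}^{\vee})$. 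Thus $R^{1}\pi_{*}f^{*}L_{\infty}^{\vee}$ acquires a trivial ($\C^{*}$-fixed) summand along $F_{\Gamma}$, so $e^{\C^{*}}(R^{1}\pi_{*}f^{*}L_{\infty}^{\vee})|_{F_{\Gamma}}=0$ and the contribution of $\Gamma$ to \eqref{eq:mainintegral2} vanishes.

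The only genuinely new bookkeeping compared with Lemma~\ref{lem:vanish1} — and the step I would be most careful about — is the bipartiteness of the dual graph of $\P Y_{r,s}$ and the precise location of the weight-$0$ loci, namely confirming that $L_{\infty}^{\vee}$ is trivial with $\C^{*}$-weight $0$ on $\mathcal D_{0}$ and of negative degree on each edge component, and that a valence $\ge 2$ vertex over $0$ cannot be unstable. Once these are in place, the argument is a word-for-word repetition of the earlier one and no further computation is needed.
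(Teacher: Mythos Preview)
Your proposal is correct and follows exactly the approach the paper intends: the paper states only that the result holds ``by an analogous argument'' to Lemma~\ref{lem:vanish1}, and you have carefully spelled out that analogous argument, correctly adapting it to the geometry of $\P Y_{r,s}$ (in particular invoking the exclusion of two-edge nodes from \S\ref{subsubsec:node-cntr3} to ensure the relevant $0$-vertex carries an actual curve component). One small imprecision: what connectedness and bipartiteness actually force is a $0$-vertex with at least two incident \emph{edges}, not merely $\mathrm{val}(v_{0})\ge 2$ (an unstable marked point on an edge has valence $2$ but only one incident edge); since you immediately conclude ``meeting at least two edge components $C_{1},C_{2}$'' this does not affect the argument, but you should phrase the intermediate claim accordingly.
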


We will prove the following recursion relation by applying localization to \eqref{eq:mainintegral2}.
\begin{theorem}\label{thm:rec2}
  Assume $r,s$ are sufficiently large and prime. For any nonnegative integer
  $c$, the summation
  $$\sum_{m=0}^{\infty}\sum_{\substack{\beta_{\star}+\beta_{1}+\cdots+\beta_{m}=\beta\\p_1+\cdots+p_m=p}}\frac{1}{m!}\phi^{\alpha}\langle\mu_{\beta_{1},p_{1}}(-\bar{\psi}_{1}),\cdots,\mu_{\beta_{m,p_{m}}}(-\bar{\psi}_{m}),\phi_{\alpha}\bar{\psi}^{c}_{\star}
  \rangle_{0,[m]\cup \star,\beta_{\star}}$$ satisfies the following
  relation:
  \begin{equation}\label{eq:rec2}
    \begin{split}
      &\sum_{m=0}^{\infty}\sum_{\substack{\beta_{\star}+\beta_{1}+\cdots+\beta_{m}=\beta\\p_1+\cdots+p_m=p}}\frac{1}{m!}\phi^{\alpha}\langle{\mu_{\beta_{1}}(-\bar{\psi}_{1}),\cdots,\mu_{\beta_{m}}(-\bar{\psi}_{m}),\phi_{\alpha}\bar{\psi}^{c}_{\star}}
      \rangle_{0,[m]\cup \star,\beta_{\star}}\\&=
\bigg[\sum_{m=0}^{\infty}\sum_{\substack{\beta_{\star}+\beta_{1}+\cdots+\beta_{m}=\beta\\p_1+\cdots+p_m=p\\(\beta_{i},p_{i})\neq 0\, \text{for}\, 1\leq i\leq m}}\frac{1}{m!}(\tilde{ev_{\star}})_{*}\bigg(\sum
      _{d=0}^{\infty}\epsilon_{*}\big(c_{d}(-R^{\bullet}\pi_{*}\mathcal
      L_{\theta}^{\frac{1}{r}})(\frac{\lambda}{r})^{-1+m-d}(-1)^{d}\\
      &\cap [\mathcal K_{0,\vec{m}_{r}\cup
        \star}(\sqrt[r]{L_{\theta}/Y},\beta_{\star})]^{\mathrm{vir}}\big) \cap
      \prod_{i=1}^{m}\frac{ev_{i}^{*}(\frac{1}{\delta_{i}}f_{\beta_{i},p_{i}}(z)|_{z=\frac{\lambda-D_{\theta}}{\delta_{i}}})}{\frac{\lambda-ev^{*}_{i}D_{\theta
          }}{r\delta_{i}}+\frac{\bar{\psi_{i}}}{r}}\cap
      \bar{\psi}_{\star}^{c}\bigg)\bigg]_{\lambda^{-1}}.
    \end{split}
  \end{equation}
  Here $f_{\beta_{i},p_{i}}(z)$ is defined as follows:
  \begin{equation}\label{eq:fbeta}
    \begin{split}
      f_{\beta_{i},p_{i}}(z)&:=\mu_{\beta_{i},p_{i}}(z)+
      \sum_{l=0}^{\infty}\sum_{\substack{\beta_{0}+\beta_{1}+\cdots+\beta_{l}=\beta_{i}\\p_{1}+\cdots+p_{l}=p_{i}}}
      \frac{1}{l!}\\&(\widetilde{ev_{0}})_{*}\bigg( [\mathcal
      K_{0,[l]\cup\{0\}}(Y,\beta_{0})]^{\mathrm{vir}}\cap
      \bigcap_{j=1}^{l}ev_{j}^{*}(\mu_{\beta_{j},p_{j}}(-\bar{\psi}_{j}))\cap
      \frac{1}{z-\bar{\psi}_{0}}\bigg)\ ,
    \end{split}
  \end{equation}
  $\delta_{i}=\beta_{i}(L_{\theta})+p_{i}$,
  $\vec{m}_{r}\cup\star:=((g^{-1}_{\beta_{i}},\mu_{r}^{-\beta_{i}(L_{\theta})-p_{i}}):1\leq
  i\leq m)\cup \{g_{\beta},\mu_{r}^{\beta(L_{\theta})+p}\}$, $\vec{m}_{r}\cup\star:=(g^{-1}_{\beta_{i}}:1\leq
  i\leq m)\cup \{g_{\beta}\}$ and $\epsilon:\mathcal K_{0,\vec{m}_{r}\cup
    \star}(\sqrt[r]{L_{\theta}/Y},\beta_{\star})\rightarrow \mathcal
  K_{0,\vec{m}\cup \star}(Y,\beta_{\star})$ is the natural structure morphism.
\end{theorem}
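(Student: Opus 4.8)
The plan is to prove Theorem \ref{thm:rec2} in close parallel to the proof of Theorem \ref{thm:rec1}, using the master space $\mathcal K_{0,\vec{m}\cup \star}(\P Y_{r,s},(\beta_{\star},\frac{\delta}{r}))$ of \S\ref{sec:masterspace-inf} in place of the quasimap graph space, and extracting the $\lambda^{-1}$-coefficient of the polynomial (in $\lambda$) auxiliary cycle \eqref{eq:mainintegral2}. First I would apply the virtual localization formula to \eqref{eq:mainintegral2}, using the $\C^{*}$-fixed loci indexed by decorated graphs as in \S\ref{subsec:intr2}, and invoke Lemma \ref{lem:vanish2} to restrict attention to graphs with exactly one vertex labeled $\infty$. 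As in the proof of Theorem \ref{thm:rec1}, the insertion $e^{\C^{*}}(R^{1}\pi_{*}f^{*}L_{\infty}^{\vee})$ has rank $0$ restricted to any surviving fixed locus, so its equivariant Euler class is $1$; and because $L_{\theta}$ is anti-ample-ish on the $0$-side root stack (Remark \ref{rmk:vanishH0}), the only vertices over $0$ that can appear with positive valence are those supporting genuine stable maps to $\sqrt[s]{L_{-\theta}/Y}$ — these are the pieces that will assemble into the $J$-function-type series $f_{\beta_{i},p_{i}}(z)$.

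The key structural point, and the only real difference from Theorem \ref{thm:rec1}, is the identification of the edge plus $0$-vertex contributions. Each edge $e_{i}$ contributes (via the edge moduli $\mathcal M_{e_{i}}=\sqrt[as\delta(e)]{L_{-\theta}/I_{g}Y}$ and the node-smoothing factor $\frac{\lambda-D_{\theta}}{as\delta(e)}-\frac{\bar\psi_{v}}{as}$ over $0$ and $\frac{-\lambda+D_{\theta}}{ar\delta(e)}-\frac{\bar\psi_{v}}{ar}$ over $\infty$) a factor that, after substituting $z=\frac{\lambda-D_{\theta}}{\delta_{i}}$, reproduces $\mu_{\beta_{i},p_{i}}(z)$; and a $0$-vertex attached to such an edge contributes a sum over stable maps to $Y$ (pushed forward via $\widetilde{ev_{0}}$ along the $0$-branch node with a $\frac{1}{z-\bar\psi_{0}}$ propagator coming from the geometric series expansion of the node-smoothing class when $s$ is large). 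Summing over all ways to distribute degree and $p$-markings among such $0$-vertices is exactly the recursion that defines $f_{\beta_{i},p_{i}}(z)$ in \eqref{eq:fbeta}; this is a standard "reconstruction of the $J$-function from its tails" manipulation, and it is where I would be most careful. Meanwhile the unique $\infty$-vertex of degree $\beta_{\star}$ contributes the twisted-root-gerbe virtual class $[\mathcal K_{0,\vec{m}_{r}\cup\star}(\sqrt[r]{L_{\theta}/Y},\beta_{\star})]^{\mathrm{vir}}$ together with the Janda-type expansion $\sum_{d}c_{d}(-R^{\bullet}\pi_{*}\mathcal L_{\theta}^{\frac{1}{r}})(\frac{-\lambda}{r})^{-1+m-d}$ of the inverse Euler class (\S\ref{subsubsec:ver-contr2}), and the node factors glue the $\bar\psi_{i}$ on the $\infty$-side into the denominators $\frac{\lambda-ev^{*}_{i}D_{\theta}}{r\delta_{i}}+\frac{\bar\psi_{i}}{r}$.

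Assembling these pieces, \eqref{eq:mainintegral2} becomes a sum of two groups of graph contributions: those whose unique $\infty$-vertex is \emph{unstable} (valence $2$: one leg, one edge, all degree on the edge), which by the edge analysis of \S\ref{subsubsec:edgebase} together with the definition of $f_{\beta_{i},p_{i}}$ give precisely the left-hand side $\sum_{m,\beta_{\star}}\frac{1}{m!}\phi^{\alpha}\langle\mu_{\beta_1}(-\bar\psi_1),\dots,\mu_{\beta_m}(-\bar\psi_m),\phi_{\alpha}\bar\psi_{\star}^{c}\rangle_{0,[m]\cup\star,\beta_{\star}}$ of \eqref{eq:rec2} — here one uses that a stable vertex over $\infty$ is absent and the whole $\infty$-contribution reduces to a point insertion, and that the edge-plus-$0$-vertex clusters reassemble into the genus-zero Gromov-Witten bracket with insertions $\mu_{\beta_i}(-\bar\psi_i)$; and those whose unique $\infty$-vertex is \emph{stable}, which give exactly the right-hand side of \eqref{eq:rec2}. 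Finally, since \eqref{eq:mainintegral2} is a polynomial in $\lambda$ (no negative powers), its $\lambda^{-1}$-coefficient vanishes, which rearranges into the claimed identity \eqref{eq:rec2}. The main obstacle I anticipate is the bookkeeping in the third step — correctly matching the combinatorial factors $\frac{1}{m!}$, $\frac{1}{l!}$, the automorphism factors $\operatorname{Aut}(\Gamma)$, and the band-order twists $r_{\star}$ in $(\widetilde{EV_{\star}})_{*}$ versus $(\widetilde{ev_{0}})_{*}$ — so that the two groups of graphs land on the nose as the two sides of \eqref{eq:rec2}; the geometric inputs (vanishing lemmas, edge computations, Janda expansion) are all already established in the preceding sections and can be cited directly.
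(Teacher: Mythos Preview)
Your plan is essentially the paper's own argument: localize the auxiliary cycle \eqref{eq:mainintegral2}, use Lemma \ref{lem:vanish2} to reduce to graphs with a unique $\infty$-vertex, split into the unstable and stable $\infty$-vertex cases, and extract the $\lambda^{-1}$-coefficient. The identification of the two cases with the two sides of \eqref{eq:rec2} is correct.

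Two places where your sketch is imprecise deserve mention. First, the reference to \S\ref{subsubsec:edgebase} is misplaced: that section treats basepoint edges in Master space I, whereas in $\mathcal K_{0,\vec m\cup\star}(\P Y_{r,s},\cdot)$ the edges carry no basepoints and have essentially trivial contribution when $r,s$ are large (see \S\ref{sec:masterspace-inf}). The $\mu_{\beta_i,p_i}(z)$ piece of $f_{\beta_i,p_i}$ does not come from the edge itself but from the case where the $0$-vertex incident to the edge is an \emph{unstable} marked point, so that the insertion $\mu_{\beta_i,p_i}(-\bar\psi_i)$ specializes at $\bar\psi_i=-\frac{\lambda-D_\theta}{\delta_i}$. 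Second, and more substantively, the step where a stable $0$-vertex ``contributes a sum over stable maps to $Y$'' requires justification: a priori the $0$-vertex moduli is $\mathcal K_{0,\vec m_s\cup\{0\}}(\sqrt[s]{L_{-\theta}/Y},\beta_0)$ capped with $\sum_d c_d(-R^{\bullet}\pi_*\mathcal L_{-\theta}^{1/s})(\lambda/s)^{-d}$, and one must show this collapses to $\frac{1}{s}[\mathcal K_{0,[m]\cup\{0\}}(Y,\beta_0)]^{\mathrm{vir}}$. The paper isolates this as Lemma \ref{lem:simp-fbeta}, whose proof uses both the vanishing $R^0\pi_*\mathcal L_{-\theta}^{1/s}=0$ (including the $\beta_0=0$ case, which needs the nontrivial monodromy at the marked points and is not covered by Remark \ref{rmk:vanishH0} alone) and the pushforward identity $\epsilon'_*[\mathcal K_{0,\vec m_s\cup\{0\}}(\sqrt[s]{L_{-\theta}/Y},\beta_0)]^{\mathrm{vir}}=\frac{1}{s}[\mathcal K_{0,\vec m\cup\{0\}}(Y,\beta_0)]^{\mathrm{vir}}$ from \cite{tang16_quant_leray_hirsc_theor_banded_gerbes}. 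This is precisely the ``bookkeeping'' you flag as the main obstacle, and it is the one genuinely new ingredient beyond the proof of Theorem \ref{thm:rec1}.
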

\begin{proof}
  By Lemma \ref{lem:vanish2}, only decorated graph $\Gamma$ that has only one
  vertex $v$ labeled by $\infty$ may have nonzero localization contribution to the
  \eqref{eq:mainintegral2}. Let's denote by $\beta_{\star}$ the degree of the
  unique vertex $v$ labeled by $\infty$ coming from $\vec{x}$. Note the marked point $q_{\star}$
  must lie on a vertex labeled by $\infty$ due to the choice of multiplicity at
  the marking $q_{\star}$. Thus the vertex $v$ can't be a node linking
  two edges. Note one can assume all the other legs appear in vertexes labeled
  by $0$ due to the restriction of multiplicity on the other legs and the fact $\mu_{0}=0$. Hence there are only two types of graph $\Gamma$ depending on
  whether $v$ is a stable or unstable vertex.
  \begin{enumerate}\label{case:graph2}
  \item If the only vertex $v$ over $\infty$ in $\Gamma$ is unstable, in the case,
    $v$ is of valence $2$, i.e. it's incident to an edge and an leg
    corresponding to the marking
    $q_{\star}$. Then $\Gamma$ has only one edge whose degree
    $(0,\frac{\delta}{r})$, and has only one vertex over $0$, which is incident
    to the edge. The vertex over $0$ can be stable or unstable. If the vertex
    over $0$ is unstable, it must be a marked point with input $\mu_{\beta,p}$,
    then the graph $\Gamma$ contributes
    $$\frac{\mu_{\beta,p}(\frac{\lambda-D_{\theta}}{\delta})}{\delta}\cdot(\frac{\lambda-D_{\theta}}{\delta})^{c}$$
    to \eqref{eq:mainintegral2}. If the vertex over $0$ is stable, then this
    type of graphs contributes
    \begin{equation*}
      \begin{split}
        &\sum_{m=0}^{\infty}\sum_{\substack{\beta_{0}+\cdots+\beta_{m}=\beta\\p_{1}+\cdots+p_{m}=p}}\frac{1}{m!}(\widetilde{ev_{0}})_{*}\bigg(\sum_{d=0}^{\infty}\epsilon'_{*}\big(c_{d}(-R^{\bullet}\pi_{*}\mathcal L_{-\theta}^{\frac{1}{s}})(\frac{\lambda}{s})^{-d}\\
        &\cap [\mathcal
        K_{0,\vec{m}_{s}\cup\{0\}}(\sqrt[s]{L_{-\theta}/Y},\beta_{0})]^{\mathrm{vir}}\big)\cap \bigcap_{i=1}^{m}ev_{i}^{*}(\mu_{\beta_{i},p_{i}}(-\bar{\psi}_{i}))\cap
        \frac{\frac{1}{\delta}(\frac{\lambda-ev^*_0D_{\theta}}{\delta})^{c}}{\frac{\lambda-ev^*_0D_{\theta}}{s\delta}-\frac{\bar{\psi}_{0}}{s}}\bigg)
      \end{split}
    \end{equation*} 
    to \eqref{eq:mainintegral2}, where
    $\vec{m}_{s}\cup\{0\}=((g^{-1}_{\beta_{1}},\mu_{s}^{\beta_{1}(L_{\theta
      })+p_{1}}),\cdots, (g^{-1}_{\beta_{m}},\mu_{s}^{\beta_{m}(L_{\theta
      })+p_{m}}))\cup \{(g_{\beta},\mu_{s}^{-\beta(L_{\theta})-p})\}$, $\vec{m}\cup\{0\}=(g^{-1}_{\beta_{1}},\cdots,g^{-1}_{\beta_{m}})\cup\{g_{\beta}\}$ and  $\epsilon':\mathcal K_{0,\vec{m}_{s}\cup\{0\}}(\sqrt[s]{L_{-\theta}/Y},\beta_{0})\rightarrow \mathcal
    K_{0,\vec{m}\cup \{0\}}(Y,\beta_{0})$ is the natural structure morphism (c.f.\cite{tang16_quant_leray_hirsc_theor_banded_gerbes}). By Lemma
    \ref{lem:simp-fbeta} proved below, the above formula is equal to
    \begin{equation*}
      \sum_{m=0}^{\infty}\sum_{\substack{\beta_{0}+\cdots+\beta_{m}=\beta\\p_1+\cdots+p_m=p}}\frac{1}{m!}\phi^{\alpha}\langle{\mu_{\beta_{1},p_{1}}(-\bar{\psi}_{1}),\cdots,\mu_{\beta_{m}.p_{m}}(-\bar{\psi}_{m}),\frac{\frac{1}{\delta}(\frac{\lambda-D_{\theta}}{\delta})^{c}\phi_{\alpha}}{\frac{\lambda-D_{\theta}}{\delta}-\bar{\psi}_{0}}}
      \rangle_{0,[m]\cup \{0\},\beta_{0}}\ .
    \end{equation*}
    
    In summary, the localization contribution from the decorated graphs of which
    the vertex over $\infty$ is unstable contributes
    \begin{equation}\label{eq:red2*}
      \begin{split}
        &\mu_{\beta,p}(\frac{\lambda-D_{\theta}}{\delta})\cdot(\frac{\lambda-D_{\theta}}{\delta})^{c}\\
        &+\sum_{m=0}^{\infty}\sum_{\substack{\beta_{0}+\beta_{1}+\cdots+\beta_{m}=\beta\\p_{1}+\cdots+p_{m}=p}}\frac{1}{m!}\phi^{\alpha}\langle{\mu_{\beta_{1},p_{1}}(-\bar{\psi}_{1}),\cdots,\mu_{\beta_{m},p_{m}}(-\bar{\psi}_{m}),\frac{\frac{1}{\delta}(\frac{\lambda-D_{\theta}}{\delta})^{c}\phi_{\alpha}}{\frac{\lambda-D_{\theta}}{\delta}-\bar{\psi}_{0}}}\rangle_{0,[m]\cup
          \{0\},\beta_{0}}
      \end{split}
    \end{equation}to the \eqref{eq:mainintegral2}. Here we use the fact
    $R^{1}\pi_{*}(f^{*}\mathcal L^{\vee}_{\infty})$ is of rank 0 over
    $F_{\Gamma}$, so its Euler class is 1.
  \item If the only vertex $v$ over $\infty$ in $\Gamma$ is stable, then $v$ is incident to
    only one leg (corresponding to the marking $q_{\star}$) and possible several edges (can be none). Let's assume
    that $v$ has degree $(\beta_{\star},\frac{\delta_{\star}}{r})$ with
    $\delta_{\star}=\beta_{\star}(L_{\theta})$. If there is no edges in the
    graph $\Gamma$, which happens if and only if $\beta_{\star}=\beta$ and
    $p=0$. Then this has contribution:
    \begin{equation}\label{eq:sub3*}
      (\widetilde{ev_{\star}})_{*}\bigg(\sum
      _{d=0}^{\infty}\epsilon_{*}\big(c_{d}(-R^{\bullet}\pi_{*}(f^{*}\mathcal
      L_{\theta}^{\frac{1}{r}}))(\frac{-\lambda}{r})^{-1-d} \cap [\mathcal K_{0,\star}(\sqrt[r]{L_{\theta}/Y},\beta)]^{\mathrm{vir}}\big)\cap \bar{\psi}^{c}_{\star}\bigg)
    \end{equation}
    to \eqref{eq:mainintegral2}. Otherwise, there are $m$ ($m\geq 1$) edges
    attached to the vertex $v$, let's index them by $[m]:=\{1,\cdots,m\}$, with
    degree $(0,\frac{\delta_{i}}{r})$ on the $i$th edge $e_{i}$ for
    $\delta_{i}\in \Z_{>0}$. On each edge $e_{i}$ there is exactly one vertex $v_{i}$
    over $0$ incident to it, which can't be a unstable vertex of valence 1 (see
    Remark \ref{rmk:edge3*}) or a node linking two edges by Lemma
    \ref{lem:vanish2}. So $v_{i}$ is either a marking or a stable vertex with
    only one node incident to the edge $e_{i}$ and possible $l$ marked points
    ($l$ can be zero) on it, let's label the legs incident to $v_{i}$ by
    $\{i1,\cdots,il\}\subset [n]$ ($n$ is the total number of legs on $\Gamma$). 

    Assume that the vertex $v_{i}$ has degree $\beta_{i0}$. Since the
    insertion at the marking $q_{ij}$ on the curve $C_{v_{i}}$ corresponding to
    $v_{i}$ is of the form
    $\mu_{\beta_{ij},p_{ij}}(-\bar{\psi}_{ij})$ in \eqref{eq:mainintegral2}, let's say
    the leg for $q_{ij}$ has \emph{virtual degree} $(\beta_{ij},p_{ij})$ contribution to the vertex $v_{i}$, denote
    $\beta_{i}$ to be summation of $\beta_{i0}$ and all the virtual degrees from
    the markings on $C_{v_{i}}$, similarly for $p_{i}$. We call $(\beta_{i},p_{i})$ the \emph{total degree} at the vertex
    $v_{i}$. From the \eqref{eq:mainintegral2}, one has
    $$\beta_{\star}+\beta_{1}+\cdots+\beta_{m}=\beta,\; \;p_1+\cdots+p_m=p\ .$$

    Note to ensure such a graph $\Gamma$ exists, one must have
    \begin{equation}\label{eq:adm-graph}
     \beta_{i}(L_{\theta })+p_{i}=\delta_{i}\ .
    \end{equation}
    Indeed, by Riemann-Roch Theorem, one has
    $$deg(N_{1}|_{C_{v_{i}}})=-\frac{\beta_{i0}(L_{\theta})}{s}=
    (1-\frac{\delta_{i}}{s})+\sum_{j=1}^{l}\frac{\beta_{ij}(L_{\theta})+p_{ij}}{s}\quad
    mod \quad \Z\ .$$ Here the first term on the left hand is the age of $N_{1}$
    at the node of $C_{v_{i}}$, and the second term on the
    right is the sum of the ages of $N_{1}$ at the marked points on $C_{v_{i}}$. As $s$ is sufficiently large, one must have
    $$\frac{\delta_{i}}{s}=\frac{\beta_{i0}(L_{\theta})}{s}+\sum_{j=1}^{l}\frac{\beta_{ij}(L_{\theta})+p_{ij}}{s}\ ,$$
    which implies that $\beta_{i}(L_{\theta})+p_{i}=\delta_{i}$.

    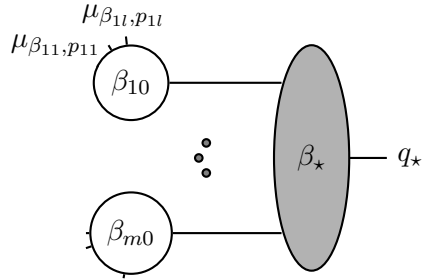
\begin{figure}[htbp]
      \centering
      \begin{tikzpicture}[baseline=3pt,->,>=,node distance=1.3cm,thick,main
        node/.style={circle,draw,font=\Large,scale=0.2}]
        \draw[fill=gray!60!white] (0,0) ellipse (0.5 and 1.5); \node at (0,0)
        (0){$\beta_\star$}; \node at (-2.4,1) [circle, draw] (01)
        {$\beta_{10}$};\node at (1,0)[right] (star){$q_{\star}$}; \node at (-2.4,-1) [circle,draw] (0k){$\beta_{m0}$};
        \node at (-3.4,1.5) (1){\textup{$\mu_{\beta_{11},p_{11}}$}}; \node at
        (-2.5,1.9) (l){\textup{$\mu_{\beta_{1l},p_{1l}}$}}; \draw [-] (-0.4,1) to (01);
        \draw[fill=gray!100!white] (-1.4,0.2) circle (0.05cm);
        \draw[fill=gray!100!white] (-1.5,0) circle (0.05cm);
        \draw[fill=gray!100!white] (-1.4,-0.2) circle (0.05cm); \draw [-]
        (-2.7,1.5) to (01); \draw [-] (-0.4,-1) to (0k); \draw [-] (-3,-1) to
        (0k); \draw [-] (-3,-1.2) to (0k); \draw [-] (-2.5,-1.6) to (0k); \draw
        [-] (1) to (01); \draw [-] (l) to (01);\draw [-] (1,0) to (0.5,0);
      \end{tikzpicture}
      \caption{The ellipse dubbed gray on the right means the vertex labeled by
        $\infty$ with a leg attached, and the
        two big circles on the left mean vertexes labeled by $0$. The text inside the vertex means the
        decorated degree for this vertex. On the upper left vertex, texts near the legs mean the
        insertion terms. The three grey dots in the middle mean the other edges
        (together with its incident vertexes and legs on them)
        besides edges indexed by $1$ and $m$.}
      \label{fig:typeB}
    \end{figure}

    We call a decorated graph $\Gamma$ admissible if $\Gamma$ has only one
    stable vertex $v$ over
    $\infty$ of degree $\beta_{\star}$ and $m$ ($m\geq 1$) edges \emph{labeled
      by $[m]$} incident to
    $v$ such that the degree for each vertex over $0$ satisfies
    \eqref{eq:adm-graph}. Note our definition of the admissible decorated graph has
    more decorations than the decorated graph introduced in Section
    \ref{sec:masterspace-inf} as we also label the edges. Then the automorphism
    group of an admissible decorated
    graph $\Gamma$ is identity, which is usually smaller than the automorphism group of the 
    corresponding decorated graph without labeling the edges. If we want to use
    admissible decorated graphs to compute the localization contribution, we need to divide $m!$ to
    offset the labeling as shown below.
    
    Now we can group the admissible decorated graphs by the triple
    $$(m, \beta_{\star},((\beta_{1},p_{1}),\cdots,(\beta_{m},p_{m})))\ .$$
    Denote by $\Lambda_{(m, \beta_{\star},((\beta_{1},p_{1}),\cdots,(\beta_{m},p_{m}))}$ the
    collection of all the admissible decorated graphs such that the vertex incident to the edge labeled by $i$ has total degree
    $(\beta_{i},p_{i})$.
        
    Now using the locaization formula in \S\ref{subsec:total3} to compute
    the contribution from $\Lambda_{(m,
      \beta_{\star},((\beta_{1},p_{1}),\cdots,(\beta_{m},p_{m}))}$ to \eqref{eq:mainintegral2}.
    For any admissible decorated graph $\Gamma$ in $\Lambda_{(m,
      \beta_{\star},((\beta_{1},p_{1}),\cdots,(\beta_{m},p_{m}))}$, they all have the same
    localization contribution for vertex and nodes over $\infty$
    and edges, as well as the same automorphism factor which comes from the gerbe
    structures of the edge moduli. Then the contribution from the vertex $v_{i}$ together with node at
    $v_{i}$ has localization contribution (after pushing forward to
    $\bar{I}_{\mu}Y$ along $\iota\circ (ev_{h_{i}})_{*}$, where $h_{i}$ is the
    node on $v_{i}$ incident to the edge $e_{i}$):
    \begin{equation*}
      \begin{split}
        &\mu_{\beta_{i},p_{i}}(\frac{\lambda-D_{\theta}}{\delta_{i}})+
        \sum_{l=0}^{\infty}\sum_{\substack{\beta_{0}+\beta_{1}+\cdots+\beta_{l}=\beta_{i}\\p_{1}+\cdots+p_{l}=p_{i}}}\frac{1}{l!}(\widetilde{ev_{0}})_{*}\bigg(\sum_{d=0}^{\infty}\epsilon'_{*}\big(c_{d}(-R^{\bullet}\pi_{*}\mathcal L_{-\theta}^{\frac{1}{s}})(\frac{\lambda}{s})^{-d}\\
        &\cap [\mathcal K_{0,\vec{l}\cup\{0\}}(\sqrt[s]{L_{-\theta}/Y},\beta_{0})
        ]^{\mathrm{vir}}\big)\cap \bigcap_{j=1}^{l}ev_{j}^{*}(\mu_{(\beta_{j},p_{j})}(-\bar{\psi}_{j}))\cap
        \frac{1}{\frac{\lambda-ev^*_0D_{\theta}}{\delta_{j}s}-\frac{\bar{\psi}_{0}}{s}}\bigg),
      \end{split}
    \end{equation*}
    which, by Lemma \ref{lem:simp-fbeta} below, is equal to
    $f_{\beta_{i},p_{i}}(z)|_{\frac{\lambda-D_{\theta}}{\delta_{i}}}$ by our
    definition of $f_{\beta_{i},p_{i}}(z)$. Put contributions from vertexes, edges
    together and nodes together, it yields
    \begin{equation*}
      \begin{split}
        &\frac{1}{m!}(\widetilde{ev_{\star}})_{*}\bigg(\sum
        _{d=0}^{\infty}\epsilon_{*}\big(c_{d}(-R^{\bullet}\pi_{*}\mathcal
        L_{\theta}^{\frac{1}{r}})(\frac{-\lambda}{r})^{-1+m-d} \cap [\mathcal K_{0,\vec{m}_{r}\cup\star}(\sqrt[r]{L_{\theta}/Y},\beta_{\star})]^{\mathrm{vir}}\big)\\
        &\cap
        \prod_{i=1}^{m}\frac{ev_{i}^{*}(\frac{1}{\delta_{i}}(f_{\beta_{i},p_{i}}(z)|_{z=\frac{\lambda-D_{\theta}}{\delta_{i}}})}{-\frac{\lambda-ev^{*}_{i}D_{\theta
            }}{r\delta_{i}}-\frac{\bar{\psi_{i}}}{r}}\cap
        \bar{\psi}_{\star}^{c}\bigg)\ .
      \end{split}
    \end{equation*}
    Now go over all possible triples $(m,
    \beta_{\star},((\beta_{1},p_{1}),\cdots,(\beta_{m},p_{m})))$, it yields the summation:
    \begin{equation*}
      \begin{split}
        \sum_{m=1}^{\infty}\sum_{\substack{\beta_{\star}+\beta_{1}+\cdots+\beta_{m}=\beta\\p_{1}+\cdots+p_{m}=p}}&\frac{1}{m!}(\widetilde{ev_{\star}})_{*}\bigg(\sum
        _{d=0}^{\infty}\epsilon_{*}\big(c_{d}(-R^{\bullet}\pi_{*}\mathcal
        L_{\theta}^{\frac{1}{r}})(\frac{-\lambda}{r})^{-1+m-d} \cap [\mathcal
        K_{0,\vec{m}_{r}\;\cup \star}(\sqrt[r]{L_{\theta}/Y},\beta_{\star})]^{\mathrm{vir}}\big)\\
        &\cap
        \prod_{i=1}^{m}\frac{ev_{i}^{*}(\frac{1}{\delta_{i}}(f_{\beta_{i},p_{i}}(z)|_{z=\frac{\lambda-D_{\theta}}{\delta_{i}}})}{-\frac{\lambda-ev^{*}_{i}D_{\theta
            }}{r\delta_{i}}-\frac{\bar{\psi_{i}}}{r}}\cap
        \bar{\psi}_{\star}^{c}\bigg)\ .
      \end{split}
    \end{equation*}
  \end{enumerate}
  By the discussion above, we can write \eqref{eq:mainintegral2} as the
  following:
  \begin{equation}\label{eq:rec2*}
    \begin{split}
      &\frac{\mu_{\beta}(\frac{\lambda-D_{\theta}}{\delta})}{\delta}\cdot(\frac{\lambda-D_{\theta}}{\delta})^{c}
      +\sum_{m=0}^{\infty}\sum_{\substack{\beta_{0}+\beta_{1}+\cdots+\beta_{m}=\beta\\p_{1}+\cdots+p_{m}=p}}\frac{1}{m!}(\widetilde{ev_{0}})_{*}\\
      &\bigg([\mathcal K_{0,[m]\cup\{0\}}(Y,\beta_{0})]^{\mathrm{vir}}\cap
      \bigcap_{i=1}^{m}ev_{i}^{*}(\mu_{\beta_{i},p_{i}}(-\bar{\psi}_{i}))\bigcap
      \frac{\frac{1}{\delta_{0}}(\frac{\lambda-ev^*_0D_{\theta}}{\delta_{0}})^{c}}{\frac{\lambda-ev^*_0D_{\theta}}{\delta_{0}}-\bar{\psi}_{0}}\bigg)\\ 
      &-\sum_{m=0}^{\infty}\sum_{\substack{\beta_{\star}+\beta_{1}+\cdots+\beta_{m}\\p_{1}+\cdots+p_{m}=p}}\frac{1}{m!}(\widetilde{ev_{\star}})_{*}\bigg(\sum
      _{d=0}^{\infty}\epsilon_{*}(c_{d}(-R^{\bullet}\pi_{*}\mathcal
      L_{\theta}^{\frac{1}{r}})(\frac{\lambda}{r})^{-1+m-d}(-1)^{d}\\
      &\cap [\mathcal
      K_{0,\vec{m}_{r}\cup\star}(\sqrt[r]{L_{\theta}/Y},\beta_{\star})]^{\mathrm{vir}})\cap
      \prod_{i=1}^{m}\frac{ev_{i}^{*}(\frac{1}{\delta_{i}}(f_{\beta_{i},p_{i}}(z)|_{z=\frac{\lambda-D_{\theta}}{\delta_{i}}})}{\frac{\lambda-ev^{*}_{i}D_{\theta
          }}{r\delta_{i}}+\frac{\bar{\psi_{i}}}{r}}\cap
      \bar{\psi}_{\star}^{c}\bigg).
    \end{split}
  \end{equation}

  As \eqref{eq:mainintegral2} lies in $H^{*}(\bar{I}_{\mu}Y, \mathbb Q)[\lambda]$,
  the coefficient of $\lambda^{-1}$ term in \eqref{eq:rec2*} must vanish. Note
  that the coefficients before $\lambda^{-1}$ in the first two terms in
  \eqref{eq:rec2*} yields (after replacing the index $0$ by $\star$)
  \begin{equation*}
    \sum_{m=0}^{\infty}\sum_{\substack{\beta_{\star}+\beta_{1}+\cdots+\beta_{m}=\beta\\p_1+\cdots+p_m=p}}\frac{1}{m!}\phi^{\alpha}\langle{\mu_{\beta_{1},p_{1}}(-\bar{\psi}_{1}),\cdots,\mu_{\beta_{m},p_{m}}(-\bar{\psi}_{m}),\phi_{\alpha}\bar{\psi}^{c}_{\star}}
    \rangle_{0,[m]\cup \star,\beta_{\star}},
  \end{equation*} 
  which is the left hand side of equality in \eqref{eq:rec2}. Then we extract the
  coefficient of the $\lambda^{-1}$ term in the third term in \eqref{eq:rec2*}, note
  $f_{\beta_{i},p_{i}}=0$ when $(\beta_{i},p_{i})=0$, this yields the term on the right hand side of
  \eqref{eq:rec2} up to a minus sign. This completes the proof of
  \eqref{eq:rec2}.
\end{proof}
\begin{lemma}\label{lem:simp-fbeta}
  when $s$ is sufficiently large, one has
  \begin{equation}\label{eq:simp-fbeta}
    \begin{split}
      &(\widetilde
      {ev_{0}})_{*}\bigg(\epsilon'_{*}\big(\sum_{d=0}^{\infty}c_{d}(-R^{\bullet}\pi_{*}\mathcal
      L_{-\theta}^{\frac{1}{s}})(\frac{\lambda}{s})^{-d} \\ &\cap [\mathcal K_{0,\vec{m}_{s}\cup\{0\}}(\sqrt[s]{L_{-\theta}/Y},\beta_{0})]^{\mathrm{vir}}\big)\cap \gamma
      \cap \bigcap_{i=1}^{m}ev_{i}^{*}(\mu_{\beta_{i},p_{i}}(-\bar{\psi}_{i}))\bigg)\\
      &=\frac{1}{s}(\widetilde{ev_{0}})_{*}\big([\mathcal
      K_{0,[m]\cup\{0\}}(Y,\beta_{0})]^{\mathrm{vir}}\cap\gamma \cap\bigcap_{i=1}^{m}ev_{i}^{*}(\mu_{\beta_{i},p_{i}}(-\bar{\psi}_{i}))\big)\
      ,
    \end{split}
  \end{equation} 
  Here $\epsilon': \mathcal K_{0,\vec{m}_{s}\cup \{0\}}(\sqrt[s]{L_{-\theta
    }/Y},\beta_{0})\rightarrow \mathcal K_{0,\vec{m}\cup\{0\}}(Y,\beta_{0})$ is
  the natural structure map, where $\vec{m}_{s}\cup \{0\}:=((g^{-1}_{\beta_{i}},\mu_{s}^{\beta_{i}(L_{\theta})+p_{i}}):1\leq
  i\leq m)\cup \{(g_{\beta},\mu_{s}^{-\beta(L_{\theta})-p})\}$, $\vec{m}\cup\{0\}:=(g^{-1}_{\beta_{i}}:1\leq
  i\leq m)\cup \{g_{\beta}\}$, $\beta=\sum_{i=0}^{m}\beta_{i}$,
  $p=\sum_{i=1}^{m}p_{i}$ and $\gamma$ is any cycle in $A_{*}(\mathcal K_{0,\vec{m}\cup \{0\}}(\sqrt[s]{L_{-\theta}/Y},
\beta))_{\mathbb Q}$.
\end{lemma}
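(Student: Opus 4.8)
The plan is to prove Lemma \ref{lem:simp-fbeta} by reducing both sides to an integral/pushforward computation over the moduli space $\mathcal K_{0,\vec{m}\cup\{0\}}(Y,\beta_{0})$ of twisted stable maps to $Y$ itself, using the banded-gerbe structure of $\sqrt[s]{L_{-\theta}/Y}$ over $Y$. First I would recall from~\cite{tang16_quant_leray_hirsc_theor_banded_gerbes, Andreini_2015} that the natural structure map $\epsilon'$ (forgetting the $s$-th root line bundle) realizes $\mathcal K_{0,\vec{m}_{s}\cup\{0\}}(\sqrt[s]{L_{-\theta}/Y},\beta_{0})$ as a $\boldsymbol{\mu}_{s}$-banded gerbe over (a union of components of) $\mathcal K_{0,\vec{m}\cup\{0\}}(Y,\beta_{0})$, and that $\epsilon'$ is compatible with the virtual classes, i.e.\ $\epsilon'_{*}[\mathcal K_{0,\vec{m}_{s}\cup\{0\}}(\sqrt[s]{L_{-\theta}/Y},\beta_{0})]^{\mathrm{vir}} = \tfrac{1}{s}[\mathcal K_{0,\vec{m}\cup\{0\}}(Y,\beta_{0})]^{\mathrm{vir}}$ up to the order-of-band normalization (which accounts for the $\frac1s$ on the right-hand side). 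The evaluation maps $ev_{i}$ and the $\bar\psi$-classes on the source are pulled back from the target $\mathcal K_{0,\vec m\cup\{0\}}(Y,\beta_0)$ under $\epsilon'$, so the insertions $\bigcap_{i}ev_{i}^{*}(\mu_{\beta_{i},p_{i}}(-\bar\psi_{i}))$ and the class $\gamma$ pass through the projection formula.

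The only genuine point is the term $\sum_{d\ge 0}c_{d}(-R^{\bullet}\pi_{*}\mathcal L_{-\theta}^{\frac1s})(\lambda/s)^{-d}$. Here I would invoke Remark \ref{rmk:vanishH0}: when $\beta_{0}\neq 0$ and $s$ is sufficiently large, $R^{0}\pi_{*}\mathcal L_{-\theta}^{\frac1s}=0$ by Lemma \ref{lem:vanishH0} (negative degree on $C_0$, non-positive on each component), so $-R^{\bullet}\pi_{*}\mathcal L_{-\theta}^{\frac1s}=R^{1}\pi_{*}\mathcal L_{-\theta}^{\frac1s}$ is an honest vector bundle of rank $|E(v)|-1$; but in the present configuration the vertex over $0$ carries exactly one node (one incident edge), so $|E(v)|=1$ and the bundle has rank $0$. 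Hence $\sum_{d\ge 0}c_{d}(-R^{\bullet}\pi_{*}\mathcal L_{-\theta}^{\frac1s})(\lambda/s)^{-d}=c_{0}(\cdots)=1$ identically. When $\beta_{0}=0$ one checks directly that the vertex is constant and the same rank-zero conclusion holds, or the term is absent. Thus the $\lambda$-dependent factor on the left-hand side is $1$, and what remains is exactly $\epsilon'_{*}$ applied to $[\mathcal K_{0,\vec m_s\cup\{0\}}(\sqrt[s]{L_{-\theta}/Y},\beta_0)]^{\mathrm{vir}}\cap\gamma\cap\bigcap_i ev_i^{*}(\mu_{\beta_i,p_i}(-\bar\psi_i))$, which equals $\tfrac1s$ times the corresponding expression on $\mathcal K_{0,[m]\cup\{0\}}(Y,\beta_0)$ by the gerbe-pushforward identity above.

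The main steps, in order, are: (1) set up the banded $\boldsymbol{\mu}_{s}$-gerbe structure of $\epsilon'$ and record that $ev_{i}$, $\bar\psi_{i}$, and $\widetilde{ev_0}$ factor through it; (2) establish the virtual-class compatibility $\epsilon'_{*}[\,\cdot\,]^{\mathrm{vir}}=\tfrac1s[\,\cdot\,]^{\mathrm{vir}}$, citing~\cite{tang16_quant_leray_hirsc_theor_banded_gerbes, Andreini_2015}; (3) show the Chern-class/$\lambda$ factor collapses to $1$ via the rank-zero argument using Lemma \ref{lem:vanishH0} and $|E(v)|=1$; (4) combine via the projection formula. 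The step I expect to be the main obstacle is (2) together with tracking the precise power of $s$: one must be careful that the order of the band $\boldsymbol{r}_{\star}$ appearing in the definition $(\widetilde{ev_{0}})_{*}=\iota_{*}(\boldsymbol{r}_{\star}(ev_{0})_{*})$ is taken with respect to the gerbe structure of $\bar I_{\mu}Y$ and not that of $\bar I_{\mu}\sqrt[s]{L_{-\theta}/Y}$, so that the extra $\boldsymbol{\mu}_{s}$-banding contributes exactly one factor of $\tfrac1s$ (from degree of $\epsilon'$) and not more; once this bookkeeping is pinned down the identity follows formally.
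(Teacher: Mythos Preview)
Your proposal follows essentially the same strategy as the paper: show the Chern-class factor collapses to $1$ because $-R^{\bullet}\pi_{*}\mathcal L_{-\theta}^{1/s}$ is zero, then apply the gerbe pushforward identity $\epsilon'_{*}[\,\cdot\,]^{\mathrm{vir}}=\tfrac{1}{s}[\,\cdot\,]^{\mathrm{vir}}$ from \cite{tang16_quant_leray_hirsc_theor_banded_gerbes} together with the projection formula. The bookkeeping you flag in step (2) is handled exactly as you anticipate.

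There is, however, a genuine gap in your step (3) for the case $\beta_{0}=0$. When $\beta_{0}=0$ the line bundle $\mathcal L_{-\theta}^{1/s}$ has degree zero on the curve and on every component, so Lemma~\ref{lem:vanishH0} does not apply, and ``the vertex is constant and the same rank-zero conclusion holds'' is not an argument: a degree-zero line bundle on a rational curve can certainly have nonzero $H^{0}$. Nor does ``the term is absent'' cover it, since for $m\geq 2$ the moduli space $\mathcal K_{0,[m]\cup\{0\}}(Y,0)$ is nonempty. The paper supplies the missing idea: since $\mu_{0,0}=0$, one may assume every $(\beta_{i},p_{i})\neq 0$, so the age of $\mathcal L_{-\theta}^{1/s}$ at each marking $q_{i}$ equals $(\beta_{i}(L_{\theta})+p_{i})/s\neq 0$. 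A nonzero section of a line bundle that has degree zero on every irreducible component of a genus-zero twisted curve would be nowhere-vanishing and hence trivialize the bundle, contradicting this nontrivial orbifold structure at $q_{i}$. This forces $R^{0}\pi_{*}\mathcal L_{-\theta}^{1/s}=0$ in the $\beta_{0}=0$ case as well; with this step inserted, your proof coincides with the paper's.
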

\begin{proof}
  As $\mu_{\beta_{i},p_{i}}$ belongs to the twist sector
  $H^{*}(\bar{I}_{g^{-1}_{\beta_{i}}}Y,\mathbb Q)$, by Riemann-Roch theorem,
  it's equivalent to prove the equality by replacing the right hand side of
  \eqref{eq:simp-fbeta} by
   $$\frac{1}{s}(\widetilde{ev_{0}})_{*}\bigg([\mathcal K_{0,
        \vec{m}\cup\{0\}}(Y,\beta_{0})]^{\mathrm{vir}}\cap \gamma \cap \bigcap_{i=1}^{m}ev_{i}^{*}(\mu_{\beta_{i},p_{i}}(-\bar{\psi}_{i}))\bigg)\ .$$
  We will first show that $R^{0}\pi_{*}\mathcal L^{\frac{1}{s}}_{-\theta}=0$ on $\mathcal K_{0,\vec{m}\cup\{0\}}(\sqrt[s]{L_{-\theta}/Y},\beta_{0})$, which
  implies that
  $R^{1}\pi_{*}\mathcal
  L^{\frac{1}{s}}_{-\theta}=0$ as $R^{\bullet}\pi_{*}\mathcal
  L^{\frac{1}{s}}_{-\theta}$ has virtual rank $0$ when $s$ is sufficiently
  large. By Remark \ref{rmk:vanishH0}, when $\beta_{0}\neq 0$, we have
  $R^{0}\pi_{*}\mathcal L^{\frac{1}{s}}_{-\theta}=0$. So it remains to prove
  the case when $\beta_{0}=0$. Assume now that $\beta_{0}=0$, as the vertex $v$
  over $0$ is
  stable, there must be some marked points on $C_{v}$. Assume $q_{i}$ is one of the marked
  points with insertion $\mu_{\beta_{i},p_{i}}$. Without loss of generality, we can assume $(\beta_{i},p_{i})\neq 0$ for all
  $i$ as $\mu_{0}(z)=0$ by the very definition. Note we have
  $$age_{q_{i}}((\mathcal L^{\frac{1}{s}}_{-\theta})|_{C_{v}})=\frac{\beta_{i}(L_{\theta})+p_{i}}{s}\neq
  0\ ,$$ then the restricted line bundle $L^{\frac{1}{s}}_{-\theta}:=(\mathcal
  L^{\frac{1}{s}}_{-\theta})|_{C_{v}}$ can't have any nonzero section on
  $C_{v}$. Indeed the degree of the restriction of $L^{\frac{1}{s}}_{-\theta}$
  to every irreducible component is zero by Lemma \ref{lem:2.3} as the total
  degree $\beta_{0}$ is zero, then a nonzero section of $L^{\frac{1}{s}}_{-\theta}$ will
  trivialize the line bundle $L^{\frac{1}{s}}_{-\theta}$, this contradicts the
  fact that $L^{\frac{1}{s}}_{-\theta}$ has nontrivial stacky structure at
  $q_{i}$.
 
  Now as $-R^{\bullet}\pi_{*}\mathcal
  L^{\frac{1}{s}}_{-\theta}=R^{1}\pi_{*}\mathcal L^{\frac{1}{s}}_{-\theta}=0$,
  \eqref{eq:simp-fbeta} follows immediately from the identity
   $$\epsilon'_{*}([\mathcal K_{0,\vec{m}_{s}\cup \{0\}}(\sqrt[s]{L_{-\theta
     }}/Y,\beta_{0})]^{\mathrm{vir}})=\frac{1}{s}[\mathcal K_{0,\vec{m}\cup
     \{0\}}(Y,\beta_{0})]^{\mathrm{vir}}\ ,$$ which is proved in \cite[Theorem
   5.16]{tang16_quant_leray_hirsc_theor_banded_gerbes}.
 \end{proof}

\subsection{Proof of Main Theorem}
Using the notation in the introduction, now we prove the main theorem \ref{thm:main}:
\begin{proof}
According to the analysis in the introduction, it suffices to prove the
following:
\begin{equation}\label{eq:coewc3*}
\begin{split}
  &[z\mathbb I_{\beta,p}(z)]_{z^{-c-1}}=\\
  &\sum_{m=0}^{\infty}\sum_{\substack{\beta_{0}+\beta_{1}+\cdots+\beta_{m}=\beta\\p_{1}+\cdots+p_{m}=p}}\frac{1}{m!}\phi^{\alpha}\langle\mu_{\beta_{1},p_{1}}(-\bar{\psi}_{1}),\cdots,\mu_{\beta_{m},p_{m}}(-\bar{\psi}_{m}),\phi_{\alpha}\bar{\psi}_{\star}^{c}
  \rangle_{0,[m]\cup \star,\beta_{0}}\ ,
\end{split}
  \end{equation}
for any nonnegative integer $c$ and degree $(\beta,p)$. 
Let's assume that \eqref{eq:coewc3*} is proved for all degrees $(\beta',p')\in
\mathrm{Eff}(W,G,\theta)\!\times\!\mathbb N$ with
$\beta'(L_{\theta})+p'<\beta(L_{\theta})+p$. Then $f_{\beta_{i},p_{i}}(z)$ in
\eqref{eq:rec2} is equal to $z \mathbb I_{\beta_{i},p_{i}}(z)$ by induction (note one can
assume that $(\beta_{i},p_{i})\neq 0$). Indeed, first
notice that, in \eqref{eq:rec2}, $f_{\beta_{i},p_{i}}(z)$ appears only for the graph
$\Gamma$ which
has stable vertex over $\infty$ with degree $(\beta_{\star},\frac{\delta_{\star}}{r})$. When $\beta_{\star}$ is nonzero,
it immediately follows that $\beta_{i}(L_{\theta})<\beta(L_{\theta})$, hence $\beta_{i}(L_{\theta})+p_{i}<\beta(L_{\theta})+p$; 
otherwise, when $\beta_{\star}=0$, as the unique vertex $v$ over $\infty$ is a stable vertex, then there
are at least two edges in $\Gamma$, which implies that
$\beta_{i}(L_{\theta})+p_{i}<\beta(L_{\theta})+p$ as each vertex over $0$ has nonzero
total degree as it's equal to the degree $\delta(e)$ of the edge incident to the
vertex by \eqref{eq:adm-graph}.  
Then \eqref{eq:coewc3*} immediately follows from Theorem \ref{thm:rec1} and
\ref{thm:rec2}.   
\end{proof}

\section{An example}\label{sec:corti-example}
In this section, we will recover the quantum product computation by Corti for a
cubic hypersurface $Y$ which is cut off by the polynomial $x_{1}^{3}+x_{2}^{3}+x_{3}^{3}+x_{4}x_{1}$ in $\P(1,1,1,2)$. The following is the table for (small) quantum
product of $Y$ obtained by Corti (see~\cite{guest08_orbif_quant_d_modul_assoc}):

\renewcommand{\AA}{\tfrac16 q^{\tfrac {A^A}A} \mathbbm{1}_{\tfrac2{A_A}}}
\[
\begin{array}{c|ccccc}
\vphantom{\AA}
 & \ \ \mathbbm {1}\ \  & \ \ p\ \  &\ \  p^2\ \  & \ \ \mathbbm{1}_{\frac12}\ \ 
\\
\hline
\vphantom{\AA}
\mathbbm{1}  & \mathbbm{1} & p & p^2 & \mathbbm{1}_{\frac12}
 \\
\vphantom{\AA}
p  &  & p^2+12r^2+3r \mathbbm{1}_{\frac12}  & 12r^2p & r p
\\
\vphantom{\AA}
p^2  &  &  & 108r^4+36r^3\mathbbm{1}_{\frac12} & 12r^3
\\
\vphantom{\AA}
\mathbbm{1}_{\frac12}  &  &  &  & \tfrac13 p^2 - 3r\mathbbm{1}_{\frac12}
\\
\end{array}
 \]
Here $r=\frac{1}{2}q$ \footnote{In~\cite{guest08_orbif_quant_d_modul_assoc},
  they use $r=\frac{1}{2}q^{\frac{1}{2}}$, their $q^{\frac{1}{2}}$ corresponds
  our $q$ here.}, $p$ is the hyperplane class of $Y$ and $\mathbbm
1_{\frac{1}{2}}$ is the fundamental class of the unique nontrivial twisted
sector of $H^{*}(\bar{I}_{\mu}Y,\mathbb Q)$. Due to the discussion
in~\cite{guest08_orbif_quant_d_modul_assoc}, the usual twisted $I-$function of
$\P(1,1,1,2)$ only recovers the first two rows, and the rest two rows rely on Corti's
key calculation

\begin{equation}\label{eq:corti}(\mathbbm{1}_{\frac{1}{2}}\circ\mathbbm{1}_{\frac{1}{2}},\mathbbm{1}_{\frac{1}{2}} )= \frac{-3}{2}r\ .\end{equation}

In the following, we will recover Corti's key calculation using the $I-$function
by choosing a different GIT presentation of $\P(1,1,1,2)$.

Choose the matrix
\[
  \rho =
  \begin{pmatrix}
    1 & 1  & 1 & 2 & 0 \\
    0 & 0 & 0 & 1 & 1\\
  \end{pmatrix}
  \colon \mathbb Z^2 \to \mathbb Z^5\ ,
\]
which gives the action of $G:=\C^{*}_{t}\times \C^{*}_{z}$ on $W:=\mathbb C^{5}$ so that the GIT (stack)
quotient is still $\P(1,1,1,2)$ (with the choice of stability condition
$\theta=t^{2}z^{3}$, this also corresponds to the S-extended data
$S=\{\frac{1}{2}\}$ in the sense of~\cite{coates2019some, Coates_2015}). Consider the polynomial
$zx^{3}_{1}+zx_{2}^{3}+zx_{3}^{3}+x_{4}x_{1}$, then it cuts off a hypersurface in the
new GIT stack quotient $[W^{ss}(\theta)/G]$, which is isomorphic to $Y$. Note this hypersurface doesn't come from a semi-positive line
bundle on $[W/G]$ in the sense of \ref{def:pos-lin}, in this case $Y$ comes from
the line bundle $L_{t^{3}z}$ on $[W/G]$. 

The monoid $\mathrm{Eff}(W,G,\theta)$ is generated by
$\beta_{1},\beta_{2}\in Hom(\chi(G),\mathbb Q)$ such that
\[
  \begin{pmatrix}
    \beta_{1}(L_{t})&\beta_{1}(L_{z})\\
    \beta_{2}(L_{t})&\beta_{2}(L_{z})\\
  \end{pmatrix}
  =
  \begin{pmatrix}
    \frac{1}{2}&0\\
    -\frac{1}{2}&1\\
  \end{pmatrix}\ .
\]
Then we can think $q:=q^{\beta_{1}}$ generates the semigroup of degrees of \emph{stable maps} to the hypersurface $Y$ and $x:=q^{\beta_{2}}$ is a formal variable.

By \S \ref{subsec:special-case}, the small $I-$function of
$Y$ using this new GIT presentation of $\P(1,1,1,2)$ is 

\begin{equation}
\begin{split}
I(q,x,z) = &\sum_{\substack{(l,k) \in \mathbb{N}^2\\\frac{3l-k}{2}\geq 0}} \frac{q^l x^{k}} {z^{k} k!} \frac{\prod_{i<0}(p+(\frac{l-k}{2}-i)z)^{3}}{\prod_{i<\frac{l-k}{2}}(p+(\frac{l-k}{2}-i)z)^{3}}\\
  &\frac{1}{ \prod_{0\leq i<l} (2p+(l-i)z) } \prod_{0\leq i <\frac{3l-k}{2} }\bigg(3p+(\frac{3l-k}{2}-i)z\bigg) \mathbbm 1_{\frac{k-l}{2}}\\
&+\sum_{\substack{(l,k) \in \mathbb{N}^2\\\frac{3l-k}{2}\in \Z_{<0}}} \frac{q^l x^{k}} {z^{k} k!} \frac{\prod_{\frac{l-k}{2}<i<0}(p+(\frac{l-k}{2}-i)z)^{3}}{\prod_{0\leq i<l}(2p+(l-i)z)}\\
  &\frac{1}{\prod_{\frac{3l-k}{2}<i<0}\bigg(3p+(\frac{3l-k}{2}-i)z\bigg)}\frac{1}{3}p^{2}\\
&+\sum_{\substack{(l,k) \in \mathbb{N}^2\\\frac{3l-k}{2}\in Q_{<0}\backslash\Z_{<0}}} \frac{q^l x^{k}} {z^{k} k!} \frac{\prod_{\frac{l-k}{2}<i<0}(p+(\frac{l-k}{2}-i)z)^{3}}{\prod_{0\leq i<l}(2p+(l-i)z)}\\
  &\frac{1}{\prod_{\frac{3l-k}{2}<i<0}\bigg(3p+(\frac{3l-k}{2}-i)z\bigg)}\mathbbm 1_{\frac{k-l}{2}}\ .
\end{split}
\end{equation}
where $\mathbbm 1_{\frac{k-l}{2}}=\mathbbm 1_{\frac{1}{2}}$ if $k-l$ is odd,
otherwise $\mathbbm 1_{\frac{k-l}{2}}=\mathbbm 1$. We can show the following fact about $I(q,x,z)$:
\begin{equation}\label{eq:1.3}I(q,x,z)=\mathbbm{1}+\frac{x \mathbbm 1_{\frac{1}{2}}+qx \mathbbm 1}{z}+\mathcal O(x^{3})+\mathcal O(\frac{1}{z^{2}})\ ,\end{equation}
\begin{equation}\label{eq:1.4}\frac{\partial I(q,x,z)}{\partial x}=\frac{\mathbbm 1_{\frac{1}{2}}+q\mathbbm
  1}{z}+\frac{x(q^{2} \mathbbm 1+\frac{1}{3}p^{2}+\frac{q \mathbbm
    1_{\frac{1}{2}}}{2})}{z^{2}}+\mathcal O(x^{2})+\mathcal O(\frac{1}{z^{3}})\ ,\end{equation}
and
\begin{equation}\label{eq:1.4*}\frac{\partial^{2} I(q,x,z)}{\partial^{2} x}=
  \frac{q^{2} \mathbbm 1+\frac{1}{3}p^{2}+\frac{q \mathbbm
    1_{\frac{1}{2}}}{2}}{z^{2}}+\mathcal O(x)+\mathcal O(\frac{1}{z^{3}})\ .\end{equation}

Since $-ze^{\frac{qx}{z}}I(q,x,-z)$ is a slice on the
Givental's cone by string flow. Note we have the asympotic expansion
\begin{equation}\label{eq:1.5}ze^{\frac{-qx \mathbbm 1}{z}}I(q,x,z)=z \mathbbm 1+x \mathbbm 1_{\frac{1}{2}}+\mathcal
O(x^{3})+\mathcal O(\frac{1}{z})\ .\end{equation}
Then
\begin{equation}\label{eq:1.6}ze^{\frac{-qx \mathbbm 1}{z}}I(q,x,z)=J^{Giv}(q,x \mathbbm 1_{\frac{1}{2}},z)+\mathcal O
(x^{3})\ ,\end{equation}
where $J^{Giv}(q,t,z)$ is Givental's $J-$function which has an asymptotic
expansion
\begin{equation}z \mathbbm 1+t+\mathcal O(\frac{1}{z})\ ,\end{equation}
and $t=\sum t^{\alpha}\phi_{\alpha}\in H^{*}(\bar{I}_{\mu}Y, \mathbb
Q)$. We have the following standard fact about Givental's $J-$function (c.f.~\cite{Givental_2004}):
\begin{equation}\label{eq:1.8} z\frac{\partial }{\partial t_{\alpha}}\frac{\partial}{\partial
  t_{\beta}}J^{Giv}(q,t,z)=\phi_{\alpha}\star_{t}\phi_{\beta}+\mathcal
O(z^{-1})\ .\end{equation}

Now consider the function
\begin{equation}\label{eq:star}
z\frac{\partial^{2}}{\partial^{2}x}\bigg(ze^{\frac{-qx \mathbbm 1}{z}}I(q,x,z)\bigg)\ , 
\end{equation}
a direction computation using product rule yields:
\begin{equation}\label{eq:1.9}q^{2}e^{\frac{-qx \mathbbm 1}{z}}I(q,x,z)-2zqe^{\frac{-qx \mathbbm 1}{z}}\frac{\partial}{\partial
x}I(q,x,z)+z^{2}e^{\frac{-qx \mathbbm 1}{z}}\frac{\partial^{2}}{\partial^{2}x}I(q,x,z) \ .\end{equation}
Apply \eqref{eq:1.3}, \eqref{eq:1.4}, \eqref{eq:1.4*} to the first term, second
term and third term in \eqref{eq:1.9}, respectively, we have the following asymptotic expansion of \eqref{eq:star}:
\begin{equation}\label{eq:asy1}
q^{2}\mathbbm 1-2q(\mathbbm 1_{\frac{1}{2}}+q\mathbbm 1)+(q^{2} \mathbbm 1+\frac{1}{3}p^{2}+\frac{q \mathbbm
    1_{\frac{1}{2}}}{2})+ \mathcal O(x)+\mathcal O(z^{-1})\ .
\end{equation}
On the other hand, using equation \eqref{eq:1.6}, \eqref{eq:1.8}, one has another asymptotic
expansion about \eqref{eq:star}:
\begin{equation}\label{eq:asy2}
\mathbbm 1_{ \frac{1}{2}}\star_{x}\mathbbm 1_{\frac{1}{2}}+\mathcal O(x)+\mathcal O(z^{-1})\ .
\end{equation}
Compare \eqref{eq:asy1} and \eqref{eq:asy2}, after evaluating $x=0$ and ignoring
all negative $z$ powers, we have
$$\mathbbm 1_{ \frac{1}{2}}\circ \mathbbm
1_{\frac{1}{2}}=\frac{1}{3}p^{2}-\frac{3}{2}q \mathbbm 1_{\frac{1}{2}} \ ,$$
which recovers Corti's calculation\footnote{Here the small quantum product
  $\circ$ is defined by the specialization of the big quantum product
  $\star_{t}$ to $t=0$.} \eqref{eq:corti}!

\bibliographystyle{amsxport}
\bibliography{references}

\end{document}